\newcommand{\isovposets}{\catname{Isov}}
\newcommand{\Deltacat}{\catname{$\pmb \Delta$}}
\newcommand{\nreal}{\text{n}\mathbb{r}}
\newcommand{\real}{\mathbb{r}}
\title{Simplicial $C_2$-Isovariant Homotopy
}
\author{
	Santiago Toro Oquendo \\
	Université Breatgne Occidentale \\
	LMBA - Brest\\
	\href{mailto:santiago.torooquendo@univ-brest.fr}{santiago.torooquendo@univ-brest.fr} \\
}
\begin{document}
	
\maketitle

\begin{abstract}
	This article presents a novel approach to construct a model category structure designed to model the homotopy theory of spaces equipped with an action by the group $C_2$, where morphisms are considered to be isovariant. Our methodology centers on simplicial techniques. We replace the conventional simplex category $\Deltacat$ with a modified category $C_2\Deltacat$ and then delve into the study of presheaves of sets on $C_2\Deltacat$. To establish the model structure, we employ Cisinski's methodologies for model structures in categories of presheaves, in particular we use an analogous idea to the one employed by Cisinski and Moerdijk in the construction of a model category structure for Dendroidal Sets \cite{Cisinski2011}. This theory distinguishes itself from prior work, such as Yeakel's \cite{Yeakel2019IET}, which primarily focuses on a more topological context. Our approach brings a new perspective to the study of isovariant homotopy theory for $C_2$-spaces.
\end{abstract}

\keywords{Isovariant homotopy theory \and Model categories \and $C_2$-spaces}
	
\setcounter{tocdepth}{2}


	\section{Introduction}

Let $G$ be a finite group and consider $X$ and $Y$ to be compactly generated spaces endowed with continuous left actions by $G$. Recall that an equivariant map $f:X\lto Y$ is a continuous map that preserves the $G$-action in the sense that $f(g\cdot x)=g\cdot f(x)$. The map $f$ is said to be \emph{isovariant} if it preserves isotropy groups in a strict sense, so that, $G_x = G_{f(x)}$ for all $x\in X$. 

Perhaps one of the first works in which isovariant homotopy plays an influential role is in the works of Browder and Quinn \cite{Browder1973}. They used isovariant homotopy theory to provide an approach to surgery theory in $G$-manifolds and stratified sets. By using surgery methods, they managed to obtain isovariant versions of the $s$-cobordism theorem and a surgery exact sequence. These ideas were later on developed further by Schultz \cite{Schultz1992}. In particular they addresses natural questions about the computation of isovariant structures on manifolds and the natural applications of these (see also \cite{SchultzDovermann1990}). \\
Recently, Sarah Yeakel in \cite{Yeakel2019IET} has shown that the category of $G$-spaces with isovariant maps $G\Top_{\text{isov}}^{\lhd}$, in which a formal terminal object has been attached,  admits a Quillen model structure and this latter is moreover Quillen equivalent to a model category of diagrams, thus giving rise to an isovariant Elmendorf's theorem. Yeakel's main result reads as follows:  
\begin{theorem}[{\cite[\nopp 3.1]{Yeakel2019IET}}]
	Let $G$ be a finite group. Then there is a Quillen equivalence:
	\[\adjto{\Delta_G^{\bullet}\otimes_{\catcal{L}_G}-}{\Fun[\catcal{L}_G^{\op},\Top]}{G\Top_{\emph{isov}}^{\lhd}}{\Map_{\emph{isov}}(\Delta_G^{\bullet},-)}\]
\end{theorem}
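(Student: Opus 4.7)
The plan is to follow the blueprint of the classical Elmendorf theorem, adapted to accommodate the isovariance constraint. The first step is to set up the adjunction formally. The left adjoint is the coend
\[
\Delta_G^{\bullet}\otimes_{\catcal{L}_G}F \;=\; \int^{L\in\catcal{L}_G} F(L)\times \Delta_G^{L},
\]
and the right adjoint sends $X$ to the presheaf $L\mapsto \Map_{\text{isov}}(\Delta_G^{L},X)$. This is a standard enriched nerve--realisation adjunction, so formally there is nothing to verify here beyond the compatibility of the enrichment with the isovariance condition.

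Next I would equip $\Fun[\catcal{L}_G^{\op},\Top]$ with its projective model structure, whose generating (trivial) cofibrations have the form $\catcal{L}_G(-,L)\otimes i$ for $i$ a generating (trivial) cofibration of $\Top$. To check that the adjunction is a Quillen pair, I would verify that realisation carries these generators into the classes of (trivial) cofibrations for the isovariant model structure on $G\Top_{\text{isov}}^{\lhd}$. By Yoneda one has $\Delta_G^{\bullet}\otimes_{\catcal{L}_G}\catcal{L}_G(-,L)\cong \Delta_G^{L}$, so the question reduces to showing that each $\Delta_G^{L}$ is a generating cell in the target model structure. This is the central place where the specific design of $\catcal{L}_G$ and of $\Delta_G^{\bullet}$ pays off: the category and the cosimplicial $G$-space must be chosen precisely so that the family $\{\Delta_G^{L}\}_{L\in\catcal{L}_G}$ coincides with the generating cells of the isovariant model structure.

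To upgrade this to a Quillen equivalence, I would analyse the derived unit and counit. For the counit, a cellular argument reduces the problem to the case of representables $F=\catcal{L}_G(-,L)$, where the evaluation at $L'$ becomes $\Map_{\text{isov}}(\Delta_G^{L'},\Delta_G^{L})$; I would then use the defining property of $\catcal{L}_G$ -- that its hom-sets are designed to encode the isovariant mapping components between cells -- to identify this space with $\catcal{L}_G(L',L)$ up to weak equivalence. For the unit, I would argue by induction on the isovariant cell structure of a cofibrant $X\in G\Top_{\text{isov}}^{\lhd}$, starting from the base case just handled and propagating along cofibration pushouts, using that both functors in the adjunction preserve the relevant colimits.

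The main obstacle I expect is the combined handling of two subtle points. First, identifying $\Map_{\text{isov}}(\Delta_G^{L'},\Delta_G^{L})$ as a homotopically correct model for $\catcal{L}_G(L',L)$: this is the isovariant counterpart of the calculation that makes classical Elmendorf work, and it relies heavily on the specific construction of both sides. Second, controlling the role of the adjoined terminal object $(-)^{\lhd}$, which must be formally attached because isovariance forbids any natural collapse of strata; ensuring that both functors, as well as fibrant and cofibrant replacement, interact correctly with this absorbing point is the delicate part of the argument.
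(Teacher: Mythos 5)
The statement you have been asked to prove is not a result of the present paper: it is quoted verbatim, with citation, as \cite[Theorem~3.1]{Yeakel2019IET}, and the paper offers no proof of it. It appears in the introduction only as background, to motivate the authors' own (simplicial, $C_2$-specific) alternative to Yeakel's topological approach. There is therefore no internal proof against which your attempt can be compared; the comparison would have to be made against Yeakel's original argument, which lies outside this document.

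As a standalone sketch, your outline is the expected Elmendorf-style blueprint (nerve--realisation adjunction, projective model structure on the diagram side, reduction to representables via Yoneda, derived unit/counit analysis) and correctly isolates the two genuinely delicate points: identifying $\Map_{\text{isov}}(\Delta_G^{L'},\Delta_G^{L})$ with $\catcal{L}_G(L',L)$ up to weak equivalence, and controlling the formally adjoined terminal object $(-)^{\lhd}$. However, the sketch is not a proof: it presupposes without verification that the family $\{\Delta_G^{L}\}$ agrees with the generating cells of the isovariant model structure, that the latter model structure exists with the stated generators, and that the homotopical identification of mapping spaces goes through. Each of these is a substantive theorem in Yeakel's paper (indeed, establishing the model structure on $G\Top_{\text{isov}}^{\lhd}$ and the properties of $\catcal{L}_G$ occupy most of that paper), so what you have written is a plausible road map rather than an argument. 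If the intention was to reproduce Yeakel's proof, you would need to supply those ingredients; if the intention was to compare with the present paper, note that the present paper takes a different route altogether, replacing the topological category $\catcal{L}_G$ and diagram-category comparison by a simplicial presheaf category $\ssetisov$ over a modified simplex category $C_2\Deltacat$, with the model structure built via Cisinski's machinery rather than transferred from a functor category.
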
 

To illustrate one difference between isovariant homotopy theory and the more classical equivariant homotopy theory one may consider Example 2.5 in \cite{Yeakel2019IET}. In such example, one considers the unit disk $\mathbb{D}^2$ with action by the cyclic group of order $2$, denoted by $C_2$. This action is given by reflection with respect to the $y$-axis. The unit disk then admits many equivariant and isovariant maps from the one-point space $\ast$ endowed with trivial $C_2$-action. However there is no isovariant map from $\mathbb{D}^2$ into $\ast$. Let us fix some $f:\ast \to \mathbb{D}^2$. 

\begin{center}
	\begin{tikzpicture}[scale=.45]
			\node[blue] at (-5,0) {$\bullet$};
			\draw[->] (-4.5,0)--(-2.5,0);
			\node[above] at (-3.5,0) {equiv};
			\node[below] at (-3.5,0) {isov};
			\draw[<-] (-1,2.3) to [out=45,in=135] (1,2.3);
			\draw[->] (-1,2.3) to [out=45,in=135] (1,2.3);
			\filldraw[fill=black!20] (0,0) circle (2cm);
			\filldraw[blue] (0,2) circle (1.3pt);
			\filldraw[blue] (0,-2) circle (1.3pt);
			\node[blue] at (0,1.3) {$\bullet$};
			\draw[blue] (0,2)--(0,-2);
			\draw[->] (2.5,0)--(4.5,0);
			\node[above] at (3.5,0) {equiv};
			\node[below] at (3.5,0) {not isov};
			\node[blue] at (5,0) {$\bullet$};
		\end{tikzpicture}
\end{center}      
The classical equivariant homotopy theory sees the map $f$ as an equivariant weak equivalence since the induced maps on fixed points $f^{e}$ and $f^{C_2}$ are both weak equivalences of spaces. Nevertheless, $f$ is not a weak equivalence in $C_2\Top_{\text{isov}}^{\lhd}$ as shown in \cite[Example 2.5 ]{Yeakel2019IET}. 

In this article we propose a different approach to isovariant homotopy by using simplicial techniques. We consider only the case where $G=C_2$ and study presheaves of sets over an \enquote{isovariant simplex category} $G\Deltacat$. We aim to generalize these ideas to allow actions of any profinite group $G$ in a subsequent paper. 
The category $G\Deltacat$ is analog to the usual simplex category $\Deltacat$ and its objects may be thought of as building blocks for isovariant $C_2$-spaces. This building blocks can also be seen in the context of stratified homotopy theory \cite{douteau2020simplicial} where simplices are stratified by isotropy. 

By using techniques from \cite{Cisinski2016LPCMDTH} and analogue results from the construction of model category structures on Dendroidal Sets \cite{Cisinski2011} we show that the category $\ssetisov$ of presheaves of sets over the category $G\Deltacat$ admits a cofibrantly generated model category structure. Our main result is the following: 

\begin{theorem}[\Cref{Corollary:modelstructureabsolute}]
	The category $\ssetisov$ is endowed with a cofibrantly generated model category structure in which the cofibrations are the normal monomorphisms, the fibrant objects are the $\interval$-fibrant objects, and the fibrations between fibrant objects are the isovariant fibrations. Weak equivalences are given by the isovariant homotopy equivalences and in particular for any $X$ normal and $Y$ fibrant we have that 
	\[[X,Y]_{\emph{isov}} = \Hom_{\catname{Ho}(\ssetisov)}(X,Y).\]
\end{theorem}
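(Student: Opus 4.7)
The plan is to invoke Cisinski's general construction of model structures on presheaf categories from \cite{Cisinski2016LPCMDTH}, mirroring the strategy by which Cisinski and Moerdijk produced the operadic model structure on dendroidal sets. The input data for Cisinski's theorem consists of three pieces: a class of cofibrations described as the saturation of a small set (here the normal monomorphisms, arising from boundary inclusions of representables in $C_2\Deltacat$), a cylinder/interval functor $\interval$ furnishing a well-behaved homotopy relation, and a class of anodyne extensions containing the pushout products of the interval endpoints with cofibrations and closed under pushout product with cofibrations in general.

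First I would verify that $C_2\Deltacat$ is a skeletal (direct-type) category in the appropriate sense, so that boundary-inclusion maps between representable presheaves generate a cellular class, and in particular the normal monomorphisms assemble into a cofibrantly generated class admitting a small set of generators. This includes checking that automorphisms of objects in $C_2\Deltacat$ act freely enough on non-degenerate cells that the normalization functor behaves well, a subtlety specific to the equivariant setting. Next I would exhibit $\interval$ as a cylinder object compatible with the cofibrations and construct a small set of isovariant horn-type inclusions whose saturation defines the class of anodyne extensions; the compatibility of $\interval$ with this class then needs to be verified.

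With these data in hand, Cisinski's theorem produces the desired cofibrantly generated model structure in which fibrant objects are exactly the $\interval$-fibrant ones, cofibrations are the normal monomorphisms, and fibrations between fibrant objects are characterized by the right lifting property against the generating anodyne extensions, i.e.\ they are precisely the isovariant fibrations. The identification of the weak equivalences with isovariant homotopy equivalences then follows by comparing the $\interval$-homotopy relation to the isovariant homotopy relation defined earlier and invoking Ken Brown's lemma together with the explicit description of fibrant replacements. The final displayed equality is then a formal consequence of Quillen's theorem that, for $X$ cofibrant and $Y$ fibrant, morphisms in the homotopy category are represented by homotopy classes.

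The main obstacle I anticipate is verifying the anodyne extension axioms, specifically that the pushout product of any generating anodyne extension with any normal monomorphism remains anodyne. In the isovariant setting this is delicate because the fixed-point structure must be preserved throughout horn fillings, so one must identify the correct analogue of inner horns for $C_2\Deltacat$ and check by a careful combinatorial analysis, stratified by isotropy type, that this class is stable under the required operations. Once this combinatorial core is in place, the remainder of the proof is a largely formal application of Cisinski's machinery.
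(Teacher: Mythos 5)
There is a genuine gap in your plan. You propose to apply Cisinski's model-structure theorem directly to $\ssetisov$ with the normal monomorphisms as cofibrations. But Cisinski's theorem \cite[Th\'eor\`eme 1.3.22]{Cisinski2016LPCMDTH} produces a model structure whose cofibrations are \emph{all} monomorphisms of presheaves, not the normal monomorphisms, and in $\ssetisov$ these two classes genuinely differ: because objects of $G\Deltacat$ have non-trivial automorphisms (the swapping map $\sigma$), not every isovariant simplicial set is normal, so not every monomorphism $\emptyset \to X$ can be a cofibration if you insist on cofibrant = normal. You acknowledge in passing that the automorphism/normalization issue is ``a subtlety specific to the equivariant setting'' but treat it as a side check, when in fact it is the reason a direct application of Cisinski's theorem to $\ssetisov$ does not produce the stated model structure.

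The paper resolves this by an indirect two-step argument that your proposal does not contain. First, \Cref{Theorem: Model structure for ssisov} applies Cisinski's theorem to the over-category $\ssetisov \downarrow B$ for $B$ a \emph{normal} isovariant simplicial set: there, every object is automatically normal (\enumref{Paragraph:Properties_normal_monomorphisms}{2}), so monomorphisms and normal monomorphisms coincide and Cisinski's machinery applies without friction. Second, one takes a normalization $E^{\infty} \twoheadrightarrow \ast$ of the terminal object, proves that the adjunction $U^{\ast}\circ U_{!} \dashv U^{\ast}\circ U_{\ast}$ on $\ssetisov \downarrow E^{\infty}$ is a Quillen equivalence (\Cref{Prop:quillenequivalencetoitself}), and then \emph{right-transfers} the model structure along $U_{!} \dashv U^{\ast}$ to $\ssetisov$ using the Crans/Dwyer--Spalinski transfer theorem (\Cref{Corollary:modelstructureabsolute}). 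Without this detour through the normalized over-category, you cannot obtain the normal monomorphisms as cofibrations, and the combinatorial work you correctly identify (the admissible-horn/pushout-product analysis of \Cref{lemma: saturation of admissible horn inclusions is the same as saturation of cisinski's class}) is necessary but not sufficient to close the argument.
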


\noindent
\textbf{Organization of the paper.} In this paper we use freely the language of model categories and in particular we assume familiarity with Cisinski's model structures \cite{Cisinski2016LPCMDTH}. In \Cref{section:c2isovariantsimplexcategory} we construct the category $C_2\Deltacat$ which is an isovariant analog to the simplex category $\Deltacat$. In particular we prove that $C_2 \Deltacat$ is a skeletal category in the sense of Cisinski. In \Cref{section:isovariant simplicial sets} we study presheaves of sets over the category $C_2\Deltacat$ and we call this objects \emph{isovariant simplicial sets}. We derive some of its natural properties and a canonical singular/realization adjunction between the category of isovariant simplicial sets $\catname{sIsov}_{C_2}$ and the category of $C_2$-topological spaces together with isovariant maps $C_2\Top_{\text{isov}}$. We also construct a few natural examples analog to simplicial spheres and horns and an explicit exact cylinder functor for the category $\catname{sIsov}_{C_2}$. Finally in \Cref{Section: model structure for ssetisov} we construct a cofibrantly generated model category structure on the category of isovariant simplicial sets by using similar ideas to the construction of the model category structure on Dendroidal Sets \cite{Cisinski2011}.   

%

	\section{A $C_2$-Isovariant Simplex Category}\label{section:c2isovariantsimplexcategory}
Let $P = \set{x_0<x_1<\cdots <x_n}$ be a finite non-empty totally ordered poset and denote by $C_2=\set{e,\sigma}$ the cyclic group of order $2$. We will consider $C_2$ with the trivial order where $e$ and $\sigma$ are not comparable. Then the cartesian product $P\times G$ has a natural product order induced by that of $P$: 
	\begin{equation}\label{eq:diagram poset Pk}
		\begin{tikzcd}[column sep=small]
			{(x_0,e)} & {(x_1,e)} & \cdots & {(x_n,e)} \\
			{(x_0,\sigma)} & {(x_1,\sigma)} & \cdots & {(x_n,\sigma)}
			\arrow[from=1-1, to=1-2]
			\arrow[from=1-2, to=1-3]
			\arrow[from=1-3, to=1-4]
			\arrow[from=2-1, to=2-2]
			\arrow[from=2-2, to=2-3]
			\arrow[from=2-3, to=2-4]
		\end{tikzcd}
	\end{equation}
For any $0\leq k\leq n+1$, we define $P_k$ to be the quotient $P_k =\bigslant{(P \times C_2)}{\sim}_k$.  Where $\sim_k$ is the equivalence relation generated by $(x_i,e) \sim_k (x_j,\sigma)$ if and only if $i=j\geq k$.

\begin{remark}\label{Remark:ApproachPkfornk} \leavevmode
	\begin{enumerate}[label=\stlabel{Remark:ApproachPkfornk}, ref=\arabic*]
		\item \label{Remark:ApproachPkfornk.1} Notice that the relation $\sim_k$ does not depend on $n$ and thus we can of course allow $k$ to be greater than $n+1$. In this case it is easy to show that for a finite non-empty totally ordered poset $P$ we have that $P_k \cong P_{k+1}$ for all $k\geq n+1$. 
		
		\item \label{Remark:ApproachPkfornk.2}  Let $[n]=\set{0<1<2< \cdots<n}$. Then, for $k=0$ and $k=n+1$ we have isomorphisms of posets: 
		\[ P_0 \cong [n] \quad \text{and} \quad P_{n+1}\cong [n]\coprod[n]\] 
The group $C_2$ acts canonically on $P_k$ by defining:  
		\begin{align*} 
			C_2 \times P_k &\lto P_k  \\ (g, [x_i,h]) &\longmapsto [x_i,gh] , \quad 0\leq i \leq n, \quad g,h \in C_2, \quad x_i \in P.
		\end{align*}
		This action is trivial when $k=0$ and if $k=n+1$ then the group $C_2$ acts by swapping components on $P_{n+1}\cong [n] \coprod [n]$. Aditionally, notice that the isotropy groups of elements $[(x_i,g)] \in P_k$ have orders: 
		\[\left| {C_2}_{[(x_i,g)]} \right|= \begin{cases}
			1 &\text{ if } 0 \leq i \leq k-1 \\ 2 &\text{ if } k \leq i \leq n
		\end{cases}\]
		
		\item \label{Remark:ApproachPkfornk.3} More generally, for any $0\leq k\leq n+1$ we have that $P_k \cong [n]_k$. This isomorphism is moreover an isomorphism of $C_2$-finite posets where $P_k$ and $[n]_k$ are both endowed with canonical $C_2$-actions as in \enumref{Remark:ApproachPkfornk}{2}. The $C_2$-poset $[n]_k$ can be then pictured as: 
	
	\begin{figure}[H]
		\centering
		\begin{tikzpicture}[align=center, 
			minimum size=5mm,
			bend angle=45] 
			\tikzset{node distance = 0.02cm and 0.7cm}
			\node (0e) {$[0,e]$};
			\node (1e) [right =of 0e] {$[1,e]$};
			\node (2e) [right =of 1e] {$\ldots$}; 
			\node (3e) [right =of 2e] {$[n,e]$};
			
			\path[->] (0e) edge (1e);
			\path[->] (1e) edge (2e);
			\path[->] (2e) edge (3e);
		\end{tikzpicture}
	\captionsetup{labelformat=empty}
	\caption{$k=0$}\label{Fig:k=0}
	\end{figure}

	\begin{figure}[H]
		\begin{minipage}[c]{0.49\linewidth}
				\begin{tikzpicture}[align=center, 
					minimum size=5mm,
					bend angle=45] 
					\tikzset{node distance = 0.06cm and 0.3cm}
					\node (00) {$[0,e]$} ;
					\node (10) [below right =of 00] {$[1,e]$};
					\node (20) [below right =of 10, rotate=-20] {$\ldots$};  
					\node (30) [below right = of 20] {$[k,e]$}; 
					\node (21) [below left =of 30,rotate=20] {$\ldots$};
					\node (11) [below left = of 21] {$[1,\sigma]$};
					\node (01) [below left = of 11] {$[0,\sigma]$};
					\node (40) [right =of 30]{$\ldots$};
					\node (50) [right =of 40]{$[n,e]$};
					
					\path[->] (00) edge (10);
 					\path[->] (10) edge (20);
					\path[->] (20) edge (30);
					\path[->] (01) edge (11);
					\path[->] (11) edge (21);
					\path[->] (21) edge (30);
					
					\path[->] (30) edge (40);
					\path[->] (40) edge (50);
			\end{tikzpicture}
		\end{minipage}   \hspace{20pt}             
		\begin{minipage}[c]{\linewidth}
			\begin{tikzpicture}[align=center, 
				minimum size=5mm,
				bend angle=45, allow upside down] 
				\tikzset{node distance = 0.7cm and 0.7cm}
				
				\node (10)  {$[1,e]$};
				\node (00) [left =of 10] {$[0,e]$} edge[->] (10); 
				\node (20) [ right =of 10] {$\ldots$} ; 
				\node (30) [right = of 20] {$[n,e]$};
			
				\node (31) [below = of 30] {$[n,\sigma]$};
				\node (21) [left =of 31] {$\ldots$};
				\node (11) [left = of 21] {$[1,\sigma]$};
				\node (01) [left = of 11] {$[0,\sigma]$};
			
				\path[->] (10) edge node(mid1)[midway]{} (20);
				\path[->] (20) edge (30);
		
				\path[->] (01) edge (11);
				\path[->] (11) edge node(mid2)[midway]{} (21);
				\path[->] (21) edge (31);

				\path[->] (mid1) edge[bend left= 40] node[midway,right]{$\sigma$}(mid2);
				\path[->] (mid2) edge[bend left =40] node[midway,left]{$\sigma$} (mid1);
			\end{tikzpicture}
		\end{minipage}
	\par 
	\begin{minipage}[t]{.49\linewidth}
		\captionsetup{labelformat=empty}
		\caption{$0<k<n+1$}\label{Fig:k between 0 and n+1}  
	\end{minipage}\hspace{0.05\linewidth}%
	\begin{minipage}[t]{.49\linewidth}  
		\captionsetup{labelformat=empty}
		\caption{$k=n+1$}\label{Fig:k=n+1} 
	\end{minipage} 
	\end{figure}
If $k=0$ then $[i,e]=[i,\sigma]$ for every $i=0,1,...,n$. In this case the $C_2$-action on $[n]_0$ is trivial and then $[n]_0 \cong [n]$. In the case where $0<k<n+1$, the action by $\sigma \in C_2$ on $[n]_k$ is swapping the two branches displayed in the left hand side figure. Additionally, notice that $[i,e]=[i,\sigma]$ for each $k \leq i \leq n$. The elements in the subset $\set{[k,e],[k+1],e},...,[n,e]$ will be called \emph{real vertices} of $[n]_k$. Similarly, elements in the complement of such set are called \emph{non real vertices}. Using this terminology, the object $[n]_0$ consist of only real vertices. \\
Finally, if $k=n+1$ then $[n]_{n+1} \cong [n]\coprod[n]$ since there are no nontrivial identifications in $[n] \times G$ with respect to the relation $\sim_{n+1}$. The action by $C_2$ is swapping the two disconnected branches displayed in right hand side figure. In this case $[n]_{n+1}$ consist of only non real vertices.
	\end{enumerate}
\end{remark}

\begin{notation}\label{notation:G-as-C2}
	From now on and unless otherwise stated the group $C_2$ will be denoted by $G$. If $G$ acts on a set $A$ and $a\in A$, we will denote by $G_a$ the corresponding isotropy group.   
\end{notation}

\begin{para}\label{para:Gdelta and some remarks} \normalfont 
		Let $G\isovposets$ be the category of finite isovariant $G$-posets whose objects are finite $G$-posets and whose morphisms are isovariant order preserving maps. That is, if $A$ and $B$ are two finite $G$-posets then a morphism $f: A \lto B$ in $G\isovposets$ satisfies: 
	\begin{enumerate}[label=\stlabel{para:Gdelta and some remarks}, ref=\arabic*]
		\item \label{para:Gdelta and some remarks.1} $f$ is order preserving so that for all $a\leq a'$ we have that $f(a) \leq f(a')$.
		\item \label{para:Gdelta and some remarks.2} $f$ is equivariant \ie, $f(g\cdot a)=g\cdot f(a)$ for all $a \in A$ and $g \in G$.
		\item \label{para:Gdelta and some remarks.3} $f$ strictly preserves isotropy groups, \ie, $G_a=G_{f(a)}$ for all $a \in A$.  
	\end{enumerate}	
	
	\begin{definition}\label{definition:Gdelta}
		We define $G\Deltacat$ as the category whose objects are the partially ordered $G$-sets of the form $[n]_k$. Morphisms are given as: 
		\[ \Hom_{G\Deltacat}([n]_k, [m]_l) \coloneqq \Hom_{G\isovposets}([n]_k, [m]_l).\]
		Composition of morphisms correspond then to the usual composition of maps in $G\isovposets$.
	\end{definition}	
%

The category $G\Deltacat$ is not very far from the simplex category $\Deltacat$ in the sense that it also admits a combinatorial description for its morphisms: Let us fix $[n]_k \in G\Deltacat$ with $0 \leq k \leq n+1$. Among all isovariant order preserving maps in $G\Deltacat$ there are special ones:
\end{para}

\begin{para}\label{Para: isovariant coface maps} \normalfont \textbf{Coface maps.} Recall that for any $0\leq i\leq n+1$ there are order-preserving coface maps $d^i: [n]\lto [n+1]$ given by sending:
	
	\begin{equation*}
		 j\longmapsto 
		\begin{cases}
			j \quad &\text{if} \quad j<i \\
			j+1 \quad &\text{if} \quad j\geq i		
		\end{cases}
	\end{equation*}
	These maps induce $G$-poset maps $\begin{tikzcd} {[n]} \times G  \arrow[r,"{(d^i_0,\idd)}"] &{[n+1]} \times G 	\end{tikzcd}$ and moreover if $i \geq k$ we have induced isovariant coface maps $d^i_0: [n]_k \lto [n+1]_k$ given by: 
	
	\begin{equation*}
	[j,g]\longmapsto 
		\begin{cases}
			[j,g] \quad &\text{if} \quad j<i \\
			[j+1,g] \quad &\text{if} \quad j\geq i		
		\end{cases}
	\end{equation*}
	Similarly if $i\leq k$	we also have induced isovariant coface maps $d_1^i: [n]_k \lto [n+1]_{k+1}$ defined in exactly the same way. Notice that $d_0^i$ and $d_1^i$ only differ in that they have different codomains.  
The coface maps having sub-index equal to zero correspond to coface maps omitting some \emph{real vertex} thus sometimes we will refer to $d_0^i$ as a \emph{real coface map}. Similarly, the coface maps having sub-index equal to one  $d_1^i$ correspond to coface maps omitting a \emph{non-real vertex} so that we will called them \emph{non-real coface maps}. Aditionally notice that for $i=k$ we have both, a real coface map $d_0^k: [n]_k \lto [n+1]_k$ and a non-real coface map $d_1^k: [n]_k \lto [n+1]_{k+1}$.

\end{para}

\begin{example}\label{Ex:cofacemaps from 21 to 31}
	Let us describe, geometrically, the real and non-real coface maps $d^i_{0}:[2]_1 \lto [3]_{1}$ for $1\leq i \leq 3$ and $d^i_1:[2]_1 \lto [3]_2$ for $i\leq 1$ respectively. In this case we then have real coface maps $d_0^1, d_0^2, d_0^3$ and non-real coface maps $d_1^0, d_1^1$.  
	\begin{figure}[h]
		\caption{{}} \label{fig:cofacemaps from 21 to 31}
		\begin{center}
			\begin{tikzpicture}[line join = round, line cap = round,baseline=1ex,scale=1.2, transform shape, font=\tiny]
				\coordinate [label=above:{$[0,e]$}] (00) at (0,{sqrt(2)},0);
				\coordinate [label=left:{$[3,e]$}] (3) at ({-.5*sqrt(3)},0,-.5);
				\coordinate [label=below right:{$[2,e]$}] (2) at (0,0,0.6);
				\coordinate [label=right:{$[1,e]$}] (1) at ({.5*sqrt(3)},0,-.5);
				\coordinate [label=below:{$[0,\sigma]$}] (01) at (0.9,-0.5, {sqrt(5)});
				
				\coordinate [label=above:{$[0,e]$}] (000) at (-4,{sqrt(2)},0); 
				\coordinate [label=left:{$[1,e]$}] (10) at ({-4-.5*sqrt(3)},0,-.5);
				\coordinate [label=right:{$[2,e]$}] (20) at ({-4+.5*sqrt(3)},0,-.5);
				\coordinate [label=below:{$[0,\sigma]$}] (001) at (-4+0.9,-0.5, {sqrt(5)});
				\coordinate [label=above:{$[0,e]$}] (0000) at (4,{sqrt(2)},0); 
				\coordinate [label=left:{$[1,e]$}] (100) at ({4-.5*sqrt(3)},0,-.5);
				\coordinate [label=right:{$[2,e]$}] (200) at ({4+.5*sqrt(3)},0,-.5);
				\coordinate [label=below:{$[0,\sigma]$}] (0001) at (4+0.9,-0.5, {sqrt(5)});
				
				\coordinate [label=above:{$[0,e]$}] (00000) at (0,{sqrt(2)-5},0); 
				\coordinate [label=left:{$[1,e]$}] (1000) at ({-.5*sqrt(3)},0-5,-.5);
				\coordinate [label=right:{$[2,e]$}] (2000) at ({.5*sqrt(3)},0-5,-.5);
				\coordinate [label=below:{$[0,\sigma]$}] (00001) at (0.9,-0.5-5, {sqrt(5)});
				
				\begin{scope}[decoration= {markings, mark=at position 0.5 with {\arrow{stealth}}}, xshift=0cm]
					\filldraw[fill=cyan, opacity=0.3] (1)--(2)--(3)--cycle; 
					\filldraw[fill=lightgray,fill opacity=0.7] (2)--(3)--(00)--cycle; 
					\filldraw[fill=lightgray, fill opacity=0.2] (1)--(2)--(00)--cycle;
					\filldraw[fill=lightgray, fill opacity=0.2] (1)--(2)--(01)--cycle;
					\filldraw[fill=lightgray, fill opacity=0.7] (2)--(3)--(01)--cycle;
					
					\draw[postaction=decorate] (00)--(1);
					\draw[postaction=decorate] (00)--(2);
					\draw[postaction=decorate] (00)--(3);
					\draw[postaction=decorate] (01)--(1);
					\draw[postaction=decorate] (01)--(2);
					\draw[postaction=decorate] (01)--(3);
					\draw[postaction=decorate] (1)--(2);
					\draw[postaction=decorate] (2)--(3);
					\draw[cyan, postaction=decorate,opacity=0.3] (1)--(3);
				\end{scope}  
				
				\begin{scope}[decoration={markings,
						mark=at position 0.5 with {\arrow{stealth}}}]
					\filldraw[fill=lightgray, opacity=0.3] (000)--(10)--(20)--cycle;
					\filldraw[fill=lightgray, opacity=0.3] (10)--(20)--(001)--cycle;
					\draw[cyan, postaction=decorate] (10)--(20);
					\draw[postaction=decorate] (000)--(10);
					\draw[postaction=decorate] (000)--(20);
					\draw[postaction=decorate] (001)--(10);
					\draw[postaction=decorate] (001)--(20);
				\end{scope}	
				
				\begin{scope}[decoration={markings,
						mark=at position 0.5 with {\arrow{stealth}}}]
					\filldraw[fill=lightgray, opacity=0.3] (0000)--(100)--(200)--cycle;
					\filldraw[fill=lightgray, opacity=0.3] (100)--(200)--(0001)--cycle;
					\draw[cyan, postaction=decorate] (100)--(200);
					\draw[postaction=decorate] (0000)--(100);
					\draw[postaction=decorate] (0000)--(200);
					\draw[postaction=decorate] (0001)--(100);
					\draw[postaction=decorate] (0001)--(200);
				\end{scope}
				
				\begin{scope}[decoration={markings,
						mark=at position 0.5 with {\arrow{stealth}}}]
					\filldraw[fill=lightgray, opacity=0.3] (00000)--(1000)--(2000)--cycle;
					\filldraw[fill=lightgray, opacity=0.3] (1000)--(2000)--(00001)--cycle;
					\draw[cyan, postaction=decorate] (1000)--(2000);
					\draw[postaction=decorate] (00000)--(1000);
					\draw[postaction=decorate] (00000)--(2000);
					\draw[postaction=decorate] (00001)--(1000);
					\draw[postaction=decorate] (00001)--(2000);
				\end{scope}	
				\path[->, shorten >=20pt, shorten <=20pt,bend left=40](00000) edge node[left]{$d_0^2$} (01);
				\path[->, shorten >=16pt, shorten <=16pt,bend left=40](20) edge node[above]{$d_0^1$} (3);
				\path[->, shorten >=16pt, shorten <=16pt,bend left=40](100) edge node[below]{$d_0^3$} (1); 
			\end{tikzpicture} 
		\end{center}
	\end{figure}
	In the \Cref{fig:cofacemaps from 21 to 31} we have geometrical representations of the coface maps $d^i_0: [2]_1 \lto [3]_1$ where $i=0,1,2$. Here the two triangles joined by one edge represent $[2]_1$ and the two tetrahedra joined by a triangle represent $[3]_1$. Notice that this agrees with the terminology we introduced before as $[3]_1$ has a \enquote{triangle} of real points which is being glued to the vertices $[0,e]$ and $[0,\sigma]$. Aditionally, notice that for the real vertices $[i,e]$ with $i=1,2,3$ we could have also chosen $[i,\sigma]$ since $[i,e] = [i,\sigma]$ for $i=1,2,3$.  The coface maps then send the two triangles joined by an edge to the respective \enquote{faces} of $[3]_1$ as in the definition above.  For instance $d^1_0: [2]_1 \lto [3]_1$ sends: 
	\begin{gather*}
		[0,e] \longmapsto [0,e] \\ [0,\sigma] \longmapsto [0,\sigma] \\ [1,e] \longmapsto [2,e] \\ [2,e] \longmapsto [3,e]
	\end{gather*}
	so that $d^1_0$ is sending $[2]_1$ to the left coface of $[3]_1$ in  \Cref{fig:cofacemaps from 21 to 31}. Of course the reader must be warned since there is an abuse in our approach as we are using geometrical models for the $G$-sets $[2]_1$ and $[3]_1$. These will of course correspond to the topological realizations of representable functors $\Delta^{n,k}: G\Deltacat^{\text{op}} \lto \Set$ as in classical simplicial homotopy theory. We hope however that this abuse will help the reader to understand why we have defined coface maps in such a way.

	\noindent Similarly the next figure describes geometrically the two non-real coface maps $d^0_1, d^1_1: [2]_1 \lto [3]_2$. 
	
	\begin{figure}[H]
	\caption{{}} \label{fig:cofacemaps from 21 to 32}	
	\begin{center}
		\begin{tikzpicture}[line join = round, line cap = round,baseline=1ex,scale=1, transform shape, font=\tiny]
			\coordinate[label=above:{$[0,e]$}] (00) at (1,2,0);
			\coordinate[label=below:{$[0,\sigma]$}] (01) at (1,-2,0);
			\coordinate[label=left:{$[2,e]$}] (2) at (0,0,0);
			\coordinate[label=right:{$[3,e]$}] (3) at (2.5,0,0);
			\coordinate[label=above left:{$[1,e]$}] (10) at (1.5,0.5,0);
			\coordinate[label=below left:{$[1,\sigma]$}] (11) at (1.5,-0.5,0);
			
			\coordinate[label=above:{$[0,e]$}] (000) at (-5+1,2,0);
			\coordinate[label=below:{$[0,\sigma]$}] (001) at (-5+1,-2,0);
			\coordinate[label=left:{$[1,e]$}] (02) at (-5+0,0,0);
			\coordinate[label=right:{$[2,e]$}] (03) at (-5+2.5,0,0);
			
			\coordinate[label=above:{$[0,e]$}] (0000) at (5+1,2,0);
			\coordinate[label=below:{$[0,\sigma]$}] (0001) at (5+1,-2,0);
			\coordinate[label=left:{$[1,e]$}] (002) at (5+0,0,0);
			\coordinate[label=right:{$[2,e]$}] (003) at (5+2.5,0,0);

			\begin{scope}[decoration= {markings, mark=at position 0.5 with {\arrow{stealth}}}, xshift=0cm]
				\filldraw[fill=lightgray, opacity=0.5] (2)--(10)--(3)--cycle; 
				\filldraw[fill=lightgray,fill opacity=0.5] (2)--(11)--(3)--cycle; 
				\filldraw[fill=lightgray, fill opacity=0.2] (2)--(00)--(10)--cycle;
				\filldraw[fill=lightgray, fill opacity=0.8] (00)--(10)--(3)--cycle;
				\filldraw[fill=lightgray, fill opacity=0.2] (2)--(11)--(01)--cycle;
				\filldraw[fill=lightgray, fill opacity=0.8] (01)--(11)--(3)--cycle; 
				\draw[cyan, postaction=decorate] (2)--(3);
				\draw[postaction=decorate] (00)--(2);
				\draw[postaction=decorate] (00)--(10);
				\draw[postaction=decorate] (00)--(3);
				\draw[postaction=decorate] (01)--(2);
				\draw[postaction=decorate] (01)--(3);
				\draw[postaction=decorate] (01)--(11);
				\draw[postaction=decorate] (10)--(2);
				\draw[postaction=decorate] (10)--(3);
				\draw[postaction=decorate] (11)--(2);
				\draw[postaction=decorate] (11)--(3);
				
				\filldraw[fill=lightgray, opacity=0.5] (02)--(001)--(03)--cycle;
				\filldraw[fill=lightgray, opacity=0.5] (02)--(000)--(03)--cycle;
				\draw[postaction=decorate] (001)--(02);
				\draw[postaction=decorate] (001)--(03);
				\draw[postaction=decorate] (000)--(02);
				\draw[postaction=decorate] (000)--(03);
				\draw[cyan,postaction=decorate] (02)--(03);

				\filldraw[fill=lightgray, opacity=0.5] (002)--(0001)--(003)--cycle;
				\filldraw[fill=lightgray, opacity=0.5] (002)--(0000)--(003)--cycle;
				\draw[postaction=decorate] (0000)--(002);
				\draw[postaction=decorate] (0000)--(003);
				\draw[postaction=decorate] (0001)--(002);
				\draw[postaction=decorate] (0001)--(003);
				\draw[cyan,postaction=decorate] (002)--(003);
			\end{scope}	
			\path[->, shorten >=20pt, shorten <=20pt,bend left=60](03) edge node[above]{$d^1_1$} (2);
			\path[->, shorten >=20pt, shorten <=20pt,bend left=60](002) edge node[below]{$d^0_1$} (3);
		\end{tikzpicture}	
	\end{center}
	\end{figure}
\end{example}

\begin{para}\label{Par: isovariant deg maps} \normalfont \textbf{Codegeneracy maps.} 
	In a similar way the codegeneracy maps $ \begin{tikzcd} {[n+1]} \arrow[r,"s^i"] &{[n]} \end{tikzcd}$,  $0 \leq i \leq n$ in $\Deltacat$ induce the following isovariant order preserving maps:  For $i\geq k$ we have \emph{real codegeneracy maps} $s_0^i:[n+1]_k \lto [n]_k$ defined by sending
		\begin{equation*}
			[j,g]\longmapsto 
			\begin{cases}
				[j,g] \quad &\text{if} \quad j\leq i \\
				[j-1,g] \quad &\text{if} \quad j > i		
			\end{cases}
		\end{equation*}
		 If $i< k$, same formula above defines \emph{non-real codegeneracy maps} $s_1^i: [n+1]_{k+1}\lto [n]_k$.
	%
\end{para}

\begin{para}\label{paragraph: isovariant swapping maps}\normalfont \textbf{Swapping maps.} The nontrivial element $\sigma$ of the group $G$ induces an isovariant order preserving map also denoted by $\sigma: [n]_k \lto [n]_k$ defined by sending 
	\[ [j,g] \longmapsto [j,\sigma\cdot g], \quad \text{for all } \quad 0\leq j \leq n \text{  and   } g \in G.\] 
Notice that $\sigma$ is an isomorphism in $G\Deltacat$ as $\sigma \circ \sigma = \idd_{[n]_k}$. Recall that a branch of $[n]_k$ consists of a component of the non-real part of $[n]_k$ as in the following figure:
	\begin{figure}[H]
		\begin{center}
			\begin{tikzpicture}[align=center, scale=0.7,
				minimum size=4mm,
				>=stealth,
				bend angle=25] 
				\tikzset{node distance = 0.02cm and 0.3cm}
			
				\node (10)  {$[1,e]$};
				\node (00) [above left =of 10] {$[0,e]$} edge[->] (10); 
				\node (20) [below right =of 10, rotate=-20] {$\ldots$};  
				\node (30) [below right = of 20] {$[k-1,e]$}; 
				\node (31) [below  = of 30] {$[k-1,\sigma]$};
				\node (21) [below left =of 31,rotate=20] {$\ldots$};
				\node (11) [below left = of 21] {$[1,\sigma]$};
				\node (01) [below left = of 11] {$[0,\sigma]$};
				
				\path[->] (10) edge (20);
				\path[->] (20) edge (30);
				\path[->] (01) edge (11);
				\path[->] (11) edge (21);
				\path[->] (21) edge (31);
			\end{tikzpicture}
		\end{center}
	\captionsetup{labelformat=empty}
	\end{figure}
\noindent The swapping map $\sigma: [n]_k \lto [n]_k$ can be then thought of as the map swapping the two branches of $[n]_k$ and keeping fixed the real vertices (see also \enumref{Remark:ApproachPkfornk}{3}).   
\end{para}
Just as in the simplex category $\Deltacat$, it is easy to show that morphisms in $G\Deltacat$ are generated by the codegeneracies, coface and swapping maps. Specifically,

\begin{lemma}\label{Lemma: decomposition as coface and codeg maps}
	Every $\theta \in \Hom_{G\Deltacat}([m]_l, [n]_k)$ can be written as $\theta = g \circ  \gamma \circ \eta$ such that $\gamma$ is a finite composition of coface maps, $\eta$ is a finite composition of codegeneracies and $g \in G$. Furthermore, $g, \gamma$ and $\eta$ are uniquely determined by $\theta$.  
\end{lemma}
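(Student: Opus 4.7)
The plan is to adapt the classical epi--mono factorization in $\Deltacat$ to the isovariant setting, using the group element $g$ to absorb any branch-swapping that $\theta$ may induce.

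\textbf{Extracting $g$.} A first observation is that any morphism $\theta: [m]_l \to [n]_k$ must globally either preserve or swap the two non-real branches: indeed, for non-real $[j,e], [j',e] \in [m]_l$ with $j \leq j'$, the order on $[n]_k$ makes their images comparable, and two non-real elements of $[n]_k$ are comparable only if they lie in the same branch. When $l \geq 1$ this lets us define $g \in G$ to be the unique element such that $g^{-1}\circ\theta$ sends $[0,e]$ into the $e$-branch; when $l=0$ we take $g=e$ by convention. Writing $\theta' = g^{-1}\theta$, the map $\theta'$ is \emph{branch-preserving}, i.e.\ $\theta'([j,e])$ lies in the $e$-branch for every non-real $[j,e]$.

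\textbf{Classical factorization via the quotient.} A branch-preserving isovariant map $\theta': [m]_l \to [n]_k$ is equivalent data to an order-preserving map $\bar\theta': [m] \to [n]$ with $\bar\theta'(\{0,\ldots,l-1\}) \subseteq \{0,\ldots,k-1\}$ and $\bar\theta'(\{l,\ldots,m\}) \subseteq \{k,\ldots,n\}$: one direction quotients by the $G$-action (using $[n]_k / G \cong [n]$), while the inverse is $\theta'([j,h]) = [\bar\theta'(j), h]$. Apply the classical epi--mono factorization $\bar\theta' = \bar\gamma \circ \bar\eta$ through the image $[p]$, and set $q$ to be the cardinality of this image intersected with $\{0,\ldots,k-1\}$. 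Isovariance ensures that $\bar\eta$ and $\bar\gamma$ respect the non-real/real partition, so they lift to morphisms $\eta: [m]_l \twoheadrightarrow [p]_q$ and $\gamma: [p]_q \hookrightarrow [n]_k$ in $G\Deltacat$. An induction on $n-p$ then writes $\gamma$ as a composition of real and non-real cofaces $d_0^i, d_1^i$ by successively inserting missing real vertices or non-real orbits; dually, an induction on $m-p$ writes $\eta$ as a composition of $s_0^i, s_1^i$.

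\textbf{Uniqueness.} Given a second decomposition $\theta = g'\gamma'\eta'$, both $g^{-1}\theta$ and $(g')^{-1}\theta$ are branch-preserving; when $l \geq 1$, writing $\theta([0,e]) = [i,h]$ and demanding $g^{-1}h = e = (g')^{-1}h$ forces $g = g' = h$. The residual equality $\gamma\eta = \gamma'\eta'$ then yields $\gamma = \gamma'$ and $\eta = \eta'$ by the uniqueness of epi--mono factorization in $\Deltacat$, transported through the bijection above. When $l = 0$, isovariance sends $\theta$ entirely into the real part of $[n]_k$, where $G$ acts trivially, so the convention $g = e$ renders the decomposition unique and uniqueness of $\gamma, \eta$ again reduces to the classical case.

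The main anticipated obstacle is the bookkeeping of the integer $q$ when lifting across the quotient, and verifying that the ordinary epi--mono factorization of $\bar\theta'$ really does respect the non-real/real partition at every stage. This is where isovariance, as opposed to mere equivariance, plays a decisive role: without strict preservation of isotropy, there is no guarantee that $\bar\gamma$ maps non-real indices to non-real indices, hence no guarantee that the lifted maps live in $G\Deltacat$ at all.
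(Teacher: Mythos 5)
The paper does not actually supply a proof of this lemma — it is stated with ``it is easy to show'' — so there is no paper argument to compare against, but your proof is correct and is the natural one: reduce to the classical epi--mono factorization in $\Deltacat$ via the quotient by $G$, and lift back, with the group element $g$ absorbing the branch swap. Three small points worth making explicit. First, the reason $g$ is pinned down for $l\geq 1$ and the reason $(g')^{-1}\theta=\gamma'\eta'$ is branch-preserving in your uniqueness argument are the same: every coface $d_\varepsilon^i$ and codegeneracy $s_\varepsilon^i$ acts only on the integer index and leaves the $G$-coordinate untouched, so any word in them is branch-preserving; state this once. Second, your transport of uniqueness to $\Deltacat$ hides a step: in $G\Deltacat$ the intermediate object $[p]_q$ has the nontrivial automorphism $\sigma$ when $q\geq 1$, so epi--mono factorization is a priori unique only up to this twist; it is exactly the branch-preserving normalization (both $\eta$ and $\eta'$ land in the $e$-branch) that kills the $\sigma$-ambiguity, and when $q=0$ the automorphism $\sigma$ is the identity so there is nothing to check. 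Third, the uniqueness of $g$ claimed in the lemma genuinely fails when $l=0$ (or when $k=0$), since then the image of $\gamma\eta$ lies in the fixed part and $\sigma$ acts trivially there, so $g\gamma\eta=\sigma g\gamma\eta$; your convention $g=e$ in that case is the right fix, but you are patching an imprecision in the statement rather than a gap in your argument.
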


\begin{para}\label{para: combinatorial description of GDelta}
	\normalfont Using the \Cref{Lemma: decomposition as coface and codeg maps} above one can describe the morphisms in the category $G\Deltacat$ as being generated by the coface, codegeneracy and swapping maps subject to the following \emph{isovariant cosimplicial relations}: Let $n\geq 0$ and $0\leq k \leq n+1$:   
	
	\begin{equation}
		\begin{cases}
			d_1^j \circ d_1^i = d_1^i \circ d_1^{j-1}, \quad i<j<k; \\ 
			d_0^j \circ d_0^i = d_0^i \circ d_0^{j-1}, \quad k<i<j; \\ 
		\end{cases} 
	\end{equation}
	\begin{equation}
		\begin{cases}
			s_1^j \circ s_1^i = s_1^{i-1} \circ s_1^{j}, \quad j<i<k; \\ 
			s_0^j \circ s_0^i = s_0^{i-1} \circ s_0^{j}, \quad k\leq j<i; \\ 
		\end{cases} 
	\end{equation}
	\begin{equation}
		s_{\varepsilon}^j \circ d_{\delta}^i = 
		\begin{cases}
			d_{\delta}^i \circ s_{\varepsilon}^{j-1}, \qquad &i<j \text{ and } \delta=\varepsilon=0 \ \text{or } \delta=\varepsilon=1; \\
			\idd, \qquad &i=j,j+1 \text{ and } \delta=\varepsilon=0 \ \text{or } \delta=\varepsilon=1; \\
			d_{\delta}^{i-1} \circ s_{\varepsilon}^{j}, \qquad &i>j+1 \text{ and } \delta=\varepsilon=0 \ \text{or } \delta=\varepsilon=1.
		\end{cases}
	\end{equation}
	\begin{equation}
		\begin{cases}
			d^i_{\varepsilon} \circ \sigma = \sigma \circ d^i_{\varepsilon}, \quad &\varepsilon=0,1 \\
			s^j_{\delta} \circ \sigma = \sigma \circ s^j_{\delta}, \quad &\delta=0,1 \\ 
			\sigma \circ \sigma = \idd.
		\end{cases}
	\end{equation}
\end{para}

The category $G\Deltacat$ is a skeletal category in the sense of \cite[Chapter 8, \S8.1]{Cisinski2016LPCMDTH}: 
\begin{proposition}\label{Prop: The category Gdelta is einlenber-Zilber}
	The category $G\Deltacat$ is an skeletal category.
\end{proposition}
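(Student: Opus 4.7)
The plan is to verify the defining axioms of a skeletal category from \cite[Ch.~8, \S8.1]{Cisinski2016LPCMDTH} one at a time, using the decomposition already afforded by \Cref{Lemma: decomposition as coface and codeg maps}. I would set the degree function $d([n]_k) := n$, define the wide subcategory $G\Deltacat^+$ to be the one generated by the coface maps together with the swapping maps $\sigma$, and dually $G\Deltacat^-$ to be generated by the codegeneracies together with the $\sigma$'s. Using the commutation relations of paragraph~\ref{para: combinatorial description of GDelta}, namely $d^i_\varepsilon \circ \sigma = \sigma \circ d^i_\varepsilon$ and $s^j_\delta \circ \sigma = \sigma \circ s^j_\delta$, every morphism of $G\Deltacat^+$ can be normalized as $g \circ \gamma$ with $g \in G$ and $\gamma$ a composition of cofaces, and dually every morphism of $G\Deltacat^-$ as $\eta \circ g$.

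I would next identify the isomorphisms of $G\Deltacat$: they are exactly the elements of $G$ acting as automorphisms at each $[n]_k$. Since cofaces strictly increase the cardinality $|[n]_k|=n+1+k$ and codegeneracies strictly decrease it, the unique decomposition of the Lemma forces both $\gamma$ and $\eta$ to be trivial in any invertible $\theta$, leaving $\theta = g$; matching cardinalities and counts of non-real vertices then pins down $(m,l)=(n,k)$. In particular $G\Deltacat^+ \cap G\Deltacat^-$ consists exactly of these isomorphisms, they preserve degree, and all other morphisms in $G\Deltacat^+$ (resp.\ $G\Deltacat^-$) strictly raise (resp.\ lower) it.

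The core step is the factorization axiom: given any $\theta \in \Hom_{G\Deltacat}([m]_l,[n]_k)$, I would set $i := g \circ \gamma$ and $p := \eta$ in the decomposition $\theta = g \circ \gamma \circ \eta$ from \Cref{Lemma: decomposition as coface and codeg maps}, obtaining $\theta = i \circ p$ with $i \in G\Deltacat^+$ and $p \in G\Deltacat^-$. For uniqueness up to isomorphism, any competing factorization $\theta = (g'\circ\gamma') \circ (\eta'\circ g'')$ can be brought into the canonical form of the Lemma by pushing $g''$ across $\eta'$ via the swap-codegeneracy commutation and absorbing the result into $g'$; the uniqueness clause of the Lemma then identifies the two presentations via a unique element of $G$, matching exactly Cisinski's clause. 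The section property for $G\Deltacat^-$ is immediate from the relation $s^j_\varepsilon \circ d^j_\varepsilon = \idd$ in paragraph~\ref{para: combinatorial description of GDelta}, and compositions of codegeneracies inherit sections by composing the corresponding cofaces in reverse. The main obstacle I anticipate is bookkeeping of the two sub-indices $\varepsilon,\delta \in \set{0,1}$ when normalizing, since real and non-real cofaces and codegeneracies alter $k$ differently, and confirming that the "up to unique isomorphism" clause of Cisinski's definition is realized precisely by the $G$-action and not anything finer.
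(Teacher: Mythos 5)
Your proof is correct and takes essentially the same approach as the paper: both set $d([n]_k)=n$, obtain the epi--mono factorization from the unique decomposition $\theta=g\circ\gamma\circ\eta$ of \Cref{Lemma: decomposition as coface and codeg maps}, and verify that epimorphisms admit sections. The only small divergences are presentational: you build sections directly from the relation $s^j_\varepsilon\circ d^j_\varepsilon=\idd$ and define $G\Deltacat^{\pm}$ as the subcategories generated by cofaces (resp.\ codegeneracies) and $\sigma$, whereas the paper takes $G\Deltacat_{\pm}$ to be the classes of all monomorphisms/epimorphisms and cites surjectivity of $G$-poset epimorphisms for the section axiom; both descriptions of $G\Deltacat_{\pm}$ coincide by the uniqueness clause of \Cref{Lemma: decomposition as coface and codeg maps}.
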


\begin{proof}
	Let $G\Deltacat_{+}$ denote the class of monomorphisms in $G\Deltacat$ and $G\Deltacat_{-}$ be the class of epimorphisms and define $d: \ob(G\Deltacat) \lto \NN$ by $d([n]_k)= n$. The axioms Sq0 and Sq1 from \cite[Definition 8.1.1]{Cisinski2016LPCMDTH} are verified immediately. To show axiom Sq2, consider a morphism $\theta: [n]_k \lto [m]_l$ in $G\Deltacat$. Using \Cref{Lemma: decomposition as coface and codeg maps} we can factor $\theta$ either as $\theta =\pi \circ \delta$ or $\theta = \sigma \circ \pi \circ \delta$. It is sufficient to consider the former decomposition as any isomorphism is in $G\Deltacat_{+}$ and also in $G\Deltacat_{-}$ and $\sigma$ is an isomorphism. The decomposition $\theta = \pi \circ \delta$ is such that $\pi$ is a composition of coface maps (coface maps are monomorphisms) and therefore is in $G\Deltacat_{+}$ and similarly $\delta$ is a composition of codegeneracy maps thus it belongs to $G\Deltacat_{-}$ (codegeneracies are epimorphisms). Finally if $\alpha: [n]_k \lto [m]_l$ is in $G\Deltacat_{-}$ then $\alpha$ is in particular an epimorphism of $G$-posets and any epimorphism of $G$-posets is surjective \cite[\nopp 7.6]{MH} so that it has a right inverse, \ie, there exist $\beta : [m]_l \lto [n]_k$ in $G\Deltacat$ such that $\alpha \circ \beta = \idd_{[m]_l}$ and thus $\alpha$ admits a section. This proves first part of axiom Sq3. It is clear that any two epimorphisms in $G\Deltacat$ are equal if and only if they have the same set of sections. We conclude that $G\Deltacat$ is an skeletal category.  
\end{proof}

	\section{Isovariant Simplicial Sets}\label{section:isovariant simplicial sets}
\begin{definition}
	An \emph{isovariant simplicial set} $X$ is defined as contravariant functor $X: G\Deltacat^{\text{op}}\lto \Set$. Morphisms between isovariant simplicial sets correspond to natural transformations between these. Isovariant simplicial sets assemble into a category denoted by $\ssetisov$, sometimes we might use the notation $\Fun[G\Deltacat^{\text{op}},\Set]$.
\end{definition}


\begin{example}\label{Example:isovariant_topological_nk_simplex}
	Let $G\Top$ be the category of compactly generated not necessarily weak Hausdorff spaces (see Remark 2.7 \cite{Yeakel2019IET}) with continuous left $G$-actions. Recall  that we refer to its objects as $G$-spaces and morphisms are continuous $G$-equivariant maps of $G$-spaces. Let $G\Top^{\text{isov}}$ be the category of $G$-spaces with isovariant maps. There is a covariant functor $|\Delta^{\bullet,\bullet}|: G\Deltacat \lto G\Top^{\text{isov}}$ defined by the following: 
	\begin{itemize}
		\item[$\bullet$] An object $[n]_k$ is sent to the space $|\Delta^{n,k}|$ defined by the pushout: 
		
		\[
		\begin{tikzcd}
			|\Delta^{\{k,k+1,...,n\}}|  \arrow[r,"\iota^n_k"] \arrow[d,"\iota^n_k",swap] &|\Delta^n| \arrow[d] \\ 
			|\Delta^n| \arrow[r] & |\Delta^{n,k}| \arrow[lu, phantom, "\ulcorner", very near start]
		\end{tikzcd}
		\]
		where $|\Delta^n|$ denotes the usual topological $n$-simplex. 
		Similarly $|\Delta^{\{k,k+1,...,n\}}|$ denotes the topological $n-k+1$-simplex: 
		\[ |\Delta^{\{k,k+1,...,n\}}|=\setbar{\paren{t_0,...,t_{n-k}}\in [0,1]^{n-k+1}}{\sum_{i=0}^{n-k}t_i=1}\]
		so that $\iota^n_k: |\Delta^{\{k,k+1,...,n\}}| \lto |\Delta^n|$ is the continuous map sending 
		\[(t_0,...,t_{n-k}) \longmapsto (\underbrace{0,...,0}_{k \text{ times}},t_0,t_1,...,t_{n-k})\] 
		Notice that $|\Delta^{n,k}|$ comes naturally with an action by $G=C_2$ where the nontrivial element acts by swapping elements which are not identified in the pushout and trivially on elements that are identified. This ties up with the geometrical intuition we had for $[n]_k$. For instance $|\Delta^{3,1}|$ is represented as in Example \ref{Ex:cofacemaps from 21 to 31} by the following figure: 
		
		\begin{figure}[H]
			\caption{Topological standard $(2,1)$ simplex} \label{fig:TopologicalDelta31}
			\begin{center}
				\begin{tikzpicture}[line join = round, line cap = round,baseline=1ex,scale=1.5, transform shape, font=\tiny]
					\coordinate [label=above:{$[0,e]$}] (00) at (0,{sqrt(2)},0);
					\coordinate [label=left:{$[3,e]$}] (3) at ({-.5*sqrt(3)},0,-.5);
					\coordinate [label=below right:{$[2,e]$}] (2) at (0,0,0.6);
					\coordinate [label=right:{$[1,e]$}] (1) at ({.5*sqrt(3)},0,-.5);
					\coordinate [label=below:{$[0,\sigma]$}] (01) at (0.9,-0.5, {sqrt(5)});
					
					\begin{scope}[decoration= {markings, mark=at position 0.5 with {\arrow{stealth}}}, xshift=0cm]
						\filldraw[fill=cyan, opacity=0.3] (1)--(2)--(3)--cycle; 
						\filldraw[fill=lightgray,fill opacity=0.7] (2)--(3)--(00)--cycle; 
						\filldraw[fill=lightgray, fill opacity=0.2] (1)--(2)--(00)--cycle;
						\filldraw[fill=lightgray, fill opacity=0.2] (1)--(2)--(01)--cycle;
						\filldraw[fill=lightgray, fill opacity=0.7] (2)--(3)--(01)--cycle;
						
						\draw[postaction=decorate] (00)--(1);
						\draw[postaction=decorate] (00)--(2);
						\draw[postaction=decorate] (00)--(3);
						\draw[postaction=decorate] (01)--(1);
						\draw[postaction=decorate] (01)--(2);
						\draw[postaction=decorate] (01)--(3);
						\draw[postaction=decorate] (1)--(2);
						\draw[postaction=decorate] (2)--(3);
						\draw[cyan, postaction=decorate,opacity=0.3] (1)--(3);
					\end{scope} 
				\end{tikzpicture}
			\end{center}
		\end{figure}
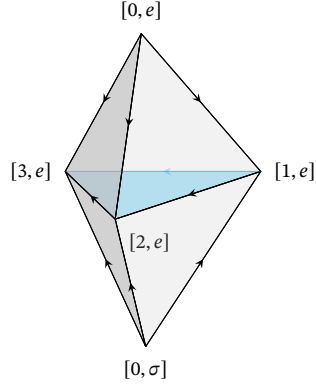
		\item[$\bullet$] A morphism $\theta: [n]_k \lto [m]_l$ in $G\Deltacat$ induces continuous maps 
		
		\begin{align*}
			\theta_{\ast}: |\Delta^n|& \lto |\Delta^m| \\ (t_0,...,t_n) &\longmapsto \left( \sum_{[j,e]\in \theta^{-1}[0,e]}t_j  \ ,...,\sum_{[j,e]\in \theta^{-1}[l-1,e]}t_j \ , \sum_{[j,e]\in \theta^{-1}[l,e]}t_j,...,\sum_{[j,e]\in \theta^{-1}[m,e]}t_j \right)
		\end{align*}
%
		and 
		\begin{align*}
			\theta_{\ast}^{k,l}: |\Delta^{\{k,k+1,...,n\}}|& \lto |\Delta^{\{l,l+1,...,m\}}| \\ (t_0,...,t_{n-k}) &\longmapsto \left( \sum_{[j,e]\in \theta^{-1}[l,e]}t_{j-k} \ , ...,\sum_{[j,e]\in \theta^{-1}[m,e]}t_{j-k} \right)
		\end{align*} 

so that,
	\begin{equation*}
		\theta_{\ast} \circ \iota^n_k = \iota^m_l \circ \theta^{k,l}_{\ast} 
	\end{equation*}
	These maps then assemble into a diagram, where the top and left faces commute in addition to the front and back faces: 
	
	\[
	\begin{tikzcd}[row sep=1.5em, column sep = 1.5em]
		|\Delta^{\{k,...,n\}}| \arrow[rr,"\iota^n_k"] \arrow[dr, swap,"\theta^{k,l}_{\ast}"] \arrow[dd,"\iota^n_k",swap] &&
		|\Delta^n| \arrow[dd] \arrow[dr,"\theta_{\ast}"] \\
		& |\Delta^{\{l,...,m\}}| \arrow[rr,"\iota^m_l",near start,crossing over] &&
		|\Delta^m| \arrow[dd] \\
		|\Delta^n| \arrow[rr] \arrow[dr, "\theta_{\ast}"] && |\Delta^{n,k}| \arrow[lu, phantom, "\ulcorner", very near start] \arrow[dr,"\theta_{\ast}",dashed] \\
		& |\Delta^m| \arrow[rr]&& |\Delta^{m,l}| 
		\arrow[from=2-2, to=4-2, "\iota^m_l",near start, crossing over]
	\end{tikzcd}
	\]
	By the universal property of pushouts there is a unique canonical map $|\Delta^{n,k}|\lto |\Delta^{m,l}|$ which we will also denote by $\theta_{\ast}$. Notice that $\theta_{\ast}$ is isovariant by construction.
	\end{itemize}	
\end{example}

\begin{remark}\label{Ex:standard nk isovariant simplicial set}
	Let $[n]_k$ be an object in  $G\Deltacat$. The standard $(n,k)$-isovariant simplicial set is defined as:
	\begin{center}
		$\Delta^{n,k} : \begin{array}{ll}
			G\Deltacat^{\text{op}} &\longrightarrow \Set \\
			\left[m\right]_l &\longmapsto \Delta^{n,k}[m]_l:= \Hom_{G\Deltacat}([m]_l,[n]_k)
		\end{array}$
	\end{center}
	That is, $\Delta^{n,k}$ is the contravariant functor on $G\Deltacat$ represented by the object $[n]_k$ which is clearly well defined. If $X : G\Deltacat^{\text{op}} \longrightarrow \Set$ is any other isovariant simplicial set and if we denote $X([n]_k)$ by $X_{n,k}$. As is it usual, Yoneda's lemma provide a natural isomorphism: 
%
	\[ \Hom_{\ssetisov}(\Delta^{n,k} , X) \cong X_{n,k}. \]
\end{remark}

\begin{example}[Isovariant singular simplicial set]\label{ex:isovariantsingularsimplicialset}
	Let $X$ in $G\Top^{\text{isov}}$. The \emph{isovariant singular simplicial set} is the isovariant simplicial set defined by: 
	\begin{center}
		$ \text{Sing}^{\text{isov}}_X:  \begin{array}{ll}
			G\Deltacat^{\text{op}} &\longrightarrow \Set \\ \left[ n \right]_k &\longmapsto \Hom_{G\Top^{\text{isov}}}(|\Delta^{n,k}|,X)
		\end{array}$
	\end{center}
	Notice that if $\theta : [n]_k \longrightarrow [m]_l$ is any morphism in $G\Deltacat$ then 
	\[
	\text{Sing}_X^{\text{isov}}(\theta): \Hom_{G\Top^{\text{isov}}}(|\Delta^{m,l}|,X) \longrightarrow \Hom_{G\Top^{\text{isov}}}(|\Delta^{n,k}|,X) \]
	is defined by sending $f \mapsto f \circ \theta_{\ast}$, where $\theta_{\ast}: |\Delta^{n,k}| \lto |\Delta^{m,l}|$ is the induced isovariant map from \Cref{Example:isovariant_topological_nk_simplex}.
	It is easy to prove that $\Sing_X^{\text{isov}}$ is indeed a functor. Furthermore, $\Sing^{\text{isov}}$ can be considered as a functor from the category $G\Top^{\text{isov}}$ to $\ssetisov$. 
	\[
	\Sing^{\text{isov}}:  \begin{array}{ll}
		G\Top^{\text{isov}} &\longrightarrow \ssetisov \\ X &\longmapsto \Sing^{\text{isov}}_X
	\end{array}
	\]
	where for each continuous isovariant map $f: X \lto Y$, $\Sing^{\text{isov}}(f)$ is the isovariant simplicial set map defined by
	\[ \text{Sing}^{\text{isov}}(f)_{[n]_k} (\alpha) = f \circ \alpha \]
	for each $[n]_k \in G\Deltacat$ and $\alpha \in \Hom_{{G\Top}^{\text{isov}}}(|\Delta^{n,k}|, X)$. 
\end{example}
Let $X: G\Deltacat^{\text{op}} \longrightarrow \Set$ be an isovariant simplicial set. For a morphism $\theta$ in $G\Deltacat$ we will usually write $\theta^{\ast}$ for the image $X(\theta)$ under $X$. For the particular morphisms we have defined in \ref{Para: isovariant coface maps}, \ref{Par: isovariant deg maps} and \ref{paragraph: isovariant swapping maps} we simply write $d^i_{\alpha_i}$ and $s^i_{\beta_i}$ (where $\alpha_{i}, \beta_i \in \{0,1\}$) and $\sigma$ for $X(d^i_{\alpha_i})$ and $X(s^i_{\beta_i})$ and $X(\sigma)$ respectively, these are called \emph{face}, \emph{degeneracy} and \emph{swapping} maps respectively. Thanks to the functoriality of $X$, these maps satisfy dual identities to the isovariant cosimplicial identities from \ref{para: combinatorial description of GDelta} called \emph{isovariant simplicial identities}.

	%
		As a consequence we have that any isovariant simplicial set $X: G\Deltacat^{\text{op}} \longrightarrow \Set$ is completely determined by a sequence of $G$-sets $X_{n,k}$ and isovariant maps $d^i_{\alpha_i}$, $s^i_{\beta_i}$ and $\sigma$: 
			
		\[
		\begin{tikzcd}[column sep=2cm, row sep=2cm,every label/.append style = {font = \tiny}]
			X_{0,0} \arrow[r,bend left=70,"s_0^0",near end] 
			& X_{1,0} 
			\arrow[l,shift left=.75ex,"d^1_0"] 	
			\arrow[l,shift right=.75ex,swap,"d_0^0"] 
			\arrow[r,bend left=70,"s_0^0",near end] 
			\arrow[r,bend right=70,"s^1_0",swap, near end] 
			&X_{2,0}  
			\arrow[l,shift right=0.5ex, scale=4,"d_0^0",swap] 
			\arrow[l, shift right=-0.3ex,"d^1_0",swap] 
			\arrow[l,shift right=-0.7ex, scale=4,"d^2_0",swap]  
			&X_{3,0}  
			\arrow[l,shift right=0.9ex, scale=4,"d_0^0",swap] 
			\arrow[l, shift right=0.3ex,scale=4,"d^1_0",swap] 
			\arrow[l,shift right=-0.3ex, scale=4,"d^2_0",swap] 
			\arrow[l,shift right=-0.9ex, scale=4,"d^3_0",swap] \ \ \cdots 
			\\	X_{0,1}  
			&X_{1,1} 
			\arrow[ul,"d^0_1", shorten >=20pt] 
			\arrow[l,"d^1_0",swap]
			&X_{2,1} 	
			\arrow[l,shift left=.75ex,"d^1_0"] 	
			\arrow[l,shift right=.75ex,swap,"d^2_0"] 
			\arrow[ul,"d^0_1",shorten >=20pt,crossing over]
			& X_{3,1} \ \ \cdots 
			\arrow[l,shift right=0.5ex, scale=4,"d^1_0",swap] 
			\arrow[l, shift right=-0.34ex,"d^2_0",swap] 
			\arrow[l,shift right=-0.7ex, scale=4,"d^3_0",swap]
			\arrow[lu,"d^0_1",shorten >=20pt, crossing over]
			\\ & X_{1,2} 
			\arrow[ul,shift left=.75ex,"d_1^1",shorten >=20pt] 	
			\arrow[ul,shift right=.75ex,swap,"d^0_1",shorten >=20pt]
			&\underset{\displaystyle \vdots}{X_{2,2}} 
			\arrow[ul,shift left=.75ex,"d_1^1",shorten >=20pt] 	
			\arrow[ul,shift right=.75ex,swap,"d^0_1",shorten >=20pt]
			\arrow[l,"d^2_0"]
			&\underset{\displaystyle \vdots}{X_{3,2}} \ \ \cdots
			\arrow[ul,shift left=.75ex,"d_1^1",shorten >=20pt] 	
			\arrow[ul,shift right=.75ex,swap,"d^0_1",shorten >=20pt]
			\arrow[l,shift left=.75ex,"d^2_0"] 	
			\arrow[l,shift right=.75ex,swap,"d^3_0"]
		\end{tikzcd}
		\]
		which satisfy the isovariant simplicial identities. Notice that in the last diagram we have decided to only draw some of the degeneracy maps.

		\begin{para}\label{para:isovariantboundaryandhorn} \normalfont \textbf{Simplicial isovariant spheres.} Just as in \Cref{Example:isovariant_topological_nk_simplex}, we also have a functor 
			\[ \Delta^{\bullet, \bullet} : G\Deltacat \lto \ssetisov\]
			defined objects by sending $[n]_k \longmapsto \Delta^{n,k}$. On morphisms it comes defined by postcomposition. Moreover, each coface map $d^i_{\alpha_i}: [n]_k\lto [n+1]_{k+\alpha_i}$ induces an isovariant simplicial set map $\Delta^{n,k} \lto \Delta^{n+1, k+\alpha_i}$ denoted again by $d^i_{\alpha_{i}}$. If $n>0$ we define: 
			
			\begin{enumerate}[label=\stlabel{para:isovariantboundaryandhorn}, ref=\arabic*]
				\item \label{para:isovariantboundaryandhorn.1}The $1-(n,k)$-sphere, denoted by $\partialbf^1 \Delta^{n,k}$, as the union of the images of the $i$-th non-real face maps $d^1_i:\Delta^{n-1,k-1} \lto \Delta^{n,k}$ for all $0 \leq i \leq k-1$, \ie, if we denote by $\partialbf_1^i\Delta^{n,k} = \im (d_1^i: \Delta^{n-1,k-1} \lto \Delta^{n,k})$, then 
				
				\[\partialbf_1 \Delta^{n,k} := \bigcup_{0\leq i \leq k-1} {\partialbf}_1^i \Delta^{n,k} \subseteq \Delta^{n,k}.\]
				\item \label{para:isovariantboundaryandhorn.2} Similarly, the $0-(n,k)-$sphere, denoted by $\partialbf_0\Delta^{n,k}$ is defined as 
				\[\partialbf_0 \Delta^{n,k} := \bigcup_{k\leq i \leq n}\partialbf^i_0 \Delta^{n,k} \subseteq \Delta^{n,k}\]
				where $\partialbf^i_0 \Delta^{n,k}$ denotes again the image of the real isovariant simplicial set face map $d^i_0: \Delta^{n-1,k}\lto \Delta^{n,k}$. Moreover both \enquote{unions} (colimits) are calculated pointwise as is any colimit in a category of presheaves. 
			\end{enumerate} 
			\emph{The simplicial isovariant $(n,k)$-sphere}, denoted by $\partialbf \Delta^{n,k}$, is the sub-object of $\Delta^{n,k}$ given by 
			\[ \partialbf \Delta^{n,k} = \partialbf^1\Delta^{n,k} \bigcup \partialbf^0\Delta^{n,k}\]
			Note that $\partialbf \Delta^{0,0} = \partialbf \Delta^{0,1} =\emptyset$, where $\emptyset$ is the unique isovariant simplicial set which consist of the empty set at each $[m]_l \in G\Deltacat$. Sometimes we will also refer to $\partialbf \Delta^{n,k}$ as the \emph{simplicial isovariant boundary} of $\Delta^{n,k}$. If $x$ denotes a $(m,l)$-non-degenerate simplex of $\partialbf\Delta^{n,k}$ then one must have that $m<n-1$. In particular, \cite[Proposition 8.1.22]{Cisinski2016LPCMDTH} implies that for any $n\geq 1$ and $0 \leq k \leq n+1$ we have
		\[\sk_{n-1}(\Delta^{n,k}) = \partialbf\Delta^{n,k}.\]
	\end{para}
		
		
		\begin{remark}\label{Remark:Notationdeltank_in_terms_of_its_nondeg_simplices}
			The standard $(n,k)$-isovariant simplicial set $\Delta^{n,k}$ has $n-k+1$ non-degenerate  $(n-1,k)$-simplices and $k$ non-degenerate $(n-1,k-1)$-simplices. These are determined precisely by the images of the corresponding non-real and real face maps: 
			\begin{equation}
				\begin{tikzcd}\label{Equation:non degenerate simplices of delta.1}
					\Delta^{n-1,k} \arrow[r,"d_0^i"] &\Delta^{n,k}, \quad i=k,k+1,...,n
				\end{tikzcd} 
			\end{equation}
			\begin{equation}\label{Equation:non degenerate simplices of delta.2}
				\begin{tikzcd}
					\Delta^{n-1,k-1} \arrow[r,"d_1^i"] &\Delta^{n,k}, \quad i=0,1,...,k-1
				\end{tikzcd} 
			\end{equation}
			By Yoneda's Lemma, the non-real and real face maps from \ref{Equation:non degenerate simplices of delta.1} and \ref{Equation:non degenerate simplices of delta.2} are completely determined by its images on $(0,0)$ and $(0,1)$ vertices thus we will sometimes write
			\[\Delta^{n,k} = \left\langle v_0^{\nreal}, v_1^{\nreal},...,v_{k-1}^{\nreal}, v_k^{\real}, ..., v_n^{\real} \right\rangle \] where each $v_i^{\nreal}$ represents a $(0,1)$-vertex  and $v_j^{\real}$ a $(0,0)$-vertex. Hence, with this notation the image of the non-real face map $d_0^i: \Delta^{n-1.k} \lto \Delta^{n,k}$ will be written as
			\[\partialbf_0^i(\Delta^{n,k}) = \left\langle v_0^{\nreal}, v_1^{\nreal},...,v_{k-1}^{\nreal},v_k^{\real},...,\widehat{v_i^{\real}},...,v_n^{\real}  \right\rangle, \quad k\leq i \leq n \ \]
			and similarly 
			\[\partialbf_1^i(\Delta^{n,k}) = \left\langle v_0^{\nreal}, v_1^{\nreal},...,v_{i-1}^{\nreal},\widehat{v_i^{\nreal}},...,v_k^{\real},...,v_n^{\real}  \right\rangle, \quad 0\leq i \leq k-1 \]
			As a consequence  
			\[ \partialbf \Delta^{n,k} = \left( \bigcup_{i=0}^{k-1} \left\langle v_0^{\nreal}, v_1^{\nreal},...,v_{i-1}^{\nreal},\widehat{v_i^{\nreal}},...,v_k^{\real},...,v_n^{\real}  \right\rangle \right) \bigcup \left(\bigcup_{i=k}^n \left\langle v_0^{\nreal}, v_1^{\nreal},...,v_{k-1}^{\nreal},v_k^{\real},...,\widehat{v_i^{\real}},...,v_n^{\real}  \right\rangle \right). \]
		\end{remark}

		\begin{para}\label{para: isovariant horn}  	\normalfont \textbf{Simplicial isovariant horns.}
			Let $n>0$, $0\leq k\leq n+1$ and $0\leq l \leq n$. The simplicial isovariant $l$-horn, denoted by $\Lambda^{n,k}_l$ is the sub-object of $\Delta^{n,k}$ given by: 
			
			\[ \Lambda^{n,k}_l \coloneqq \left(\bigcup_{\substack{0 \leq i \leq k-1 \\ i\neq l}}\partialbf_i^1\Delta^{n,k}\right) \bigcup \left(\bigcup_{\substack{k \leq i \leq n \\ i\neq l}}\partialbf_i^0\Delta^{n,k}\right).\]
		\end{para}
		\begin{remark}\label{Remark:simplices of isovariant horns}
			The $(a,b)$-simplices of the isovariant $l$-th horn $\Lambda^{n,k}_l$ can be described as: 
			\[ \left(\Lambda^{n,k}_l\right)_{a,b} = \setbar{\alpha: [a]_b \lto [n]_k}{[n]_k \smallsetminus \left\{[l,e], [l,\sigma] \right\} \nsubseteq \alpha[a]_b}.\]	
		Notice that $[l,e]$ could be the equal to $[l,\sigma]$ in which case the condition in the last equality turns into $[n]_k \smallsetminus \left\{[l,e]\right\}\nsubseteq \alpha[a]_b$. Furthermore, in terms of the notation introduced in \Cref{Remark:Notationdeltank_in_terms_of_its_nondeg_simplices}, we will also write: 
		
		\begin{equation}\label{Equation:notationforhorns}
			\begin{gathered}
				\Lambda^{n,k}_l = \partialbf\Delta^{n,k} \smallsetminus \left\langle v_0^{\nreal},v_1^{\nreal},..., \widehat{v_l^{\nreal}},...,v_{k-1}^{\nreal}, v_k^{\real},...,v_n^{\real}  \right\rangle,  \qquad  0\leq l \leq k-1 \\  \\
				\Lambda^{n,k}_l = \partialbf\Delta^{n,k} \smallsetminus \left\langle v_0^{\nreal},v_1^{\nreal},...,v_{k-1}^{\nreal},v_k^{\real},...,\widehat{v_l^{\real}},...,v_n^{\real}  \right\rangle,  \qquad k\leq l \leq n 
			\end{gathered}
		\end{equation}
		to indicate that the $l$-th horn $\Lambda^{n,k}_l$ correspond to the maximal isovariant subsimplicial set of $\partialbf \Delta^{n,k}$ containing all non-degenerate simplices of $\partialbf \Delta^{n,k}$ except for \[\left\langle [0,e],[0,\sigma], [1,e],[1,\sigma], ...,\widehat{[l,e]},\widehat{[l,\sigma]},..., [n,e]  \right\rangle \in (\partialbf \Delta^{n,k})_{n-1,k-\varepsilon}\] where $\varepsilon=0$ or $\varepsilon=1$. 
		\end{remark}

		\begin{definition}\label{def: Admisible simplicial isovariant horn}
			Let $n>0$, $0\leq k\leq n+1$ and $0\leq l \leq n$. The $l$-horn $\Lambda^{n,k}_l$ is said to be \emph{admissible} if there exists $a \in \{l-1,l\}$ such that
			\[ d_{\varepsilon}^l\left(s_{\varepsilon}^a(\Delta^{n,k}) \right)  = \partialbf_{\varepsilon}^l(\Delta^{n,k}), \quad \text{ where  } \varepsilon=0 \text{ or } 1.\]
			In any other case we say that $\Lambda^{n,k}_l$ is \emph{non admissible}. Notice that for $k=0$ or $k=n+1$ all horns are admissible and, the face $\partialbf_{\varepsilon}^l(\Delta^{n,k})$ from last definition is precisely the face (either real or non-real) removed from $\partialbf \Delta^{n,k}$ in the definition of $\Lambda_l^{n,k}$ (see \Cref{Equation:notationforhorns}). From now on $n-k$ will be called the \emph{dimension of the real part} of $\Delta^{n,k}$ or simply the \emph{real dimension} of $\Delta^{n,k}$. Similarly, $k$ will be called the \emph{non-real dimension} of $\Delta^{n,k}$. 
		\end{definition}
		
		This last definition may seem irrelevant as it is almost always satisfied, for instance in the case where $n=4$ and $k=3$ all horns are admissible since the coface and codegeneracy maps: 
		\[ \begin{tikzcd}[every label/.append
			style={font=\tiny}]
			{[3]}_3 \arrow[r,shift left=0.75ex,"d_0^3"] \arrow[r,shift right=0.75ex,"d_0^4",swap] &{[4]}_3 \end{tikzcd} \qquad \begin{tikzcd}[every label/.append
			style={font=\tiny}]  {[3]}_2 \arrow[r,shift left=2.3ex,"d_1^0"] \arrow[r,shift right=2.3ex,"d_1^2"] \arrow[r,"d_1^1"] &{[4]}_3
		\end{tikzcd}\]
		
		\[ \begin{tikzcd}[every label/.append
			style={font=\tiny}]
			{[4]}_3 \arrow[r,"s_0^3"]  &{[3]}_3 \end{tikzcd} \qquad \begin{tikzcd}[every label/.append
			style={font=\tiny}]  {[4]}_3 \arrow[r,shift left=0.8ex,"s_1^0"] \arrow[r,shift right=0.8ex,"s_1^1",swap] &{[3]}_2
		\end{tikzcd}\]
		and their corresponding induced isovariant simplicial set maps (via Yoneda's lemma) justify the equalities: 
		
		\begin{gather*}
			d_1^0(s_1^0(\Delta^{4,3})) = \partialbf_1^0(\Delta^{4,3}) \\ 
			d_1^1(s_1^0(\Delta^{4,3})) = \partialbf_1^1(\Delta^{4,3}) \\
			d_1^2(s_1^1(\Delta^{4,3})) = \partialbf_1^2(\Delta^{4,3}) \\
			d_0^3(s_0^3(\Delta^{4,3})) = \partialbf_0^3(\Delta^{4,3}) \\
			d_0^4(s_0^3(\Delta^{4,3})) = \partialbf_0^4(\Delta^{4,3}) \\
		\end{gather*}
		so that $\Lambda^{4,3}_0$, $\Lambda^{4,3}_1, \Lambda^{4,3}_2, \Lambda^{4,3}_3$ and $\Lambda^{4,3}_4$ are all admissible. Nevertheless this is not always true, one can consider for instance the $0$-th horn $\Lambda^{2,1}_0 \subseteq \Delta^{2,1}$, in this case there is no codegeneracy of the form $s_1^0 : [2]_1 \lto [1]_0$ such that 
		
		\[ d_0^1(s_0^1(\Delta^{2,1})) = \partialbf_0^1(\Delta^{2,1}).\]
		This last assertion is trivially true since there is no order-preserving isovariant map from $[2]_1$ to $[1]_0$. The $1$-horn $\Lambda^{2,1}_1 \subseteq \Delta^{2,1}$ is however admissible. \Cref{def: Admisible simplicial isovariant horn} guarantees the existance of some proper face of $\Delta^{2,1}$ also contained in $\Lambda^{2,1}_1$ such that $\Delta^{2,1}$ and, $\Lambda^{2,1}_1$ can be both \emph{retracted isovariantly} to such face. In the following picture we have from left to right the isovariant simplicial set $\Delta^{2,1}$, the mentioned proper face and the horn $\Lambda^{2,1}_1$. This is analogous to what happens in stratified homotopy, see \cite[Definition 1.10]{douteau2020simplicial}. 
		
		\begin{center}
			\begin{tikzpicture}[line join = round, line cap = round,baseline=1ex,scale=1.1, transform shape, font=\tiny]
				\coordinate[label=above:{$[0,e]$}] (00) at (1,2,0);
				\coordinate[label=below:{$[0,\sigma]$}] (01) at (1,-2,0);
				\coordinate[label=left:{$[2,e]$}] (2) at (0,0,0);
				\coordinate (3) at (2.5,0,0);
				
				\coordinate[label=above:{$[0,e]$}] (000) at (-5+1,2,0);
				\coordinate[label=below:{$[0,\sigma]$}] (001) at (-5+1,-2,0);
				\coordinate[label=left:{$[1,e]$}] (02) at (-5+0,0,0);
				\coordinate[label=right:{$[2,e]$}] (03) at (-5+2.5,0,0);
				
				\coordinate[label=above:{$[0,e]$}] (0000) at (5+1,2,0);
				\coordinate[label=below:{$[0,\sigma]$}] (0001) at (5+1,-2,0);
				\coordinate[label=left:{$[1,e]$}] (002) at (5+0,0,0);
				\coordinate[label=right:{$[2,e]$}] (003) at (5+2.5,0,0);

				\begin{scope}[decoration= {markings, mark=at position 0.5 with {\arrow{stealth}}}, xshift=0cm]
					\draw[postaction=decorate] (00)--(2);
					\draw[postaction=decorate] (01)--(2);
					\fill[cyan] (2) circle[radius=2pt];
					\fill (00) circle[radius=2pt];
					\fill (01) circle[radius=2pt];
					
					\filldraw[fill=lightgray, opacity=0.5] (02)--(001)--(03)--cycle;
					\filldraw[fill=lightgray, opacity=0.5] (02)--(000)--(03)--cycle;
					\draw[postaction=decorate] (001)--(02);
					\draw[postaction=decorate] (001)--(03);
					\draw[postaction=decorate] (000)--(02);
					\draw[postaction=decorate] (000)--(03);
					\draw[cyan,postaction=decorate] (02)--(03);
					\fill[cyan] (02) circle[radius=2pt];
					\fill[cyan] (03) circle[radius=2pt];
					\fill (001) circle[radius=2pt];
					\fill (000) circle[radius=2pt];
					
					\draw[postaction=decorate] (0000)--(002);
					\draw[postaction=decorate] (0001)--(002);
					\draw[cyan,postaction=decorate] (002)--(003);
					\fill (0000) circle[radius=2pt];
					\fill (0001) circle[radius=2pt];
					\fill[cyan] (002) circle[radius=2pt];
					\fill[cyan] (003) circle[radius=2pt];
				\end{scope}	
				\path[->, shorten >=20pt, shorten <=20pt,bend right=60](2) edge (03);
				\path[->, shorten >=20pt, shorten <=20pt,bend left=60](3) edge (002);
			\end{tikzpicture}	
		\end{center}			
		
		\Cref{def: Admisible simplicial isovariant horn} can be restated in the following equivalent way: 
		
		\begin{proposition}\label{Proposition:admissible horns}
			Let $n>0$. The $l$-th horn $\Lambda^{n,k}_l$ is non-admissible if and only if $k=1$ and $l=0$ or $k=l=n$. That is, the only non-admissible horns are those of the form $\Lambda^{n,1}_0$ and $\Lambda^{n,n}_n$. 
		\end{proposition}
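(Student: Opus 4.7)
The plan is to reduce the admissibility condition of \Cref{def: Admisible simplicial isovariant horn} to the existence of a single codegeneracy morphism in $G\Deltacat$, and then enumerate the exceptional pairs $(k,l)$ using the explicit existence ranges of coface and codegeneracy maps recorded in Section~\ref{section:c2isovariantsimplexcategory}.

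First I would observe that the parameter $\varepsilon$ in the definition is not free: if $0\leq l\leq k-1$ then the face $\partialbf^l_\varepsilon\Delta^{n,k}$ removed from $\partialbf\Delta^{n,k}$ is non-real and so $\varepsilon=1$, while if $k\leq l\leq n$ that face is real and so $\varepsilon=0$. Next I would argue that whenever the codegeneracy $s^a_\varepsilon\colon[n]_k\to[n-1]_{k-\varepsilon}$ is defined in $G\Deltacat$, the equality
\[d^l_\varepsilon\bigl(s^a_\varepsilon(\Delta^{n,k})\bigr)=\partialbf^l_\varepsilon\Delta^{n,k}\]
holds automatically. Indeed, the isovariant simplicial identity $s^a_\varepsilon\circ d^a_\varepsilon=\idd$ from \ref{para: combinatorial description of GDelta} exhibits $s^a_\varepsilon$ as a split epimorphism in $G\Deltacat$, so the induced map of representable presheaves is surjective; post-composing with the monomorphism $d^l_\varepsilon$ then yields exactly its image, which by definition is $\partialbf^l_\varepsilon\Delta^{n,k}$. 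Therefore admissibility of $\Lambda^{n,k}_l$ collapses to the question of whether some $a\in\{l-1,l\}$ with $a\geq 0$ admits the codegeneracy $s^a_\varepsilon$.

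Then I would invoke the explicit ranges from \ref{Par: isovariant deg maps}: a real codegeneracy $s^a_0\colon[n]_k\to[n-1]_k$ exists iff $k\leq a\leq n-1$, and a non-real codegeneracy $s^a_1\colon[n]_k\to[n-1]_{k-1}$ exists iff $0\leq a\leq k-2$. A short case analysis then concludes. In the non-real range $0\leq l\leq k-1$, the choice $a=l-1$ is admissible whenever $l\geq 1$, while $a=l$ is admissible whenever $l\leq k-2$; the unique configuration where both fail is $l=0$, $k=1$. In the real range $k\leq l\leq n$, the choice $a=l$ is admissible whenever $l\leq n-1$, while $a=l-1$ is admissible whenever $l\geq k+1$; the unique configuration where both fail is $l=n$, $k=n$. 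This simultaneously produces the admissible witness in every other case and verifies that $\Lambda^{n,1}_0$ and $\Lambda^{n,n}_n$ are genuinely non-admissible.

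The only substantive step is the first reduction: establishing that the identity $d^l_\varepsilon(s^a_\varepsilon(\Delta^{n,k}))=\partialbf^l_\varepsilon\Delta^{n,k}$ is automatic once $s^a_\varepsilon$ exists. Once this is in hand, the remainder is elementary bookkeeping on two arithmetic intervals, and I foresee no further obstacle beyond keeping the two cases (real vs.\ non-real) and the two candidate values of $a$ clearly separated.
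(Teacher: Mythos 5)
Your proof is correct, and it follows the same underlying idea as the paper's --- reduce admissibility to the existence of the appropriate codegeneracy $s^a_\varepsilon$ in $G\Deltacat$ and then do arithmetic on its range of definition --- but your version is organized more systematically and is, in a small but real way, more careful. The paper's proof handles the cases $k\geq 2,\,k\neq n$, then $k\leq 1$, then $k=n$, and in the first case it writes down a chain $[n]_k \overset{s_1^l}{\to} [n-1]_{k-1} \overset{d_1^l}{\to} [n]_k$ for $0\leq l\leq k-1$, as well as $[n]_k \overset{s_0^l}{\to}[n-1]_k\overset{d_0^l}{\to}[n]_k$ for $k\leq l\leq n$. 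But as your derivation of the ranges makes explicit, $s_1^l\colon[n]_k\to[n-1]_{k-1}$ is only isovariant for $0\leq l\leq k-2$ (at $l=k-1$ the vertex $[k-1,g]$ has trivial isotropy in the source but isotropy $G$ in the target), and similarly $s_0^l\colon[n]_k\to[n-1]_k$ is defined only for $k\leq l\leq n-1$; so the paper's stated chains do not exist at the boundary values $l=k-1$ and $l=n$. Those horns are nevertheless admissible --- via $a=l-1$ rather than $a=l$ --- which is exactly what your uniform treatment of both candidates $a\in\{l-1,l\}$ captures. Your upfront reduction lemma (that $d^l_\varepsilon(s^a_\varepsilon(\Delta^{n,k}))=\partialbf^l_\varepsilon\Delta^{n,k}$ is automatic once $s^a_\varepsilon$ exists, because $s^a_\varepsilon\circ d^a_\varepsilon=\idd$ exhibits $s^a_\varepsilon$ as a split epimorphism, hence surjective on representables) is the same mechanism the paper gestures at with ``an easy calculation,'' but isolating it first makes the rest pure interval bookkeeping and eliminates the off-by-one entirely. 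In short: same route, tighter execution.
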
	
		
		\begin{proof}
			Consider first the case where $k\geq 2$ and $k \neq n$. Then there are well defined coface and codegeneracy maps: 
			\[
			\begin{tikzcd}
				{[n]}_k \arrow[r,"s_1^{l}"] & {[n-1]}_{k-1} \arrow[r,"d_1^l"] & {[n]}_k
			\end{tikzcd} \quad 0\leq l \leq k-1
			\]
			\[ 
			\begin{tikzcd}
				{[n]}_k \arrow[r,"s_0^l"] & {[n-1]}_k \arrow[r,"d_0^l"] &{[n]}_k
			\end{tikzcd} \quad k \leq l \leq n
			\]	
			An easy calculation using \ref{Para: isovariant coface maps} and \ref{Par: isovariant deg maps} implies that $d_1^l(s_1^l(\Delta^{n,k})) = \partialbf_1^l(\Delta^{n,k})$ and $d_0^l(s_0^l(\Delta^{n,k})) = \partialbf_0^l(\Delta^{n,k})$ so that all horns $\Lambda^{n,k}_l$ with $k\geq 2$, $k \neq n$ are admissible. Hence if we suppose that $\Lambda^{n,k}_l$ is not admissible and $k \neq n$ necessarily we have that $k\leq 1$. Since for $k=0$ all horns are also admissible then we just have to study the case $k=1$. In the case where $k \neq n$ we are then led to study the admissible horns of $\Delta^{n,1}$. Suppose first $0<l\leq n$. Then $l\geq k=1$ and we have well defined codegeneracies and coface maps: 
			\begin{gather*}
				s_0^l: [n]_1 \lto [n-1]_1 \\ 
				d_0^l:[n-1]_1 \lto [n]_1
			\end{gather*}
			Their corresponding isovariant simplicial set maps $s_0^l: \Delta^{n,1}\lto \Delta^{n-1,1}$ and $d_0^l:\Delta^{n-1,1} \lto \Delta^{n,1}$ satisfy: 
			\begin{align*}
				d_0^l(s_0^l(\Delta^{n,1})) &= d_0^l \left( s_0^l \left( \left\langle v_0^{\nreal},v_1^{\real},...,v_l^{\real},...,v_{n}^{\real} \right\rangle \right)\right)  \\
				&= d_0^l\left(\left\langle v_0^{\nreal},v_1^{\real},...,v_l^{\real},v_l^{\real},v_{l+1}^{\real}...,v_{n-1}^{\real} \right\rangle\right) \\
				&= \left\langle v_0^{\nreal},v_1^{\real},...,\widehat{v_l^{\real}},v_{l+1}^{\real},...,v_{n}^{\real} \right\rangle \\
				&= \partialbf_0^l(\Delta^{n,1}).
			\end{align*}
			Hence, for $0<l\leq n$ we have that all $l$-horns of $\Delta^{n,1}$ are admissible. The $0$-horn $\Lambda^{n,1}_0$ is not admissible since there are no codegeneracies of the form
			
			\[
			\begin{tikzcd}
				{[n]}_1 \arrow[r,"s_1^{0}"]  & {[n-1]}_0
			\end{tikzcd} \quad \text{or} \quad 
			\begin{tikzcd}
				{[n]}_1 \arrow[r,"s_1^{-1}"]  & {[n-1]}_0
			\end{tikzcd} 
			\]
			The first map being equivariant but not isovariant and the second having a negative superscript which is not allowed. As a consequence if $\Lambda^{n,k}_l$ is non-admissible and $k\neq n$ then we are forced to have $k=1$ and $l=0$. On the other hand if $k=n \geq 2$ we are led to study the admissible horns of $\Delta^{n,n}$. In this case we have well defined coface and codegeneracy maps: 
			
			\[
			\begin{tikzcd}
				{[n]}_n \arrow[r,"s_1^{l}"] & {[n-1]}_{n-1} 
			\end{tikzcd} \quad 0\leq l \leq n-2
			\]
			\[
			\begin{tikzcd}
				{[n-1]}_{n-1} \arrow[r,"d_1^{l}"] & {[n]}_{n} 
			\end{tikzcd} \quad 0\leq l \leq n-1
			\]
			
			\[ 
			\begin{tikzcd}
				{[n-1]}_n \arrow[r,"d_0^n"] & {[n]}_n 
			\end{tikzcd}
			\]	
			These maps and their corresponding isovariant simplicial set maps in $\ssetisov$ are such that 
			\[ d_1^l(s_1^l(\Delta^{n,n})) = \partialbf_1^l(\Delta^{n,n})\] 
			for any $0\leq l \leq n-2$ so that all horns of the form $\Lambda^{n,n}_{l}$ with $n\geq 2$ and $0\leq l\leq n-2$ are admissible. Hence, if we suppose that $\Lambda^{n,n}_l$ is non-admissible necessarily we have that $l=n-1$ or $l=n$. However the horn $\Lambda^{n,n}_{n-1}$ is admissible since 
			
			\[ d_1^{n-1}(s_1^{n-2}(\Delta^{n,n})) = \partialbf_1^{n-1}(\Delta^{n,n}).\]	
			As a consequence we must have that $l=n$. Notice that $\Lambda^{n,n}_n$ is indeed non admissible as there are no codegeneracies of the form $s_0^{n-1}, s_0^n:[n]_n \lto [n-1]_n$ (as $[n]_n$ contains a real point whereas $[n-1]_n$ consist of only non-real points). \\
			Finally if $k=n=1$, notice that $\Delta^{1,1}$ does not admit admissible horns, \ie, $\Lambda^{1,1}_l$ is non-admissible for every $l=0,1$. This can be seen by observing that for any $a\in\{-1,0,1\}$, there are no codegeneracy maps of the form: 
			\[\begin{tikzcd}
				{[1]_1} & {[0]_1} && {[1]_1} & {[0]_0}
				\arrow["{s_0^{a}}", from=1-1, to=1-2]
				\arrow["{s_1^{a}}", from=1-4, to=1-5]
			\end{tikzcd}\]
			In other words, $\Lambda^{1,1}_l$ being non admissible implies that $l=0,1$. The converse direction is trivial from definitions \Cref{Para: isovariant coface maps}, \Cref{Par: isovariant deg maps} and \Cref{def: Admisible simplicial isovariant horn}. 
		\end{proof}
		
		\begin{remark}
			Notice that in the particular case where $n=k=1$, last proposition implies that the horns $\Lambda^{1,1}_1$ and $\Lambda^{1,1}_0$ of $\Delta^{1,1}$ are both non-admissible, this witness our philosophy that $(1,1)$-simplices should be thought as \enquote{points} and, as in classical simplicial homotopy theory, points should not have horns. We must also remark that in our theory a \enquote{point} could have different connotations. For instance, we could have \emph{real points} seen as $(0,0)$-simplices,  \emph{non-real points} seen as $(0,1)$-simplices or simply \emph{points} as $(1,1)$-simplices. We will see that there is a well defined notion of homotopy between isovariant simplicial sets and that the admissible horns inclusions correspond precisely to the horn inclusions that are isovariant elementary homotopy equivalences, we will prove this later in \ref{Lemma: characterization of isovariant horn inclusions}  once we introduce the notion of isovariant homotopy.
		\end{remark}

		\subsection{Geometric Realizations}\label{sec: realization of isovariant simplicial sets}
		
		The isovariant singular simplicial set functor from \Cref{ex:isovariantsingularsimplicialset} has a left adjoint called \emph{geometric realization} or simply \emph{realization}. There is a quick way to construct the realization functor using the theory of Kan extensions. Recall from \Cref{Example:isovariant_topological_nk_simplex} we have constructed a functor $|\Delta^{\bullet,\bullet}|: G\Deltacat \lto G\Top^{\text{isov}}$ and denote by $\catcal{Y}: G\Deltacat \lto \ssetisov$ the usual Yoneda embedding. 
		
		Since $G\Deltacat$ is small and $G\Top^{\text{isov}}$ is cocomplete and $\ssetisov$ is locally small, then the left Kan extension $\Lan_{\catcal{Y}}|\Delta^{\bullet,\bullet}| : \ssetisov \lto G\Top^{\text{isov}}$ of $|\Delta^{\bullet,\bullet}|$ along the Yoneda embedding exists \cite[\nopp 1.2.1]{riehl_2014} and can be computed as the coend
		
		\begin{align*} \Lan_{\catcal{Y}}|\Delta^{\bullet,\bullet}|(X) &= \int^{[n]_k \in G\Deltacat} \Hom(\catcal{Y}([n]_k), X) \cdot |\Delta^{n,k}| \\ &= \int^{[n]_k \in G\Deltacat} \Hom(\Delta^{n,k}, X) \cdot |\Delta^{n,k}| \\ &\simeq \int^{[n]_k \in G\Deltacat} X_{n,k} \cdot |\Delta^{n,k}|
		\end{align*}	 
		As coends are special types of colimits, notice that $|X|$ can be computed as the coequalizer \cite[\nopp 1.2.4]{riehl_2014}
		\begin{equation}\label{equation:isovaiantgeometricrealizationcoequalizer} \displaystyle
			|X| = \text{coeq} \left( 
			\begin{tikzcd} \displaystyle
				\coprod_{[n]_k\rightarrow [m]_l \in G\Deltacat} X_{m,l} \times |\Delta^{n,k}| \arrow[r,shift right=-0.3em] \arrow[r, shift left=-0.3em]& \displaystyle \coprod_{[n]_k \in G\Deltacat}  X_{n,k} \times |\Delta^{n,k}| 
			\end{tikzcd}
			\right)
		\end{equation}
		This realization $|X|$ can be equivalently computed as the colimit \cite[\nopp 1.2.6]{riehl_2014}: 
		\[
		|X|\simeq \colim \left(
		\begin{tikzcd}
			G\Deltacat \downarrow X \arrow[r,"U"] & G\Deltacat \arrow[r,"|\Delta^{\bullet,\bullet}|"] & G\Top^{\text{isov}} 
		\end{tikzcd}
		\right)
		\]
		where $G\Deltacat \downarrow X$ is the comma category having  objects isovariant simplicial set maps $\Delta^{n,k} \lto X$. Morphisms in $G\Deltacat \downarrow X$ from $x: \Delta^{n,k} \lto X$ to $y: \Delta^{m,l} \lto X$ are defined as commutative diagrams of isovariant simplicial sets
		
		\[
		\begin{tikzcd}
			\Delta^{n,k} \arrow[rr,"\theta"] \arrow[rd,"x",swap] &&\Delta^{m,l} \arrow[ld,"y"] \\ &X
		\end{tikzcd}
		\]
		Notice that $\theta$ is induced from a unique isovariant order-preserving map $\theta: [n]_k \lto [m]_l$. Moreover $U: G\Deltacat \downarrow X \lto G\Deltacat$ denotes the canonical forgetful functor. In particular, since the Yoneda embedding is fully faithful we have from \cite[\nopp 1.4.5]{riehl_2014} that \[\Lan_{\catcal{Y}}|\Delta^{\bullet,\bullet}| \circ \catcal{Y} \simeq |\Delta^{\bullet,\bullet}|.\] In other words, the realization of a representable isovariant simplicial set $\Delta^{n,k}$ coincides with the $G$-space $|\Delta^{n,k}|$ constructed in \Cref{Example:isovariant_topological_nk_simplex}. Just as in the classical simplicial theory, the realization functor is left adjoint to $\Sing^{\text{isov}}$. The proof follows essentially as in the classical case so we omit it. 
		
		\begin{lemma}\label{Lemma: realization adjoint to singular}
			The realization defined above is left adjoint to the isovariant singular simplicial set: 
			$\begin{tikzcd}
				|-| :\ssetisov \arrow[r,shift left=0.75ex] & G\Top^{\text{isov}}: \Sing^{\normalfont{\text{isov}}} \arrow[l,shift left=0.75ex] 
			\end{tikzcd}$
		\end{lemma}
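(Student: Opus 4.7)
The plan is to establish the adjunction directly from the coend presentation of the realization already derived in Equation~\eqref{equation:isovaiantgeometricrealizationcoequalizer}, by chasing through the standard nerve-realization paradigm and checking that the right adjoint produced by the general theory matches $\Sing^{\text{isov}}$ on the nose.

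First, I would use that for any $X \in \ssetisov$ and $Y \in G\Top^{\text{isov}}$, the universal property of the coend (applied to $|X|$) together with the fact that $\Hom_{G\Top^{\text{isov}}}(-,Y)$ turns colimits into limits yields a natural bijection
\[
\Hom_{G\Top^{\text{isov}}}(|X|,Y) \;\cong\; \int_{[n]_k \in G\Deltacat} \Hom_{\Set}\bigl(X_{n,k},\, \Hom_{G\Top^{\text{isov}}}(|\Delta^{n,k}|,Y)\bigr).
\]
Identifying the inner hom-set with $\Sing^{\text{isov}}(Y)_{n,k}$ by the very definition of the isovariant singular simplicial set (Example~\ref{ex:isovariantsingularsimplicialset}), the right-hand side becomes the end
\[
\int_{[n]_k \in G\Deltacat} \Hom_{\Set}\bigl(X_{n,k},\, \Sing^{\text{isov}}(Y)_{n,k}\bigr),
\]
which is precisely the set of natural transformations $X \Rightarrow \Sing^{\text{isov}}(Y)$, i.e.\ $\Hom_{\ssetisov}(X, \Sing^{\text{isov}}(Y))$.

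The second step is to verify naturality of the bijection in both $X$ and $Y$. Naturality in $Y$ is immediate, since everything is built from post-composition with a morphism $Y \to Y'$. Naturality in $X$ follows because both sides are defined by universal properties of the coend/end, so a morphism $X' \to X$ acts by pre-composition on representing families $\{X_{n,k} \to \Sing^{\text{isov}}(Y)_{n,k}\}_{[n]_k}$, and this is compatible with the isomorphism above. Equivalently, since $|-|$ is constructed as the left Kan extension $\Lan_{\catcal{Y}} |\Delta^{\bullet,\bullet}|$, one can invoke the general nerve-realization adjunction \cite[\nopp 1.4.1]{riehl_2014}: the right adjoint to such a left Kan extension is always given by $Y \mapsto \Hom_{G\Top^{\text{isov}}}(|\Delta^{\bullet,\bullet}|,Y)$, which is exactly $\Sing^{\text{isov}}(Y)$.

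No serious obstacle is expected: the only thing to check carefully is that the coend formula for $|X|$ is genuinely computed in $G\Top^{\text{isov}}$ (and not just in $G\Top$), so that the co-universal property is against isovariant maps. This is ensured by the fact that each structural map $\theta_{\ast} : |\Delta^{n,k}| \to |\Delta^{m,l}|$ constructed in Example~\ref{Example:isovariant_topological_nk_simplex} is itself isovariant and that $G\Top^{\text{isov}}$ is cocomplete (colimits of isovariant diagrams are computed in $G\Top$ with the isotropy-preserving quotient topology), so the left Kan extension lives in $G\Top^{\text{isov}}$ as required. With this observation in place, the adjunction is a formal consequence of the coend/end calculus.
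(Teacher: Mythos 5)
Your argument is the standard nerve-realization adjunction: apply $\Hom_{G\Top^{\text{isov}}}(-,Y)$ to the coend presentation of $|X|$, turn the coend into an end, and recognize it as the set of natural transformations $X \Rightarrow \Sing^{\text{isov}}(Y)$. This is precisely the computation the paper refers to when it omits the proof with the remark that it ``follows essentially as in the classical case,'' so your proposal matches the intended approach; your closing observation that the coend must be taken in $G\Top^{\text{isov}}$ (so that the structural maps $\theta_{\ast}$ are genuinely isovariant) is also exactly the point the paper has already prepared for in \Cref{Example:isovariant_topological_nk_simplex} and in setting up $\Lan_{\catcal{Y}}|\Delta^{\bullet,\bullet}|$.
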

		
		\begin{example}
			$|\Delta^{2,1}|$ is isomorphic in $G\Top^{\text{isov}}$ to the $2$-dimensional closed disk $\mathbb{D}^2$ with the $C_2$-action given by reflection with respect to the $y$-axis.    
		\end{example}

		\subsection{Exact Cylinders}\label{Subsection:Exact cylinders}
		
		Let $\catname{A}$ be a small category, recall that a \emph{homotopical structure} on the presheaf category $\Psh(\catname{A})$ consist of the data of an exact cylinder $I$, together with a class of $I$-anodyne extensions \cite[Définition 1.3.6]{Cisinski2016LPCMDTH}. 
		In this section we will see that there is a natural way to define an exact cylinder for $\ssetisov$ in a explicit manner. 
		
		\begin{para} \normalfont
			Recall that $G\isovposets$  denotes the category of finite $G$-posets together with isovariant order-preserving maps. There is a functor  $\thk: G\Deltacat \lto G\isovposets$, named \emph{thickening functor}, defined on objects by 
			\[ [n]_k \longmapsto \thk \left( [n]_k \right)\coloneqq [n]_k\times \{0,1\}\]
			where $\{0,1\}$ is the finite $G$-poset with trivial action by $G$ and it is ordered by $0<1$. Thus, $[n]_k\times \{0,1\}$ has component-wise ordering. The action by $G$ on $\thk([n]_k)$ is made only on the first factor as it is trivial on the second.  The finite $G$-poset $\thk([n]_k)$ can be then seen as a thickening of $[n]_k$ as in the \Cref{fig:thickenning}. 
			
			If $\alpha: [n]_k \lto [m]_l$ is in $G\Deltacat$, the thickening morphism $\thk(\alpha): \thk([n]_k) \lto \thk([m]_l)$ is given by sending: 
			
			\begin{equation}\label{Equation:definition of thk}
				([j,g],\delta) \longmapsto (\alpha[j,g],\delta), \quad g \in G, \ \delta \in \{0,1\}.
			\end{equation}
			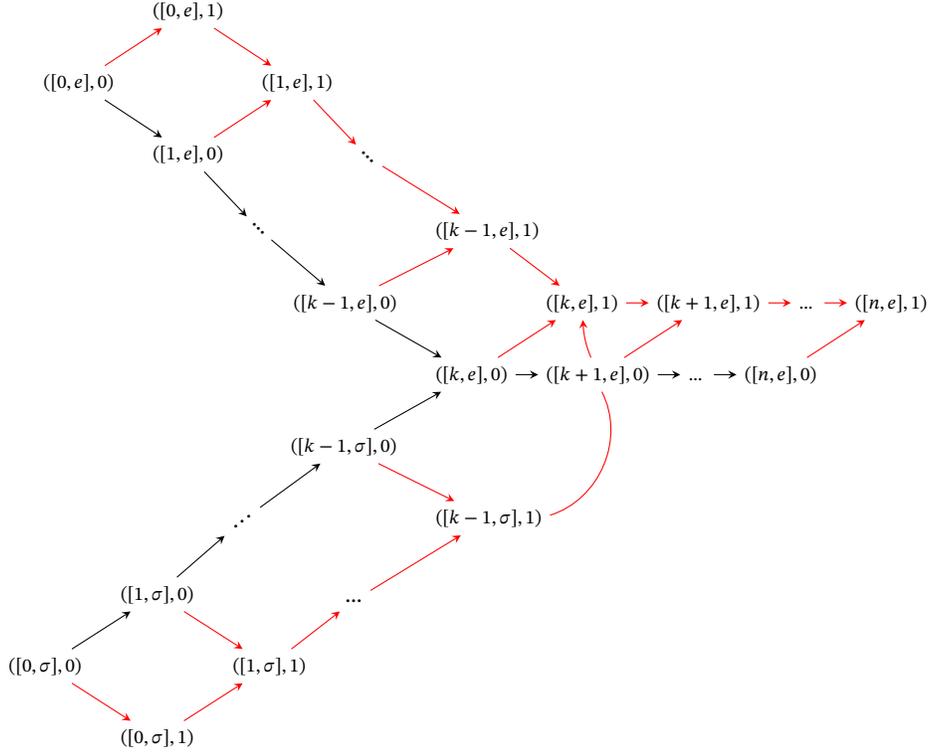
\begin{figure}[h]
				\caption{Thickening of $[n]_k$} \label{fig:thickenning}
				\begin{center}
					\begin{tikzpicture}[transform shape, align=center, 
						minimum size=3mm,
						>=stealth,
						bend angle=45,scale=1, every node/.style={scale=0.9}] 
						\tikzset{node distance = 0.5cm and 0.3cm}
						
						\node (100)  {\footnotesize $([1,e],0)$};
						\node (000) [above left =of 100] {\footnotesize $([0,e], 0)$} edge[->] (100); 
						\node (200) [below right =of 100, rotate=-45] {$\ldots$} ; 
						\node (300) [below right = of 200] {\footnotesize$([k-1,e],0)$}; 
						\node (40) [below right = of 300] {\footnotesize$([k,e],0)$}; 
						
						\node (50) [right =of 40] {\footnotesize$([k+1,e],0)$};
						\node (70) [right = of 50] {\footnotesize$\ldots \vphantom{()}$};
						\node (80) [right = of 70] {\footnotesize$([n,e],0)$};
						
						\node (310) [below left = of 40] {\footnotesize$([k-1,\sigma],0)$};
						\node (210) [below left =of 310] {$\udots \vphantom{()}$};
						\node (110) [below left = of 210] {\footnotesize$([1,\sigma],0)$};
						\node (010) [below left = of 110] {\footnotesize$([0,\sigma],0)$};
						
						\node (001) [above right =of 000] {\footnotesize$([0,e],1)$}; 
						\node (101) [above right =of 100] {\footnotesize$([1, e],1)$};
						\node (201) [below right =of 101,rotate=-45] {$\ldots$}; 
						\node (301) [above right =of 300] {\footnotesize$([k-1,e],1)$}; 
						\node (401) [above right =of 40] {\footnotesize $([k,e],1)$}; 
						
						\node (51) [right =of 401] {\footnotesize $([k+1,e],1)$};
						\node (71) [right =of 51] {\footnotesize $\ldots \vphantom{()}$};
						\node (81) [right =of 71] {\footnotesize $([n,e],1)$};
						
						\node (011) [below right =of 010] {\footnotesize$([0,\sigma],1)$}; 
						\node (111) [below right =of 110] {\footnotesize$([1,\sigma],1)$};
						\node (211) [above right =of 111] {$\ldots$}; 
						\node (311) [below right =of 310] {\footnotesize$([k-1,\sigma],1)$};

						\path[->] (100) edge (200);
						\path[->] (200) edge (300);
						
						\path[->] (300) edge (40);
						\path[->] (40) edge (50);
						\path[->] (50) edge (70);
						\path[->] (70) edge (80);
						
						\path[->] (010) edge (110);
						\path[->] (110) edge (210);				
						\path[->] (210) edge (310);
						\path[->] (310) edge (40);

						\path[red, ->] (011) edge  (111);
						\path[red, ->] (111) edge  (211);
						\path[red, ->]  (211) edge  (311);
						\path[red, -, bend right=50] (311)  edge (50);
						\path[red, ->, bend right=-10] (50) edge (401);
						
						\path[red, ->] (000) edge  (001);
						\path[red, ->] (100) edge  (101);
						\path[red, ->]  (300) edge  (301);
						\path[red, ->]  (40) edge (401);
						
						\path[red, ->] (401) edge  (51);
						\path[red, ->] (51) edge  (71);
						\path[red, ->] (71) edge  (81);
						
						\path[red, ->] (010) edge  (011);
						\path[red, ->] (110) edge  (111);
						\path[red, ->] (310) edge  (311);
						
						\path[red, ->]  (001) edge (101);
						\path[red, ->]  (101) edge (201);
						\path[red, ->]  (201) edge (301);
						\path[red, ->]  (301) edge (401);
						
						\path[red, ->] (50) edge  (51);
						\path[red, ->] (80) edge  (81);
					\end{tikzpicture}
				\end{center} 
			\end{figure}
			\noindent Clearly $\thk(\alpha)$ is well defined and moreover for if $[j,g] \in [n]_k$ with $g\in G$ , $0\leq j\leq n$ and $\delta \in \{0,1\}$  we have 
			\begin{align*}
				\thk(\alpha)	(g' \cdot([j,g],\delta)) &= \thk(\alpha)([j,g'g], \delta) \qquad &{\text{Action}} \\ &= (\alpha([j,g'g]),\delta)  \qquad  &\ref{Equation:definition of thk} \\  &= g'\cdot (\alpha([j,g]), \delta) \qquad &{\text{Equivariance of } \alpha} \\ &= g'\cdot \thk(\alpha)([j,g],\delta).
			\end{align*}
			Thus $\thk(\alpha)$ is equivariant. Similar easy calculations show that $\thk(\alpha)$ is isovariant and order-preserving.  
		\end{para} 
		For any object $[n]_k$ in $G\Deltacat$ we let $I^{n,k}: G\Deltacat^{\text{op}}\lto \Set$ be the restriction to $G\Deltacat^{\text{op}}$ of the functor represented by $\thk[n]_k$:
		
		\[\begin{tikzcd}
			{G\Deltacat^{\text{op}}} & {G\isovposets^{\text{op}}} \\
			& \Set
			\arrow["I^{n,k}"', from=1-1, to=2-2]
			\arrow[hook, from=1-1, to=1-2]
			\arrow["{\Hom_{G\isovposets}(-,\thk[n]_k)}", from=1-2, to=2-2]
		\end{tikzcd}\]
%
%
%
%
		This assigning extends to a functor $I:G\Deltacat \lto \ssetisov$ which is defined on objects by sending $[n]_k$ to $I^{n,k}$. Thus, it makes sense to consider the left Kan extension $\interval$ of $I$ along the Yoneda embedding $\catcal{Y}$: 
		
		\[ 
		\begin{tikzcd}[row sep=large]
			G\Deltacat \arrow[rr,"I"] \arrow[dr,"\catcal{Y}",hookrightarrow,swap] &&\ssetisov \\ &\ssetisov \arrow[ur,"\interval \coloneqq \Lan_{\catcal{Y}}(I)",dashed,swap]
		\end{tikzcd}
		\]
		
		\begin{proposition}\label{prop: left kan extension of R is the cylinder object for issov spaces}
			The functor $\interval$ defined as above is a functorial exact cylinder for the category $\ssetisov$. 
		\end{proposition}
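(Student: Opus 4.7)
We must check that $\interval$ satisfies the axioms of a functorial exact cylinder of \cite[Définition 1.3.6]{Cisinski2016LPCMDTH}: it should come with natural transformations $\partial^\varepsilon \colon \idd \Rightarrow \interval$ (for $\varepsilon \in \{0,1\}$) and $\sigma \colon \interval \Rightarrow \idd$ with $\sigma \circ \partial^\varepsilon = \idd$; it should preserve small colimits; the map $(\partial^0, \partial^1) \colon \idd \sqcup \idd \Rightarrow \interval$ should be a monomorphism; and for every monomorphism $j \colon X \hookrightarrow Y$ the square formed by $\idd \sqcup \idd$ and $\interval$ applied to $j$ should be cartesian.

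The natural transformations are built at the representable level and transported by the universal property of the Kan extension. For each $[n]_k$ there are canonical isovariant order-preserving maps $\eta^\varepsilon \colon [n]_k \to \thk[n]_k$, $x \mapsto (x,\varepsilon)$, and $\pi \colon \thk[n]_k \to [n]_k$, $(x,\delta) \mapsto x$, natural in $[n]_k$ and satisfying $\pi \circ \eta^\varepsilon = \idd$. Via the Yoneda embedding they induce natural transformations $\catcal{Y} \Rightarrow I$ and $I \Rightarrow \catcal{Y}$, and left Kan extension yields the required $\partial^\varepsilon \colon \idd \Rightarrow \interval$ and $\sigma \colon \interval \Rightarrow \idd$ with $\sigma \circ \partial^\varepsilon = \idd$. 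Colimit-preservation of $\interval$ is automatic, since $\Lan_{\catcal{Y}} I$ admits the right adjoint $R(Y)([n]_k) = \Hom_{\ssetisov}(I^{n,k}, Y)$ because $\ssetisov$ is cocomplete.

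For the monomorphism properties I would first treat representables by direct pointwise analysis. Since the second factor of $\thk[n]_k = [n]_k \times \{0,1\}$ carries a trivial $G$-action, an isovariant order-preserving map $[m]_l \to \thk[n]_k$ decomposes uniquely as a pair $(f_1, f_2)$ with $f_1$ isovariant order-preserving and $f_2$ order-preserving and $G$-invariant, giving the natural bijection
\begin{equation*}
(\interval \Delta^{n,k})_{m,l} \;\cong\; \Hom_{G\Deltacat}([m]_l, [n]_k) \times \Hom_{\catname{Pos}}([m], \{0,1\}),
\end{equation*}
under which $\partial^\varepsilon \Delta^{n,k}$ corresponds to the subset whose second component is constant equal to $\varepsilon$. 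These two subsets are disjoint and together inject, yielding the monomorphism $\Delta^{n,k} \sqcup \Delta^{n,k} \hookrightarrow \interval \Delta^{n,k}$. A cellular induction along the skeletal filtration provided by \Cref{Prop: The category Gdelta is einlenber-Zilber} then extends the property to arbitrary isovariant simplicial sets.

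The main obstacle is the cartesian square condition for a general monomorphism $j \colon X \hookrightarrow Y$. I would again reduce, via the cellular presentation of normal monomorphisms, to the case of boundary inclusions $\partialbf \Delta^{n,k} \hookrightarrow \Delta^{n,k}$, where the product description of $\interval \Delta^{n,k}$ makes the claim transparent: an $(m,l)$-simplex of $\interval \Delta^{n,k}$ whose cylinder coordinate is constant at $\varepsilon$ and whose shadow factors through $\partialbf \Delta^{n,k}$ lifts uniquely to an $(m,l)$-simplex of $\partial^\varepsilon \partialbf \Delta^{n,k}$, which is precisely the required cartesianness. Propagating this from the generating boundary inclusions to all monomorphisms uses colimit-preservation of $\interval$ together with the fact that in the presheaf topos $\ssetisov$ pullbacks commute with the pushouts along monomorphisms that assemble the skeletal filtration, completing the verification.
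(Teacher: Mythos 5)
Your checklist omits one clause that Cisinski's definition treats separately and that the paper devotes the bulk of its proof to: the functor $\interval$ must \emph{preserve monomorphisms}, not merely colimits. The paper reduces this to showing that the colimit functor indexed by the comma category $([n]_k/\thk)$ preserves monos, which it in turn reduces (via a general criterion about quadrable cospans) to \Cref{Proposition:comma category nk/thk is quadrable}. You do not address this property at all, and your cellular inductions implicitly rely on it.

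The more serious gap is in your propagation steps. You reduce both the cylinder condition ($X\sqcup X\linj\interval X$) and the cartesian-square condition for a general mono $j\colon X\linj Y$ to the boundary inclusions $\partialbf\Delta^{n,k}\linj\Delta^{n,k}$ via the skeletal filtration. But in $\ssetisov$ the boundary inclusions generate only the \emph{normal} monomorphisms, not all monomorphisms: $G\Deltacat$ has nontrivial automorphisms (the swapping maps $\sigma$), so non-normal isovariant simplicial sets exist, and the skeletal filtration of a non-normal presheaf involves coequalizers by automorphism groups rather than clean pushouts along boundary inclusions. Your reduction therefore leaves exactly the non-normal monos unhandled. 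The paper avoids this entirely: its proof of \Cref{lemma: cylinder functor and pullbacks} is a short direct pointwise argument using the retraction $\rho$ and its naturality, valid for arbitrary monomorphisms with no cellular decomposition.

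What is frustrating is that your key observation already contains a complete and strictly simpler proof. The bijection $(\interval\Delta^{n,k})_{m,l}\cong\Hom_{G\Deltacat}([m]_l,[n]_k)\times\Hom_{\catname{Pos}}([m],\{0,1\})$ is natural in both $[m]_l$ and $[n]_k$ (under $\thk(\alpha)$ the first factor transforms by $\alpha\circ-$ and the second is unchanged), so it identifies $I\cong\catcal{Y}\times J$ where $J$ is the isovariant simplicial set $J([m]_l)=\Hom_{\catname{Pos}}([m],\{0,1\})$. Since $\interval$ and $-\times J$ both preserve colimits and agree on representables, $\interval X\cong X\times J$ naturally for \emph{every} $X$. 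Every axiom is then immediate: $-\times J$ preserves colimits and pointwise monomorphisms; $\partial^X_\varepsilon$ is $\idd_X\times\mathrm{const}_\varepsilon$, a split mono, and the two have pointwise-disjoint images; and for any mono $j\colon X\linj Y$ the square is cartesian because pointwise the preimage of $X\times\{\mathrm{const}_\varepsilon\}$ under $j\times\idd_J$ is exactly $X\times\{\mathrm{const}_\varepsilon\}$. This bypasses both the quadrable-cospan machinery of the paper and your cellular inductions, and handles all monomorphisms without any appeal to normality. I would rewrite your proof along these lines rather than attempting to patch the skeletal-filtration arguments.
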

		\noindent We need then to show that there exists natural transformations 
		\[ (\partial_0, \partial_1): \idd \coprod \idd \lto \interval\] 
		\[ \rho: \interval \lto \idd\]
		such that for any isovariant simplicial set $X$, the quadruple $(\interval X, \partial^X_0, \partial^X_1, \rho^X)$, obtained from $\interval$ and the natural transformations $\partial_0, \partial_1, \rho$, is a cylinder of $X$ and that it satisfies the properties: 
		
		\begin{enumerate}
			\item The functor $\interval$ commutes with small colimits and preserves monomorphisms. 
			\item For each monomorphism $\iota: X \linj Y$ in $\ssetisov$, the commutative squares 
			\[ 
			\begin{tikzcd}
				X \arrow[r,"\iota", hookrightarrow] \arrow[d,"\partial^X_{\varepsilon}",swap] &Y \arrow[d,"\partial^Y_{\varepsilon}"] \\ \interval X \arrow[r,"\interval(\iota)", swap] & \interval Y
			\end{tikzcd}
			\]
			is a pullback square for $\varepsilon=0,1$. 
		\end{enumerate}

		Before getting into the proof we will first show that we have already cylinder objects for the representables $\Delta^{n,k}$ for any $[n]_k \in G\Deltacat$.
		
		\begin{lemma}
			For each $[n]_k \in G\Deltacat$ there are isovariant simplicial set maps 
			\begin{gather*} 
				\partial_{\varepsilon}^{n,k}: \Delta^{n,k}\lto I^{n,k}, \quad \varepsilon=0,1 \\ 
				\rho^{n,k}: I^{n,k} \lto \Delta^{n,k}
			\end{gather*}
			such that $\rho^{n,k} \circ \partial^{n,k}_{\varepsilon}= \idd$. Moreover, $\partial_{\varepsilon}^{n,k}$ preserves monomorphisms in the sense that if $\alpha \in (\Delta^{n,k})_ {m,l}$ is a monomorphism then $(\partial_{\varepsilon}^{n,k})_{m,l}(\alpha)$ is a monomorphism.
		\end{lemma}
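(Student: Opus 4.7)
I will construct the three maps by first exhibiting the canonical section and retraction between $[n]_k$ and its thickening $\thk[n]_k$ in $G\isovposets$, then transporting them to the presheaf category by post-composition. In $G\isovposets$ define
\begin{align*}
\partial_\varepsilon &: [n]_k \lto \thk[n]_k, \quad [j,g] \longmapsto ([j,g],\varepsilon), \quad \varepsilon = 0,1, \\
\rho &: \thk[n]_k \lto [n]_k, \quad ([j,g],\varepsilon) \longmapsto [j,g].
\end{align*}
Each is order-preserving by the definition of the product order on $\thk[n]_k$, equivariant because $G$ acts trivially on the $\{0,1\}$-factor, and isovariant since the isotropy of $([j,g],\varepsilon)$ coincides with that of $[j,g]$. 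Tautologically $\rho \circ \partial_\varepsilon = \idd_{[n]_k}$.

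\textbf{From posets to presheaves.} By construction $I^{n,k}$ is $\Hom_{G\isovposets}(-, \thk[n]_k)$ restricted along the inclusion $G\Deltacat \hookrightarrow G\isovposets$, while $\Delta^{n,k} = \Hom_{G\Deltacat}(-,[n]_k)$. Post-composition therefore defines, componentwise in $[m]_l \in G\Deltacat$,
\[
(\partial_\varepsilon^{n,k})_{m,l}(\alpha) = \partial_\varepsilon \circ \alpha, \qquad (\rho^{n,k})_{m,l}(\beta) = \rho \circ \beta,
\]
for $\alpha : [m]_l \to [n]_k$ in $G\Deltacat$ and $\beta : [m]_l \to \thk[n]_k$ in $G\isovposets$. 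The first map is well-defined because $\partial_\varepsilon \circ \alpha$ is an isovariant order-preserving map into $\thk[n]_k$; the second is well-defined because $\rho \circ \beta$ is an isovariant order-preserving map into $[n]_k$, hence a morphism in $G\Deltacat$. Naturality in $[m]_l$ is automatic, as the presheaf structure on both sides is given by pre-composition, which commutes with post-composition.

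\textbf{Composition and monomorphisms.} The identity $\rho^{n,k} \circ \partial_\varepsilon^{n,k} = \idd_{\Delta^{n,k}}$ holds component-wise because $\rho \circ \partial_\varepsilon = \idd_{[n]_k}$. For the second claim, recall that monomorphisms in $G\isovposets$ coincide with injective maps (by the usual argument using maps out of a singleton $G$-poset with prescribed isotropy). If $\alpha$ is a monomorphism in $G\Deltacat$ it is injective, and since $\partial_\varepsilon$ is plainly injective, the composite $\partial_\varepsilon \circ \alpha$ is injective, hence a monomorphism in $G\isovposets$.

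\textbf{Main obstacle.} There is essentially no real obstacle once the correct formulation is in hand: the construction is forced by the observation that a morphism $\Delta^{n,k} \to I^{n,k}$ corresponds to an element of $I^{n,k}([n]_k) = \Hom_{G\isovposets}([n]_k,\thk[n]_k)$, and both of the candidates $\partial_\varepsilon$ are obvious. The only point requiring mild care is verifying the strict isotropy condition for $\partial_\varepsilon$ and $\rho$, which relies squarely on the $G$-action on the $\{0,1\}$-factor being trivial.
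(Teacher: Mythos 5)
Your proof is correct and follows essentially the same construction as the paper: the maps $\partial_\varepsilon^{n,k}$ and $\rho^{n,k}$ are post-composition with the evident section/retraction pair $\partial_\varepsilon : [n]_k \to \thk[n]_k$ and $\rho : \thk[n]_k \to [n]_k$ in $G\isovposets$. Your presentation is slightly more economical in that it observes naturality is automatic (post-composition commutes with the presheaf action by pre-composition) and that $\partial_\varepsilon$ is injective, whereas the paper writes out the naturality square and the cancellation argument componentwise, but the underlying argument is identical.
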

		
		\begin{proof}
			Let $[m]_l \in G\Deltacat$ and define $\left(\partial_{\varepsilon}^{n,k}\right)_{m,l}: (\Delta^{n,k})_{m,l} \lto I^{n,k}([m]_l)$ by sending
			\[ \alpha \longmapsto (\alpha(-),\varepsilon)\]
			Explicitly, 
			\[
			\left(\partial_{\varepsilon}^{n,k}\right)_{m,l}(\alpha)([j,g])= (\alpha[j,g], \varepsilon)
				\]
				where $\varepsilon = 0,1$. We must prove that $\partial_{\varepsilon}^{n,k}$ is a natural transformation. Let $f: [m]_l \lto [a]_b$ be a morphism in $G\Deltacat$, then the diagram 
				
				\begin{equation}\label{eq: partial^nk is natural}
					\begin{tikzcd}[column sep=large, row sep=large]
						(\Delta^{n,k})_{a,b} \arrow[r,"(\partial_{\varepsilon}^{n,k})_{a,b}"]  \arrow[d,"\Delta^{n,k}(f)",swap]& I^{n,k}([a]_b) \arrow[d,"I^{n,k}(f)"] \\ (\Delta^{n,k})_{m,l} \arrow[r,"(\partial_{\varepsilon}^{n,k})_{m,l}",swap] &I^{n,k}([m]_l)
					\end{tikzcd}
				\end{equation}
				commutes for if $\alpha \in (\Delta^{n,k})_{a,b}$ we have that
				
				\begin{align*}
					\left(I^{n,k}(f) \right) \left((\partial_{\varepsilon}^{n,k})_{a,b}(\alpha)\right) &= \left((\partial_{\varepsilon}^{n,k})_{a,b}(\alpha)\right) \circ f \\ &= (\alpha(-),\varepsilon) \circ f 
				\end{align*}
				but $(\alpha(-),\varepsilon ) \circ f = ((\alpha \circ f)(-), \varepsilon)$ so that
				
				\begin{align*}
					((\alpha \circ f)(-), \varepsilon) = (\partial_{\varepsilon}^{n,k})_{m,l}(\alpha \circ f) = (\partial_{\varepsilon}^{n,k})_{m,l}\left((\Delta^{n,k}(f))(\alpha)\right)
				\end{align*}
				we conclude that \[ I^{n,k}(f) \circ (\partial_{\varepsilon}^{n,k})_{a,b} = (\partial_{\varepsilon}^{n,k})_{m,l} \circ \Delta^{n,k}(f)\]
				and thus $\ref{eq: partial^nk is natural}$ is indeed commutative for $\varepsilon=0,1$. To construct $\rho^{n,k}: I^{n,k} \lto \Delta^{n,k}$ we proceed in a similar way as for $\partial_{\varepsilon}^{n,k}$: Define $\rho_{m,l}^{n,k}: I^{n,k}([m]_l) \lto (\Delta^{n,k})_{m,l}$ by sending
				
				\[ \alpha \lto \pr \circ \alpha\]
				where $\alpha \in R_{n,k}([m]_l)$ and $\pr: \thk([n]_k)\lto [n]_k$ is the projection that sends: 
				\[
				([j,g],\varepsilon) \longmapsto [j,g]
				\]
				An easy calculation similar to the one we did for $\partial_{\varepsilon}^{n,k}$ shows that for any $f: [m]_l \lto [a]_b$ the diagram 
				
				\begin{equation}\label{eq: rho^nk is natural}
					\begin{tikzcd}[column sep=huge, row sep=huge]
						I^{n,k}([a]_b) \arrow[r,"(\rho^{n,k})_{a,b}"]  \arrow[d,"I^{n,k}(f)",swap]& (\Delta^{n,k})_{a,b} \arrow[d,"\Delta^{n,k}(f)"] \\ I^{n,k}([m]_l) \arrow[r,"(\rho^{n,k})_{m,l}",swap] &(\Delta^{n,k})_{m,l}
					\end{tikzcd}
				\end{equation}
				commutes, so that $\rho^{n,k}$ is a well defined isovariant simplicial set map. Moreover notice that for $\alpha \in (\Delta^{n,k})_{m,l}$ we have
				
				\begin{align*}
					\rho^{n,k}_{m,l}\left( (\partial^{n,k}_{\varepsilon})_{m,l} ( \alpha)\right) &= \rho^{n,k}_{m,l} (\alpha(-),\varepsilon)\\ &= \pr \circ (\alpha(-), \varepsilon) = \alpha
				\end{align*}
				so that $\rho^{n,k} \circ \partial^{n,k}_{\varepsilon}= \idd$. The second part of the lemma is a direct consequence from the definition of $\partial^{n,k}_{\varepsilon}$ since if $\alpha \in (\Delta^{n,k})_{m,l}$ is a monomorphism and  $u,v: Z \lto [m]_l$ are two morphisms in $G\isovposets$ such that
				\[ \partial^{n,k}_{\varepsilon}(\alpha) \circ u = \partial_{\varepsilon}^{n,k}(\alpha) \circ v,\]
				by definition of $\partial^{n,k}_{\varepsilon}$ last equality is equivalent to 
				
				\[ ((\alpha \circ u)(-), \varepsilon) = ((\alpha \circ v)(-), \varepsilon)\]
				and since $\alpha$ is a monomorphism we then have that $u=v$.
			\end{proof}
			
			\begin{para}\label{para:cylinder for Isov_G}\normalfont
				Let $X$ be an isovariant simplicial set. Notice that $X$ is in particular a presheaf of sets over the category $G\Deltacat$ so that by Density theorem \cite[\nopp 6.2.17]{leinster2014basic} $X$ can be written as a colimit of representables: 
				\[ 
				X \simeq \colim \left(
				\begin{tikzcd}
					{G\Deltacat \downarrow X} \arrow[r,"U"] & G\Deltacat \arrow[r,"\catcal{Y}"] &\ssetisov
				\end{tikzcd}\right)
				\]
				where $U$ denotes the forgetful functor from the slice category $G\Deltacat \downarrow X$ to $G\Deltacat$. Similarly $\interval X$ was defined by a pointwise left Kan extension so that it can be written as the following colimit \cite[\nopp 1.2.6 ]{riehl_2014}: 
				
				\[ 
				\interval X = \colim \left(
				\begin{tikzcd}
					{G\Deltacat \downarrow X} \arrow[r,"U"] & G\Deltacat \arrow[r,"I"] &\ssetisov
				\end{tikzcd}\right)
				\]
				The natural maps $\partial_{\varepsilon}^{n,k}: \Delta^{n,k} \lto I^{n,k}$ and $\rho^{n,k}: I^{n,k} \lto \Delta^{n,k}$ then define a morphisms of diagrams: 
				
				\[
				\begin{tikzcd}[column sep=huge]
					G\Deltacat \downarrow X
					\arrow[bend left=50]{r}[name=U,label=above:$\catcal{Y} \circ U$]{}
					\arrow[bend right=50]{r}[name=D,label=below:$I \circ U$]{} &
					\ssetisov
					\arrow[shorten <=10pt,shorten >=10pt,Rightarrow,to path={(U) -- node[label=right:$\partial_{\varepsilon}^{\bullet, \bullet}$] {} (D)}]{}
				\end{tikzcd} \quad \text{and} \quad 
				\begin{tikzcd}[column sep=huge]
					G\Deltacat \downarrow X
					\arrow[bend left=50]{r}[name=U,label=above:$\catcal{Y} \circ U$]{}
					\arrow[bend right=50]{r}[name=D,label=below:$I \circ U$]{} &
					\ssetisov
					\arrow[shorten <=10pt,shorten >=10pt,Rightarrow,to path={(D) -- node[label=right:$\rho^{\bullet, \bullet}$] {} (U)}]{}
				\end{tikzcd}
				\]
				The universal property of colimits together with the functoriality of the colimit functor $\colim: [G\Deltacat, \ssetisov] \lto \ssetisov$ guarantee the existance of unique morphisms
				 \cite[\nopp {\stackstag{002D}} ]{stacks-project}
				
				\[ \begin{tikzcd}
					X \simeq \colim_{G\Deltacat \downarrow X}(\catcal{Y} \circ U) \arrow[r,"\partial_{\varepsilon}^X"] & \colim_{G\Deltacat \downarrow X} (I \circ U) \simeq \interval X
				\end{tikzcd}   \]
				and
				\[ \begin{tikzcd}
					\interval X \simeq \colim_{G\Deltacat \downarrow X}(I \circ U) \arrow[r,"\rho^X"] & \colim_{G\Deltacat \downarrow X} (\catcal{Y} \circ U) \simeq X
				\end{tikzcd}   \]
				such that the diagrams
				
				\[
				\begin{tikzcd}
					\Delta^{n,k} \arrow[r] \arrow[d,"\partial^{n,k}",swap] &X \arrow[d,"\partial^X"] \\ I^{n,k} \arrow[r] &\interval X
				\end{tikzcd} \quad 
				\begin{tikzcd}
					I^{n,k} \arrow[r] \arrow[d,"\rho^{n,k}",swap] &\interval X \arrow[d,"\rho^X"] \\ \Delta^{n,k} \arrow[r] &X
				\end{tikzcd}
				\]
				both commute. This implies that $\rho^X \circ \partial^X = \idd_X$ as $\rho^{n,k} \circ \partial^{n,k}=\idd_{\Delta^{n,k}}$. Notice then that for any isovariant simplicial set $X$, the quadruple $(\interval X, \partial_0^X, \partial^X_1,\rho^X)$ constructed as above is a cylinder for $X$, functoriality on $X$ is clear so that $(\interval, \partial_0, \partial_1,\rho)$ defines a functorial cylinder for $\ssetisov$ which achieves the first part of the proof of Proposition \ref{prop: left kan extension of R is the cylinder object for issov spaces}. It remains to show that such functorial cylinder is exact. We will first prove a few technical lemmas that will justify why $\interval$ preserves monomorphisms. 
			\end{para}
			
			\begin{remark}\label{Para:jumps and preservation of monos} \normalfont
				Let $X$ be an isovariant simplicial set. We have seen that $\interval X$ is defined by: 
				\[ 
				\interval X = \colim \left(
				\begin{tikzcd}
					{G\Deltacat \downarrow X} \arrow[r,"U"] & G\Deltacat \arrow[r,"I"] &\ssetisov
				\end{tikzcd}\right)
				\]
				Therefore, for any $[n]_k \in G\Deltacat$ we have 
				\[ 
				\left(\interval X\right)_{n,k} = \colim \left(
				\begin{tikzcd}
					{G\Deltacat \downarrow X} \arrow[r,"U"] & G\Deltacat \arrow[r,"I"] &\ssetisov \arrow[r,"\text{ev}_{[n]_k}"] &\Set
				\end{tikzcd}\right)
				\]
Equivalently, we can also write (\cite[\nopp 3.7.2]{borceux1994handbook})
				
				\[
				(\interval X)_{n,k} = \colim \left(
				\begin{tikzcd}
					\left( [n]_k / \thk\right)^{\op} \arrow[r,"U^{\ast}"] &G\Deltacat^{\op} \arrow[r,"X"] &\Set 
				\end{tikzcd} \right)
				\]
				where $\left( [n]_k / \thk \right)$ denotes the comma category with objects given by morphisms of isovariant $G$-posets $[n]_k \lto \thk[m]_l$. Morphisms in this category are defined in the obvious way and, $U^{\ast}$ denotes the forgetful functor. Since this is natural in $X$ we can then write $(\interval -)_{n,k}$ as the composite: 
				
				\[ 
				(\interval -)_{n,k}: \begin{tikzcd}
					\ssetisov \arrow[r,"-\circ U^{\ast}"] &\Fun[\left([n]_k / \thk \right)^{\op},\Set] \arrow[r,"\colim"] &\Set
				\end{tikzcd} 
				\]
			\end{remark}
			The forgetful functor $U^{\ast}$ has a right adjoint and in particular it preserves finite limits and thus monomorphisms. By the above, the question about whether $\interval$ preserves monomorphisms reduces then to show that for any $[n]_k$ the colimit functor: 
			
			\begin{equation}\label{Equation:colimit functor in slice category} \begin{tikzcd}
					\colim: \Fun[\left([n]_k / \thk \right)^{\op},\Set] \arrow[r] &\Set
				\end{tikzcd}
			\end{equation}
			preserves monomorphisms. Hereafter we will first characterize the categories $J$ for which monomorphisms of diagrams of sets of shape $J$ are preserved by the colimit functor. The next lemma may be known in the literature but as we don't know a reference we will add the proof.  
			
			\begin{lemma}\label{Lemma:characterization colimit preserves monos}
				Let $J$ be a small category. Then the colimit functor $\colim: \Psh(J) \lto \Set$ preserves monomorphisms if and only if every cospan in $J$ can be completed to a commutative diagram, \ie any diagram $\begin{tikzcd}
					a \arrow[r,"f"] &c & b \arrow[l,"g",swap]
				\end{tikzcd}$ in $J$ is quadrable in the sense that it admits a cone $(d,p_1,p_2)$ such that the square below commutes.
				\[
				\begin{tikzcd}
					d \arrow[r,"p_1"] \arrow[d,"p_2",swap] & a \arrow[d,"f"] \\ b \arrow[r,"g",swap] &c
				\end{tikzcd}
				\]
			\end{lemma}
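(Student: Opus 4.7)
I would prove the two directions separately.

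For the easy direction ($\Leftarrow$), the plan is to upgrade the usual zig-zag description of colimits in $\Set$ to a simpler ``span'' description under the cospan-filling hypothesis. Concretely, for a presheaf $F : J^{\op} \to \Set$, the set $\colim F$ is the quotient of $\coprod_{j \in J} F(j)$ by the equivalence relation generated by $(x,j) \sim (F(f)(x), k)$ whenever $f : k \to j$ in $J$. I claim that, assuming every cospan in $J$ is fillable, two elements $x \in F(j)$ and $y \in F(k)$ become equal in $\colim F$ if and only if there exists an object $l \in J$ together with morphisms $p : l \to j$ and $q : l \to k$ such that $F(p)(x) = F(q)(y)$ in $F(l)$. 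The nontrivial content is the ``only if,'' which I would prove by induction on the length of a zig-zag connecting $x$ and $y$. The induction step is the key use of the hypothesis: whenever a zig-zag contains a peak $j_{i-1} \to j_i \leftarrow j_{i+1}$ (a cospan in $J$), fill it to a commutative square with apex $l$, and observe that, by contravariance of $F$, the equality in $F(l)$ that the square forces transforms the peak into the valley $j_{i-1} \leftarrow l \to j_{i+1}$, without losing the compatibility of the elements involved. After replacing all peaks by valleys, any two consecutive arrows pointing the same way can be composed, and one collapses the zig-zag to a single span.

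With this simplified description, preservation of monomorphisms by $\colim$ is immediate: given a mono $\alpha : F \hookrightarrow G$ in $\Psh(J)$ — recall monomorphisms in presheaf categories are exactly the pointwise injections — and elements $x \in F(j)$, $y \in F(k)$ with $\alpha_j(x)$ and $\alpha_k(y)$ equal in $\colim G$, the span description provides $p : l \to j$ and $q : l \to k$ with $G(p)(\alpha_j(x)) = G(q)(\alpha_k(y))$. By naturality of $\alpha$, this rewrites as $\alpha_l(F(p)(x)) = \alpha_l(F(q)(y))$, and injectivity of $\alpha_l$ forces $F(p)(x) = F(q)(y)$, whence $x$ and $y$ are already equal in $\colim F$.

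For the converse ($\Rightarrow$), I would argue contrapositively by exhibiting a counterexample. Suppose $u : a \to c$ and $v : b \to c$ form a cospan in $J$ that admits no filling. Define a presheaf $G$ whose values at $a$ and $b$ are singletons and whose value at $c$ has two distinct elements $\ast_1, \ast_2$, both collapsed to the unique point of $G(a)$ by $G(u)$ and to the unique point of $G(b)$ by $G(v)$; extend to the rest of $J$ in the minimal way consistent with functoriality (for instance by taking $G$ to be represented, up to an identification, by the data above). In $\colim G$ the two elements $\ast_1, \ast_2$ get identified via their common images in $G(a)$ (or $G(b)$). Now let $F \hookrightarrow G$ be the subpresheaf obtained by removing the bridging object, i.e.\ setting $F(c) = \emptyset$ and leaving the other values unchanged; this is still a well-defined presheaf since the transition maps out of $c$ can land in the empty set only if the source is empty, and by the unfillability of the cospan no zig-zag in $F$ can connect the point of $F(a)$ to the point of $F(b)$. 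Consequently $\colim F$ has (at least) two elements while they are identified in $\colim G$, so $\colim$ fails to preserve the mono $F \hookrightarrow G$.

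The main obstacle is the inductive collapse of zig-zags in the backward direction: one must be careful with the contravariance of $F$ when translating the commutative square produced by cospan-filling into the required equality of elements in $F(l)$, and the induction must handle the cases where the zig-zag has arrows in the same direction at its extremities (where one composes rather than fills). The construction of the counterexample is also delicate and must be checked to ensure that the subpresheaf $F$ is genuinely a subpresheaf of $G$ and that no zig-zag in $J$ outside the unfillable cospan sneaks in an identification.
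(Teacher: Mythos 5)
Your forward direction (cospans fillable $\Longrightarrow$ $\colim$ preserves monos) is essentially the paper's argument: reduce a zig-zag to a single span by repeatedly filling cospans (the only place the hypothesis enters), then conclude by naturality of the monomorphism and its pointwise injectivity. That half is fine.

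The converse has a genuine gap. You want $G$ with $G(a),G(b)$ singletons, $G(c)$ a two-point set, and $F\subset G$ with $F(c)=\emptyset$; but the extension of $G$ (and of $F$) to the rest of $J$ is not canonical, and the most economical choice fails. Concretely, suppose $J$ contains an object $d$ with arrows $p:d\to a$ and $q:d\to b$ whose two composites into $c$ disagree — this does not contradict unfillability of the cospan, it merely witnesses that this particular square fails to commute. Contravariance forces the nonempty $F(a)$ to map into $F(d)$, so $F(d)\ne\emptyset$; if $F(d)$ is a singleton (the most natural reading of a ``minimal extension''), the span through $d$ already identifies the point of $F(a)$ with the point of $F(b)$ in $\colim F$, collapsing it to a singleton. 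Then $\colim F\to\colim G$ is a bijection of one-element sets and you have produced no counterexample at all. You flag these issues as ``to be checked,'' but there is no uniform recipe making the check succeed. The paper avoids the extension problem entirely: take $G=\Hom_J(-,c)$ and $F$ the image of $\Hom_J(-,a)\coprod\Hom_J(-,b)\to\Hom_J(-,c)$ under the epi-mono factorization in $\Psh(J)$. The content is that unfillability of the cospan is precisely what makes the two summands of this image disjoint, so $F$ decomposes as a coproduct of two connected presheaves (each a quotient of a representable); hence $\colim F$ has exactly two elements while $\colim\Hom_J(-,c)$ is a singleton. Because $F$ is carved out of data already present in $J$ rather than built by hand, it assigns $F(d)$ the right size automatically and works for arbitrary ambient $J$.
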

			
			\begin{proof}
				We will show the sufficient condition by contrareciprocal so suppose that $\begin{tikzcd}[column sep=16pt]
					a \arrow[r,"f"] &c & b \arrow[l,"g",swap] \end{tikzcd}$ is a cospan in $J$ that is not quadrable. We will see that there are monomorphisms that cannot be preserved by the colimit functor. The morphisms $f$ and $g$ in $J$ induce a canonical morphism of presheaves: 
				\[ \begin{tikzcd}\Hom_J(-,a) \coprod \Hom_J(-,b) \arrow[rr,"f_{\ast} \coprod g_{\ast}"] &&\Hom_J(-,c)   \end{tikzcd} 
				\] 
				Since the category of presheaves $\Psh(J)$ admit epi/mono factorizations, the last morphism can be factored through some presheaf $Z$ as follows: 
				
				\[\begin{tikzcd}
					{\Hom_J(-,a) \coprod \Hom_J(-,b)} && {\Hom_J(-,c)} \\
					& Z
					\arrow["s"', two heads, from=1-1, to=2-2]
					\arrow["r"', hook, from=2-2, to=1-3]
					\arrow["{f_{\ast}\coprod g_{\ast}}", from=1-1, to=1-3]
				\end{tikzcd}\]
				where $r$ is an epimorphism and $r$ a monomorphism. We claim that $r$ is a monomorphism of presheaves that is not preserved by the colimit functor, \ie $\colim(r): \colim_{J^{\op}} Z \lto \colim_{J^{\op}} \Hom_J(-,c)$ is not a monomorphism. Indeed, let us first notice that the images of $f_{\ast}$ and $g_{\ast}$ in $\Hom_J(-,c)$ must be disjoint, otherwise there will exist some $\alpha: k \lto c$ and $\beta: k\lto a$ and $\gamma: k \lto b$ morphisms in $J$ such that 
				
				\[f\circ \beta = f_{\ast}(\beta) = \alpha = g_{\ast}(\gamma) = g \circ \gamma\]
				That is, the square 
				\[\begin{tikzcd}
					k & a \\
					b & c
					\arrow["f", from=1-2, to=2-2]
					\arrow["g"', from=2-1, to=2-2]
					\arrow["\gamma"', from=1-1, to=2-1]
					\arrow["\beta", from=1-1, to=1-2]
					\arrow["\alpha", from=1-1, to=2-2]
				\end{tikzcd}\]
				is commutative. Such thing is not possible as we have supposed that the cospan determined by $f$ and $g$ is not quadrable. As a consequence, $Z$ is a coproduct of presheaves $Z_1$ and $Z_2$ that are themselves quotients of $\Hom_J(-,a)$ and $\Hom_J(-,b)$ respectively. In particular $\colim_{J^{\op}} Z$ consist of a set with only two elements while $\colim_{J^{\op}} \Hom_J(-,c)$ consist of a single element (recall that the colimit of a representable presheaf is a singleton set). Thus $\colim(r): \colim_{J^{\op}} Z \lto \colim_{J^{\op}} \Hom_J(-,c)$ cannot be a monomorphism. 
				
				Conversely, let $f:X \lto Y$ be a monomorphism of presheaves of set over $J$. We will show that $\colim(f): \colim_{J^{\op}} X \lto \colim_{J^{\op}} Y$ is a monomorphism of sets. First, recall that for a presheaf $X$ we have that \cite[{\stackstag{002U}}]{stacks-project}: 
				
				\[\colim_{J^{\op}} X = \left(\coprod_{j \in J} X(j)\right)/\sim\]
				where $\sim$ denotes the equivalence relation defined by the following: two elements $x_i \in X(i)$ and $x_j \in X(j)$ are equivalent if and only if there exists $n \geq 0$, a chain of morphisms in $J$: 
				
				\[\begin{tikzcd}
					& {i_1} &&&& {i_{2n-1}} \\
					{i=i_0} && {i_2} && {i_{2n-2}} && {i_{2n}=j}
					\arrow[from=1-2, to=2-1]
					\arrow[""{name=0, anchor=center, inner sep=0}, from=1-2, to=2-3]
					\arrow[from=1-6, to=2-7]
					\arrow[""{name=1, anchor=center, inner sep=0}, from=1-6, to=2-5]
					\arrow["\cdots"{description}, draw=none, from=0, to=1]
				\end{tikzcd}\]
				and elements $x_{i_k} \in X(i_k)$ with $k=0,1,...,2n$ such that they map to each other under the induced maps from the chain: 
				\[ X(i_{2k})\lto X(i_{2k-1}) \quad \text{and} \quad X(i_{2k-2})\lto X(i_{2k-1}).\] 
				
				Suppose then that $a \in X(i_0)$ and $b \in X(i_1)$ are two elements representing equivalence classes in $\colim_{J^{\op}} X$ and each they are such that the maps $f_{i_0}: X(i_0) \lto Y(i_0)$ and $f_{i_1}:X(i_1) \lto Y(i_1)$ send $a$ and $b$ to the same class in $\colim_J Y$. In other words, $f_{i_0}(a)$ and $f_{i_1}(b)$ are equal in the colimit $\colim_{J^{\op}} Y$. Then by definition, there exists $n\geq 0$ and a chain of morphisms in $J$: 
				\begin{equation}\label{Equation:zigzag colimit characterization}\begin{tikzcd}
						& {k_1} && {} && {k_{2n-1}} \\
						{i_0=k_0} && {k_2} && {k_{2n-2}} && {k_{2n}=i_1}
						\arrow[from=1-2, to=2-1]
						\arrow[""{name=0, anchor=center, inner sep=0}, from=1-2, to=2-3]
						\arrow[from=1-6, to=2-7]
						\arrow[""{name=1, anchor=center, inner sep=0}, from=1-6, to=2-5]
						\arrow["\cdots"{description}, draw=none, from=0, to=1]
				\end{tikzcd}\end{equation}
				and elements $y_{k_j} \in Y(k_j)$ where $j=0,1,...,2n$ such that $y_{k_0}=y_{i_0}=f(a)$, $y_{k_{2n}}=y_{i_1}=f(b)$ and they are sent to each other under the corresponding induced morphisms 
				\[
				Y(k_{2m})\lto Y(k_{2m-1}) \quad \text{and} \quad Y(k_{2m-2})\lto Y(k_{2m-1}).
				\]
				By repeatedly completing each cospan in \ref{Equation:zigzag colimit characterization} and after a finite number of completions we have that there exists some object $i \in J$ and morphisms $\begin{tikzcd}[column sep=12pt]
					i_0 & i \arrow[l,"\varphi",swap] \arrow[r,"\psi"] &i_1 
				\end{tikzcd}$ 
				and an element $c \in Y(i)$ such that $Y(\varphi)(f_{i_0}(a))=c$ and $Y(\psi)(f_{i_1}(b))=c$. By naturality of $f$ we have that 
				\begin{equation*}
					\begin{split}
						Y(\varphi) \circ f_{i_0} = f_i \circ X(\varphi) \\ 
						Y(\varphi) \circ f_{i_1} = f_i \circ X(\psi)
					\end{split}
				\end{equation*}
				therefore, 
				\begin{equation*}
					f_i(X(\varphi(a))) = c = f_i(X(\varphi(b)))
				\end{equation*}
				since $f$ is a monomorphism last equation implies that $X(\varphi)(a) = X(\psi)(b)$ which implies that $a$ and $b$ are equal in $\colim_J X$. 
			\end{proof}
			
			We will use the last lemma to justify why the colimit functor from \ref{Equation:colimit functor in slice category} preserves monomorphisms. Explicitly, we will show that any cospan in the comma category $([n]_k/\thk)$ is quadrable. Before doing so, let us recall that the category of isovariant $G$-posets $G\isovposets$ has biproducts given as follows: Let $A$ and $B$ two isovariant $G$-posets and define their isovariant product $A \times_{\text{isov}}B$ by: 
			
			\[ A \times_{\text{isov}}B \coloneqq \left\{(a,b) \in A\times B \ / \ G_a = G_b \right\}\]  
			That is, the isovariant product consist of the ordered pairs in the usual cartesian product $A \times B$ having the same isotropy groups. The action of $G$ is defined by the diagonal action. Notice that since $G$ is the two-element group, isotropy groups are either trivial or equal to $G$. Moreover, whenever an element has $G$ as isotropy group this means that is an element fixed by the action of $G$. Otherwise is non-fixed. Therefore if we denote by $A^G$ the fixed points of $A$ by the action of $G$ and by $A^{\cancel{G}}$ the non-fixed points of $A$ and similarly for $B$, we can write their isovariant product as: 
			
			\[A \times_{\text{isov}} B = (A^G \times B^G)\cup (A^{\cancel{G}} \times B^{\cancel{G}}).\]
			
			The natural projections $A\times_{\text{isov}} B \lto A$ and $A \times_{\text{isov}} B \lto B$ are of course isovariant order-preserving maps and thus the isovariant product we have just defined $\times_{\text{isov}}$ is a categorical biproduct in the category $G\isovposets$. Neverthless, this category does not have all arbitrary products as clearly it does not have a final object. \\
			
			On the other hand, the category $G\isovposets$ also has fiber products given by the usual formulas as in the category of sets. That is, if $f:A\lto C$ and $h:B \lto C$ are two isovariant maps of $G$-posets, then their fiber product $A \times_C B$ is given by: 
			
			\[A \times_C B \coloneqq \left\{(a,b)\in A \times_{\text{isov}} B \ / \ f(a)=g(b)\right\}. \]
			The action of $G$ on $A \times_C B$ is again given by the diagonal action and it is easy to see that $A \times_C B$ is universal.   
			
			\begin{proposition}\label{Proposition:comma category nk/thk is quadrable}
				Let $[n]_k$ be an object in $G\Deltacat$. Then the comma category $([n]_k/\thk)$ 
				has the property that any cospan is quadrable. 
			\end{proposition}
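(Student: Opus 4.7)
The plan is to exploit a simple structural decomposition of the comma category $([n]_k/\thk)$ and then apply an identity-style completion.

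First, I would observe that any object of $([n]_k/\thk)$, i.e.\ any morphism $f: [n]_k \lto \thk[m]_l = [m]_l \times \{0,1\}$ in $G\isovposets$, decomposes uniquely as a pair $f = (f', t)$, where $f' := \pi_1 \circ f: [n]_k \lto [m]_l$ is an isovariant order-preserving $G$-map (hence a morphism of $G\Deltacat$) and $t := \pi_2 \circ f: [n]_k \lto \{0,1\}$ is equivariant and order-preserving (isovariance of $f$ reduces to isovariance of $f'$ since the $G$-action on the second factor of $\thk[m]_l$ is trivial). By the defining formula \ref{Equation:definition of thk} for $\thk$ on morphisms, for any $\phi$ in $G\Deltacat$ the thickened map $\thk(\phi)$ acts as the identity on the second $\{0,1\}$ factor. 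Consequently a morphism $(f', t) \lto (g', t')$ in $([n]_k/\thk)$ is precisely a $\phi \in G\Deltacat$ satisfying both $\phi \circ f' = g'$ and $t = t'$; in particular every morphism in the comma category preserves the time component $t$.

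Given an arbitrary cospan
\[
\begin{tikzcd}
(f_1', t) \arrow[r, "\phi_1"] & (f_3', t) & (f_2', t) \arrow[l, "\phi_2"']
\end{tikzcd}
\]
in $([n]_k/\thk)$, the previous observation forces all three objects to share a common time $t$, and the hypothesis that the morphisms $\phi_i$ lie in the comma category translates into the identity $\phi_1 \circ f_1' = f_3' = \phi_2 \circ f_2'$ in $G\Deltacat$. I would then complete the cospan by taking the new apex object $f := (\idd_{[n]_k}, t): [n]_k \lto \thk[n]_k$, which is a well-defined object of $([n]_k/\thk)$ since its first projection is $\idd_{[n]_k}$ (trivially isovariant) and its second projection is the equivariant map $t$. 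The two legs are taken to be $f_1'$ and $f_2'$ themselves, viewed as morphisms in $G\Deltacat$; the pointwise computation $\thk(f_i') \circ f = (f_i' \circ \idd_{[n]_k}, t) = (f_i', t) = f_i$ verifies that each defines a morphism $f \lto f_i$ in $([n]_k/\thk)$, and the commutativity of the resulting square reduces to $\phi_1 \circ f_1' = \phi_2 \circ f_2'$, which is already given.

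The only nontrivial input is the decomposition and the resulting fact that time is preserved along every morphism of the comma category; once that is in place, quadrability reduces to the standard observation that every cospan in a slice category admits the identity at the apex as a completion. Combined with \Cref{Lemma:characterization colimit preserves monos}, this will imply that the colimit functor in \ref{Equation:colimit functor in slice category} preserves monomorphisms, and hence that the functorial cylinder $\interval$ constructed in \Cref{prop: left kan extension of R is the cylinder object for issov spaces} preserves monomorphisms, as required for the second axiom of an exact cylinder.
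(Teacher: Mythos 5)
Your proof is correct, but it takes a genuinely different route from the paper's. The paper forms the fiber product $P$ of $\thk(f)$ and $\thk(g)$ in $G\isovposets$, obtains $h:[n]_k\lto P$ from the universal property, and then reduces the problem to factoring $h$ through some $\thk[r]_s$, which it does with $\delta=\partial_0:[n]_k\lto\thk[n]_k$ and $\bar h=h\circ\rho$. You bypass the fiber product entirely: you observe that every object of $([n]_k/\thk)$ splits uniquely as a pair $(f',t)$ with $f'=\pi_1\circ f$ a morphism of $G\Deltacat$ and $t=\pi_2\circ f:[n]_k\lto\{0,1\}$ equivariant and order-preserving, and that since $\thk(\phi)$ is the identity on the $\{0,1\}$-factor, any morphism in the comma category forces its source and target to carry the same $t$. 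The completing cone is then simply the apex $(\idd_{[n]_k},t):[n]_k\lto\thk[n]_k$ with legs $f'_1,f'_2$, and the verifications are all one-line. What your route buys is that the legs are manifestly $G\Deltacat$-morphisms from the start, which is exactly what quadrability in the comma category requires. By contrast, the paper's specific choice $\delta=\partial_0$, $\bar h=h\circ\rho$ gives $\pi_1\circ\bar h=\alpha\circ\rho$, which is constant in the $\{0,1\}$-coordinate and hence is generally not of the form $\thk(\varphi')$ for any $\varphi'$ in $G\Deltacat$ unless $\alpha$ already factors through $\partial_0$; your apex $(\idd_{[n]_k},t)$ is precisely the choice of $\delta$ that makes that issue disappear, so your argument is both shorter and more careful on this point.
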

			
			\begin{proof}
				We begin by first noticing that a cospan in the comma category $([n]_k/\thk)$ consist of a diagram with commutative triangles: 
				\begin{equation}\label{Equation: quadrable cospan in relevant comma cat}
					\begin{tikzcd}[column sep=small,row sep=small]
						&& {\text{th}[m]_l} \\
						&&& {[n]_k} & {} \\
						{\text{th}[p]_q} && {\text{th}[a]_b} \\
						& {[n]_k}
						\arrow["\alpha"', from=4-2, to=3-1, swap]
						\arrow["\beta"', from=4-2, to=3-3]
						\arrow["f", from=3-1, to=3-3]
						\arrow["g"', from=1-3, to=3-3]
						\arrow["\beta", from=2-4, to=3-3]
						\arrow["\gamma"', from=2-4, to=1-3]
					\end{tikzcd}
				\end{equation}
				We will denote by $P$ the fiber product of $f$ and $g$ in the category of $G$-isovariant posets. Then since $g \circ \gamma = f \circ \alpha = \beta$, by the universal property of the fiber product there exists a unique $h:[n]_k \lto P$ such that the following diagram commutes: 
				
				\[\begin{tikzcd}[column sep=small,row sep=small]
					{[n]_k} \\
					& P && {\text{th}[m]_l} \\
					&&&& {[n]_k} & {} \\
					& {\text{th}[p]_q} && {\text{th}[a]_b} \\
					&& {[n]_k}
					\arrow["\alpha"', from=5-3, to=4-2,swap]
					\arrow["\beta"', from=5-3, to=4-4]
					\arrow["f", from=4-2, to=4-4]
					\arrow["g"', from=2-4, to=4-4]
					\arrow["\beta", from=3-5, to=4-4]
					\arrow["\gamma"', from=3-5, to=2-4]
					\arrow["{\pi_1}"', from=2-2, to=4-2]
					\arrow["{\pi_2}", from=2-2, to=2-4]
					\arrow["h"', dashed, from=1-1, to=2-2]
					\arrow["\gamma", bend left={18pt}, from=1-1, to=2-4]
					\arrow["\alpha"', bend right={18pt}, from=1-1, to=4-2]
					\arrow["\lrcorner"{anchor=center, pos=0.125}, draw=none, from=2-2, to=4-4]
				\end{tikzcd}\]
				Notice that if the cospan from \ref{Equation: quadrable cospan in relevant comma cat} were quadrable in $([n]_k/\thk)$ then there would be $[r]_s, \varphi: \thk[r]_s \lto \thk [n]_k, \psi: \thk[r]_s \lto \thk[m]_l$ and $\delta:[n]_k \lto \thk[r]_s$ such that the following diagram commutes:
				
				\[\begin{tikzcd}[column sep=small,row sep=small]
					&& {[n]_k} \\
					& {\text{th}[r]_s} && {\text{th}[m]_l} \\
					{[n]_k} &&&& {[n]_k} & {} \\
					& {\text{th}[n]_k} && {\text{th}[a]_b} \\
					&& {[n]_k}
					\arrow["\beta"', from=5-3, to=4-4]
					\arrow["f", from=4-2, to=4-4]
					\arrow["g"', from=2-4, to=4-4]
					\arrow["\beta", from=3-5, to=4-4]
					\arrow["\gamma"', from=3-5, to=2-4]
					\arrow["\psi", from=2-2, to=2-4]
					\arrow["\varphi"', from=2-2, to=4-2]
					\arrow["\delta"', from=1-3, to=2-2]
					\arrow["\gamma", from=1-3, to=2-4]
					\arrow["\delta", from=3-1, to=2-2]
					\arrow["\alpha"', from=3-1, to=4-2]
					\arrow["\alpha", from=5-3, to=4-2]
				\end{tikzcd}\]
				Again the commutativity of such diagram and the universal property of the fiber product guarantee the existence of a unique $\bar{h}: \thk[r]_s \lto P$ such that $\pi_1 \circ \bar{h} = \varphi$ and $\pi_2 \circ \bar{h} = \psi$.
%
				Therefore we will have that 
				\begin{equation*}
					\begin{split}
						\pi_1 \circ \bar{h} \circ \delta = \varphi \circ \delta = \alpha \\ 
						\pi_2 \circ \bar{h} \circ \delta = \psi \circ \delta = \gamma
					\end{split}
				\end{equation*}
				and by uniqueness one must have that $h = \bar{h} \circ \delta$. Therefore, to complete the cospan from \ref{Equation: quadrable cospan in relevant comma cat} it is enough to show that any isovariant map of $G$-posets of the form $h: [n]_k \lto P$ factors through the thickening of some $[r]_s$ as follows:
				
				\[\begin{tikzcd}
					{[n]_k} && P \\
					& {\text{th}[r]_s}
					\arrow["h", from=1-1, to=1-3]
					\arrow["\delta"', from=1-1, to=2-2]
					\arrow["{\bar{h}}"', from=2-2, to=1-3]
				\end{tikzcd}\]
				Notice that in this case the data $(\varphi, \psi, \delta)$ will be then determined by the last factorization of $h$. Nevertheless, an isovariant map of $G$-posets $h: [n]_k \lto P$ can be always be factored by the thickening of $[n]_k$ by considering for instance
				
				\[\begin{tikzcd}
					{[n]_k} &&& P \\
					& {\text{th}[n]_k} && {[n]_k}
					\arrow["h", from=1-1, to=1-4]
					\arrow["{\partial_0}"', from=1-1, to=2-2]
					\arrow["{h \circ \rho}"'{pos=0.6}, from=2-2, to=1-4]
					\arrow["\rho"', from=2-2, to=2-4]
					\arrow["h"', from=2-4, to=1-4]
				\end{tikzcd}\]
				Here $\partial_0: [n]_k \lto \thk[n]_k$ denotes the canonical map sending $[i,g] \longmapsto ([i,g],0)$ and $\rho: \thk[n]_k \lto [n]_k$ is the canonical projection and this completes the proof.   
			\end{proof}

					\begin{proposition}\label{prop: I preserves colimits and monos}
						Let $\interval: \ssetisov \lto \ssetisov$ be the functor defined in \ref{para:cylinder for Isov_G}. Then $\interval$ preserves colimits and monomorphisms.
					\end{proposition}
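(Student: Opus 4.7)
The plan splits naturally into two independent claims. For the preservation of colimits, I would observe that by construction $\interval = \Lan_{\catcal{Y}} I$ is a left Kan extension of a functor $I \colon G\Deltacat \lto \ssetisov$ along the Yoneda embedding of the small category $G\Deltacat$ into the cocomplete category $\ssetisov$. Such a Kan extension along a Yoneda embedding admits a canonical right adjoint, given on objects by
\[
Y \longmapsto \bigl([n]_k \longmapsto \Hom_{\ssetisov}(I^{n,k}, Y)\bigr).
\]
Being a left adjoint, $\interval$ automatically preserves all small colimits.

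For the preservation of monomorphisms I would first reduce to a pointwise statement: in any presheaf category a morphism is a monomorphism if and only if it is componentwise a monomorphism of sets, so it suffices to show that for every $[n]_k \in G\Deltacat$ the evaluation $(\interval -)_{n,k} \colon \ssetisov \lto \Set$ preserves monomorphisms. By the discussion in Remark \ref{Para:jumps and preservation of monos}, this evaluation factors as
\[
\ssetisov \xrightarrow{\,-\circ U^{\ast}\,} \Fun[\,([n]_k/\thk)^{\op},\Set\,] \xrightarrow{\,\colim\,} \Set.
\]
The first arrow is a restriction functor and preserves monomorphisms, since monomorphisms in $\Fun[\,([n]_k/\thk)^{\op},\Set\,]$ are also detected componentwise in $\Set$.

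For the second arrow I would invoke Lemma \ref{Lemma:characterization colimit preserves monos} applied to $J = ([n]_k/\thk)$: the colimit functor $\Psh(J) \lto \Set$ preserves monomorphisms precisely when every cospan in $J$ is quadrable. But this is exactly the content of Proposition \ref{Proposition:comma category nk/thk is quadrable}, which has already been established. Composing the three ingredients (componentwise detection, levelwise restriction, quadrability plus Lemma \ref{Lemma:characterization colimit preserves monos}) gives preservation of monomorphisms under $\interval$. The only substantive step in the chain is the quadrability of cospans in $([n]_k/\thk)$; everything else is formal, so I do not anticipate any obstacle beyond correctly assembling these pieces.
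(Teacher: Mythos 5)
Your proposal is correct and follows essentially the same route as the paper: the paper's own proof also deduces colimit-preservation from $\interval$ being a pointwise left Kan extension along Yoneda, and proves mono-preservation by citing the factorization from Remark \ref{Para:jumps and preservation of monos}, Lemma \ref{Lemma:characterization colimit preserves monos}, and Proposition \ref{Proposition:comma category nk/thk is quadrable}. Your phrasing of the colimit half is actually a touch cleaner, since explicitly exhibiting the right adjoint $Y \mapsto ([n]_k \mapsto \Hom_{\ssetisov}(I^{n,k},Y))$ makes the argument tighter than the paper's ``given by a colimit, therefore preserves colimits'' gloss.
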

					\begin{proof}
						First part holds simply from the fact that $\interval$ is a pointwise left Kan extension along the Yoneda embedding so that $\interval$ is given by a certain colimit (see \ref{para:cylinder for Isov_G}) and therefore it must preserve colimits. Second part follows immediately from \Cref{Para:jumps and preservation of monos}, \Cref{Lemma:characterization colimit preserves monos} and \Cref{Proposition:comma category nk/thk is quadrable}. 
					\end{proof}	
					%
				
				\begin{lemma}\label{lemma: cylinder functor and pullbacks}
					For each monomorphism $\iota: X \  \linj \ Y$ in $\ssetisov$, the commutative squares 
					\[ 
					\begin{tikzcd}
						X \arrow[r,"\iota", hookrightarrow] \arrow[d,"\partial^X_{\varepsilon}",swap] &Y \arrow[d,"\partial^Y_{\varepsilon}"] \\ \interval X \arrow[r,"\interval (\iota)", swap,hookrightarrow] & \interval Y
					\end{tikzcd}
					\]
					are pullback squares for $\varepsilon=0,1$. 
				\end{lemma}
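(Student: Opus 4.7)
The plan is to verify the pullback property pointwise. Monomorphisms and pullbacks in $\ssetisov = \Fun[G\Deltacat^{\op},\Set]$ are computed sectionwise, so I fix $[m]_l \in G\Deltacat$ and look at the induced square of sets. Its horizontal arrows are injective: $\iota$ is a monomorphism by hypothesis, and $\interval(\iota)$ is a monomorphism by \Cref{prop: I preserves colimits and monos}. Injectivity of $\iota_{m,l}$ already supplies uniqueness in the pullback property, so the only remaining task is to show that every compatible pair $(y,\xi) \in Y_{m,l} \times_{(\interval Y)_{m,l}} (\interval X)_{m,l}$ lies in the image of $X_{m,l}$.

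For existence I will work with the colimit description $(\interval Y)_{m,l} = \colim_{([m]_l/\thk)^{\op}} Y \circ U^*$ from \Cref{Para:jumps and preservation of monos}. I pick a representative $(x_0, f_0)$ of $\xi$ with $x_0 \in X_{n,k}$ and $f_0 \colon [m]_l \to \thk[n]_k$; the compatibility $\partial^Y_\varepsilon(y) = (\interval \iota)(\xi)$ then amounts to the equality of classes $[\iota(x_0), f_0] = [y, \partial_\varepsilon^{[m]_l}]$ in $(\interval Y)_{m,l}$, where $\partial_\varepsilon^{[m]_l} \colon [m]_l \to \thk[m]_l$ sends $a \mapsto (a,\varepsilon)$. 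The key step is a span-simplification of this colimit equivalence: by \Cref{Proposition:comma category nk/thk is quadrable} the comma category $[m]_l/\thk$ has quadrable cospans, and iteratively completing each cospan in a generic zigzag to a commuting square (exactly the argument used in the proof of \Cref{Lemma:characterization colimit preserves monos}) collapses the zigzag to a single span. This produces $g\colon [m]_l \to \thk[r]_s$ together with morphisms $\varphi_1\colon [r]_s \to [n]_k$ and $\varphi_2\colon [r]_s \to [m]_l$ in $G\Deltacat$ satisfying $\thk(\varphi_1) \circ g = f_0$, $\thk(\varphi_2) \circ g = \partial_\varepsilon^{[m]_l}$, and $\varphi_1^*(\iota x_0) = \varphi_2^* y$ in $Y_{r,s}$.

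The remainder is formal bookkeeping. Writing $g(a) = (g_1(a), g_2(a))$, the identity $\thk(\varphi_2) \circ g = \partial_\varepsilon^{[m]_l}$ forces $g_2 \equiv \varepsilon$ and $\varphi_2 \circ g_1 = \idd_{[m]_l}$, while $\thk(\varphi_1) \circ g = f_0$ yields $f_0 = \partial_\varepsilon^{[n]_k} \circ h$ for $h \coloneqq \varphi_1 \circ g_1 \colon [m]_l \to [n]_k$. Regarding $h$ as a morphism $(\partial_\varepsilon^{[m]_l}) \to (f_0)$ in $[m]_l/\thk$, the coend relation gives $\xi = [x_0, f_0] = [h^* x_0, \partial_\varepsilon^{[m]_l}] = \partial^X_\varepsilon(h^* x_0)$. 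Setting $x \coloneqq h^* x_0 \in X_{m,l}$ and using naturality of $\iota$ together with $\varphi_2 \circ g_1 = \idd_{[m]_l}$, I obtain $\iota(x) = g_1^* \varphi_1^*(\iota x_0) = g_1^* \varphi_2^* y = (\varphi_2 \circ g_1)^* y = y$, which is the required lift.

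The main obstacle is thus the span-shortening of the colimit equivalence; this rests entirely on \Cref{Proposition:comma category nk/thk is quadrable} and reuses verbatim the combinatorial simplification appearing in the proof of \Cref{Lemma:characterization colimit preserves monos}. Every other step is a routine manipulation of coend relations combined with naturality of $\iota$.
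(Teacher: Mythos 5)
Your proof is correct, but it follows a genuinely different route from the paper's. The paper's argument is shorter and more structural: since $\rho^X \circ \partial^X_{\varepsilon} = \idd_X$ and $\rho$ is natural, given $y$ with $\partial^Y_{\varepsilon}(y) = \interval(\iota)(z)$ for some $z \in \interval X$, one simply sets $\bar{x} \coloneqq \rho^X(z)$ and computes $\iota(\bar{x}) = \rho^Y(\interval(\iota)(z)) = \rho^Y(\partial_{\varepsilon}^Y(y)) = y$; the compatibility $\partial^X_\varepsilon(\bar{x})=z$ then follows because $\interval(\iota)$ is injective. You instead unwind the colimit description of $(\interval X)_{m,l}$ from \Cref{Para:jumps and preservation of monos}, invoke \Cref{Proposition:comma category nk/thk is quadrable} to collapse the colimit-equivalence zigzag to a single span, and then do coend bookkeeping to extract the lift $x = h^*x_0$. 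This works, and I verified the bookkeeping: the identities $g_2 \equiv \varepsilon$, $\varphi_2 \circ g_1 = \idd$, the coend relation giving $\xi = \partial^X_\varepsilon(h^*x_0)$, and the naturality step giving $\iota(x) = y$ all go through. But you are essentially re-running, on the nose, the span-collapse argument that the paper already packaged inside \Cref{Lemma:characterization colimit preserves monos} (used there to prove $\interval$ preserves monos, \Cref{prop: I preserves colimits and monos}), whereas the paper's proof of \emph{this} lemma leverages the split monomorphism structure $\rho\partial_\varepsilon = \idd$ and bypasses the colimit machinery entirely. The moral of the paper's proof --- that the pullback condition is a formal consequence of $\partial_\varepsilon$ being a natural section of $\rho$ together with $\interval$ preserving monos --- is somewhat obscured in your version; on the other hand, you are more explicit than the paper about verifying the second component $\partial^X_\varepsilon(x) = \xi$ of the pullback condition, which the paper leaves implicit.
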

				
				\begin{proof}
					First thing to notice is that all morphisms in the last square are monomorphisms since $\interval$ preserves monomorphisms and we have seen that $\partial_{\varepsilon}^X$ and $\partial_{\varepsilon}^Y$ are also monomorphisms. Furthermore, pullbacks in $\ssetisov$ are computed pointwise so that it is enough to prove that for any $[n]_k \in G\Deltacat$ and $\varepsilon=0,1$ the squares
					
					\begin{equation}\label{Equation:pullback diagram for exact cylinder} 
						\begin{tikzcd}
							X_{n,k} \arrow[r,"\iota_{n,k}", hookrightarrow] \arrow[d,"\partial^X_{\varepsilon}",swap,hookrightarrow] &Y_{n,k} \arrow[d,"\partial^Y_{\varepsilon}",hookrightarrow] \\ (\interval X)_{n,k} \arrow[r,"\interval(\iota)_{n,k}", swap,hookrightarrow] & (\interval Y)_{n,k}
						\end{tikzcd}
					\end{equation}
					are pullback squares in $\Set$. To simplify notation and since $[n]_k$ is fixed, we will omit the subscripts $n,k$ and we will denote by $f$ the morphism $\interval(\iota)_{n,k}$. Hence, as all maps in diagram \ref{Equation:pullback diagram for exact cylinder} are injective then the pullback $P$ of $\partial_{\varepsilon}^Y$ along $f$ is given by
					\[ P \coloneqq (\partial_{\varepsilon}^Y)^{-1}\left(f(\interval X)\right).\]
					On the other hand, by the universal property of pullbacks and the commutativity of diagram \ref{Equation:pullback diagram for exact cylinder} there exists a unique map $\theta: X \lto P$ such that
					\[\begin{tikzcd}
						X && Y \\
						& P \\
						{\mathbb{I}X} && {\mathbb{I}Y}
						\arrow["{\partial_{\varepsilon}^X}"', from=1-1, to=3-1]
						\arrow["f"', from=3-1, to=3-3]
						\arrow["{\partial_{\varepsilon}^Y}", from=1-3, to=3-3]
						\arrow["\iota", from=1-1, to=1-3]
						\arrow["\theta"', dashed, from=1-1, to=2-2]
						\arrow[from=2-2, to=1-3]
						\arrow[from=2-2, to=3-1]
					\end{tikzcd}\]
					commutes. The map $\theta$ is in particular given by $\theta(x)=\iota(x)$ for each $(n,k)$-simplex $x$ in $X$. This is of course well defined since 
					\[\partial_{\varepsilon}^Y(\iota(x)) = f(\partial_{\varepsilon}^X(x)) \in f(\interval X).\] 
					so that $\iota(x) \in P$. Additionally, the map $\theta$ is injective as $\iota$ is. We will show next that $\theta$ is also surjective. Let $y \in P$, then by definition of $P$ there exists $z \in \interval X$ such that 
					\[\partial_{\varepsilon}^Y(y)= f(z).\]
					Let $\bar{x}\coloneqq \rho^X(z)$. Then, using the naturality of $\rho: \interval \lto \idd$ and the equation above, we have:  
					\begin{align*}
						\theta(\bar{x})&= \iota(\rho^X(z)) \\ &=\rho^Y(f(z)) \\ &= \rho^Y(\partial_{\varepsilon}^Y(y)) \\&=y.
					\end{align*}
					Thus, $\theta$ is indeed surjective. As a consequence
					\[ X_{n,k}\simeq (\partial_{\varepsilon}^Y)^{-1}\left(f(\interval X)\right) = (\interval X)_{n,k}\times_{(\interval Y)_{n,k}}Y_{n,k}.  \] 
					We conclude that the squares \ref{Equation:pullback diagram for exact cylinder} are indeed pullback squares. 
				\end{proof}
				\Cref{prop: I preserves colimits and monos} and \Cref{lemma: cylinder functor and pullbacks} then finish the proof of \Cref{prop: left kan extension of R is the cylinder object for issov spaces}.

				\subsection{Normal Monomorphisms}
				
				\begin{definition}[{\cite[8.1.23]{Cisinski2016LPCMDTH}}]\label{Definition:dominant sections}
					Let $X$ be an isovariant simplicial set and $u: \Delta^{m,l} \lto X$ be a non degenerate $(m,l)$-simplex of $X$. The simplex $u$ is said to be \emph{dominant} if the pair $([m]_l,u)$ does not have nontrivial automorphisms in the over category $G\Deltacat\downarrow X$. \\
					Furthermore, an $(n,k)$-simplex $v: \Delta^{n,k} \lto X$ is called \emph{normal} if there exists a pair $(\pi,u)$ where $\pi: [n]_k \lto [m]_l$ is in $G\Deltacat_{-}$ (is an epimorphism) and $u:\Delta^{m,l}\lto X$ is a dominant simplex such that 
					\[X(\pi)(u)=v.\]
					An isovariant simplicial set map $X$ is said to be \emph{normal} if all simplices of $X$ are normal. Notice that any non degenerate simplex $v$ of $X$ is normal if and only if is dominant \cite[\nopp 8.1.23]{Cisinski2016LPCMDTH}.
				\end{definition}		
				
				\begin{remark}\label{Remark:meaning dominant sections}
					Recall that an automorphism of $([m]_l,u)$ in $G\Deltacat \downarrow X$ consist of an isovariant simplicial set isomorphism $\varphi: \Delta^{m,l}\lto \Delta^{m,l}$ such that the map $\varphi$ and its inverse $\varphi^{-1}$ are both morphisms over $X$ in the sense that the diagrams
					\[\begin{tikzcd}
						{\Delta^{m,l}} && {\Delta^{m,l}} & {\text{and}} & {	\Delta^{m,l}} && {\Delta^{m,l}} \\
						& X &&&& X
						\arrow["\varphi", from=1-1, to=1-3]
						\arrow["u"', from=1-1, to=2-2]
						\arrow["u", from=1-3, to=2-2]
						\arrow["{\varphi^{-1}}", from=1-5, to=1-7]
						\arrow["u"', from=1-5, to=2-6]
						\arrow["u", from=1-7, to=2-6]
					\end{tikzcd}\]
					both commute. For instance, if we consider $X= \Delta^{0,1}$ and $\sigma: \Delta^{0,1}\lto \Delta^{0,1}$ is the $(0,1)$-simplex induced from the swapping map $\sigma:[0]_1 \lto [0]_1$ then $\sigma$ is normal since $\sigma$ is non degenerate and if $\varphi \in \Aut_{G\Deltacat \downarrow \Delta^{0,1}}([0]_1,\sigma)$, then we have two commutative diagrams
					\[\begin{tikzcd}
						{\Delta^{0,1}} && {\Delta^{0,1}} & {\text{and}} & {	\Delta^{0,1}} && {\Delta^{0,1}} \\
						& {\Delta^{0,1}} &&&& {\Delta^{0,1}}
						\arrow["\varphi", from=1-1, to=1-3]
						\arrow["\sigma"', from=1-1, to=2-2]
						\arrow["\sigma", from=1-3, to=2-2]
						\arrow["{\varphi^{-1}}", from=1-5, to=1-7]
						\arrow["\sigma"', from=1-5, to=2-6]
						\arrow["\sigma", from=1-7, to=2-6]
					\end{tikzcd}\]
					Since the only automorphisms of $\Delta^{0,1}$ are the identity and $\sigma$ itself, notice that $\varphi$ cannot be $\sigma$, otherwise by the commutativity of last diagrams we will have that $\sigma \circ \sigma = \sigma$ which is not possible as $\sigma^2 = \idd$. As a consequence $\varphi$ must be $\idd_{\Delta^{0,1}}$ and therefore 
					\[ \Aut_{G\Deltacat \downarrow \Delta^{0,1}}([0]_1,\sigma)= \left\{\idd_{\Delta^{0,1}}\right\}.\]
					In general, representable presheaves are always normal (\cite[\nopp 8.1.23]{Cisinski2016LPCMDTH}) so that in particular the isovariant simplicial sets of the form $\Delta^{n,k}$ are normal.
				\end{remark}	
				
				
				\begin{definition}\label{Definition:normal monomorphism}
					A monomorphism of isovariant simplicial sets $X\lto Y$ is \emph{normal} if any non degenerate simplex $y:\Delta^{m,l}\lto Y$, which does not factor through $X$, is normal. Notice that an isovariant simplicial set $X$ is normal if and only if the canonical monomorphism $\emptyset \lto X$ from the initial isovariant simplicial set to $X$ is normal. 
				\end{definition}
				
				Normal monomorphisms make sense in any category of presheaves over an skeletal category. A fundamental result in this theory is that normal monomorphisms form a saturated class. This class is moreover \enquote*{generated} by the boundary inclusions. This was proved in full generality by Cisinski in \cite[\nopp 8.1.35]{Cisinski2016LPCMDTH}. As our category $G\Deltacat$ is skeletal this result also makes sense for isovariant simplicial sets: 
				
				\begin{proposition}
					The class of normal monomorphisms in $\ssetisov$ is equal to the smallest saturated class of isovariant simplicial set maps containing the boundary inclusions $\partialbf\Delta^{n,k}\lto \Delta^{n,k}$. In particular the class of normal monomorphisms is closed under pushouts, transfinite compositions and retracts.
				\end{proposition}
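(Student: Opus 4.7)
The plan is to invoke Cisinski's general theorem \cite[8.1.35]{Cisinski2016LPCMDTH}, which is exactly the statement we want but formulated for an arbitrary skeletal category $A$ in place of $G\Deltacat$. The hypothesis needed, namely that $G\Deltacat$ is skeletal in the sense of \cite[Definition 8.1.1]{Cisinski2016LPCMDTH}, has already been established in \Cref{Prop: The category Gdelta is einlenber-Zilber}. Therefore the proof reduces to a direct application of that theorem, but it is worth spelling out the two inclusions to make the argument self-contained.

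For the easy direction, one shows that the class of normal monomorphisms is saturated and contains every boundary inclusion $\partialbf \Delta^{n,k} \hookrightarrow \Delta^{n,k}$. The latter follows from \Cref{Remark:meaning dominant sections}: since representable presheaves $\Delta^{n,k}$ are normal, so in particular are the non-degenerate simplices not factoring through $\partialbf \Delta^{n,k}$. Closure under pushouts, transfinite compositions, and retracts then has to be checked: one verifies that if a non-degenerate simplex of the target does not factor through the domain, then it can be traced back through the colimit / pushout / retract diagram to a dominant simplex of some earlier stage, using that epi-mono factorisations in a skeletal category are essentially unique up to a unique automorphism (axiom Sq2 together with the \Cref{Lemma: decomposition as coface and codeg maps}-style decomposition of morphisms).

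For the nontrivial direction — that every normal monomorphism lies in the saturated class generated by the boundary inclusions — one runs the standard skeletal filtration argument. Given a normal monomorphism $X \hookrightarrow Y$, filter $Y$ by its skeleta and show that each step $\sk_{n-1}(Y) \cup X \hookrightarrow \sk_n(Y) \cup X$ is obtained as a pushout of a coproduct of boundary inclusions $\partialbf \Delta^{n,k} \hookrightarrow \Delta^{n,k}$ indexed by the dominant $(n,k)$-simplices of $Y$ that do not factor through $X$, quotiented by the action of automorphisms. This uses the identification $\sk_{n-1}(\Delta^{n,k}) = \partialbf\Delta^{n,k}$ already noted in \ref{para:isovariantboundaryandhorn} and is exactly the content of \cite[Proposition 8.1.35]{Cisinski2016LPCMDTH}.

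The main obstacle, conceptually, is the bookkeeping in the pushout step: one must check that the attaching maps are well-defined, i.e.\ that a dominant simplex $u:\Delta^{n,k} \to Y$ not factoring through $X$ contributes exactly once (up to the trivial automorphism group guaranteed by the dominant condition) and that the swapping maps $\sigma$ are correctly handled. In our setting this is automatic because dominance forces the automorphism group of $([n]_k, u)$ in $G\Deltacat \downarrow Y$ to be trivial, so the swapping action on the indexing set of non-degenerate simplices causes no redundancy. Since all of this is precisely what Cisinski verifies in the abstract skeletal setting, the cleanest presentation is to cite \cite[Proposition 8.1.35]{Cisinski2016LPCMDTH} directly, remarking that its hypotheses hold by \Cref{Prop: The category Gdelta is einlenber-Zilber}.
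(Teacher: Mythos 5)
Your proposal is correct and follows exactly the paper's route: the paper itself does not include a proof block for this proposition, but the sentences immediately preceding it state that the result is a direct application of Cisinski's general theorem \cite[8.1.35]{Cisinski2016LPCMDTH}, made possible because $G\Deltacat$ was shown to be skeletal in \Cref{Prop: The category Gdelta is einlenber-Zilber}. Your elaboration of the skeletal filtration and the role of dominant simplices (trivial automorphism groups making the pushout step unambiguous) is an accurate unpacking of what Cisinski verifies in the abstract setting, but it is not a departure from the paper's reasoning.
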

				
				\begin{para}\label{Paragraph:Properties_normal_monomorphisms}  \normalfont Normal monomorphisms can be characterized by certain free actions on simplices. In this note we will observe the last characterization and some other properties already known in the context of Dendroidal Sets \cite{Cisinski2011}. 
					\begin{enumerate}[label=\stlabel{Paragraph:Properties_normal_monomorphisms}, ref={\arabic*}] 
						\item \label{Paragraph:Properties_normal_monomorphisms.1} Suppose $X \lto Y$ is a normal monomorphism of isovariant simplicial sets and let $[n]_k$ be any object in $G\Deltacat$. There is a natural action of $\Aut_{G\Deltacat}([n]_k)$ on the $(n,k)$-simplices of $Y$ given by: 
						\begin{center}	$ 
							\begin{array}{lll}
								\Aut_{G\Deltacat}([n]_k)&\times Y_{n,k} &\longrightarrow Y_{n,k} \\
								&(\psi,y) &\longmapsto y \circ \psi
							\end{array}$
						\end{center}
						where, as usual, we are identifying $\psi$ with its corresponding morphism $\psi: \Delta^{n,k} \lto \Delta^{n,k}$ under the Yoneda isomorphism and $y:\Delta^{n,k}\lto Y$. Now, suppose that $y$ is an $(n,k)$-simplex of $Y$ which does not factor through $X$ or equivalently $y \in Y_{n,k}\smallsetminus X_{n,k}$ and additionally consider the class of morphisms: 
						\[ \catcal{N}= 
						\left\{
						\begin{aligned}
							\iota: X \linj Y \in \ssetisov \ : \ &\iota \text{ is a monomorphism and for any } [n]_k, \\ 
							&\Aut_{G\Deltacat}([n]_k) \text{ acts freely on } Y_{n,k}\smallsetminus X_{n,k}. 
						\end{aligned}
						\right\}
						\]
						The class $\catcal{N}$ is closed under pushouts, retracts and transfinite compositions. Moreover it contains the boundary inclusions $\partialbf \Delta^{n,k}\lto \Delta^{n,k}$ so that by saturation the class of normal monomorphisms is contained in $\catcal{N}$. The converse assertion is evident. As a consequence normal monomorphisms can be characterized as monomorphisms satisfying the clause that defines the class of morphisms $\catcal{N}$. An immediate consequence of this fact is that an isovariant simplicial set $X$ is normal if and only if for any $[n]_k$, the action of the group of automorphisms $\Aut_{G\Deltacat}([n]_k)$ on $X_{n,k}$ is free. 
						\item \label{Paragraph:Properties_normal_monomorphisms.2} Given any isovariant simplicial set map $f:X\lto Y$ with $Y$ normal then it turns out that $X$ is normal as well. This follows in the same way as in (\cite[\nopp 3.4.5]{Moerdijk2010}). The fact that $Y$ is normal implies that it can be built up by attaching cells in the sense that the following square is a pushout square
						\[ 
						\begin{tikzcd}
							\displaystyle
							\coprod_{([n]_l,x) \in \Sigma} (\partial \Delta^{n,k})_{x} \arrow[r] \arrow[d,hookrightarrow] & \sk_{n-1}(Y) \arrow[d,hookrightarrow] \\ \displaystyle \coprod_{([n]_l,x) \in \Sigma} (\Delta^{n,k})_{x} \arrow[r] &  \sk_n(Y)
						\end{tikzcd}
						\]
						Here $\Sigma$ denotes the set of isomorphisms classes of non degenerate objects $([n]_l,x)$ in $G\Deltacat\downarrow Y$, $0\leq l\leq n+1$ and, $Y=\colim_{n}\sk_n(Y)$. This implies that, for any non-degenerate $(n,k)$-simplex $y$ of $Y$, the corresponding isovariant simplicial set map $y: \Delta^{n,k}\lto Y$ has a left cancellation property \cite[Lemma 3.4.5]{Moerdijk2010} with respect to epimorphisms in $G\Deltacat$: If $\alpha, \beta:\Delta^{m,l} \twoheadrightarrow \Delta^{n,k}$ are epimorphisms such that $y \circ \alpha = y\circ \beta$ then $\alpha = \beta$. 
						
						To show that $X$ is normal we can then use the characterization stated in \enumref{Paragraph:Properties_normal_monomorphisms}{1}. Let $x:\Delta^{n,k}\lto X$ be a $(n,k)$-simplex of $X$ and suppose that $\Psi$ is an automorphism of $[n]_k$ such that 
						\[ x \circ \Psi = x.\]
						Then, by post-composing with $f$ we have that $f \circ x \circ \Psi = f \circ x$. If $f\circ x$ is non-degenerate then, the fact that $Y$ is normal and the property \enumref{Paragraph:Properties_normal_monomorphisms}{1} imply that $\Psi = \idd$. If $f \circ x$ is degenerate then there exist $y \in Y_{m,l}$ non-degenerate and $s:[n]_k \twoheadrightarrow [m]_l$ a codegeneracy such that 
						\[f \circ x = y \circ s.\]
						Hence, we have that 
						\[y \circ s \circ \Psi = f \circ x \circ \Psi = f \circ x = y \circ s\]
						thus, by left cancellation we must have
						\[s \circ \Psi = s.\]
						In other words, the following two equivalent diagrams commute:
						\[\begin{tikzcd}
							{[n]_k} & {[n]_k} && {\Delta^{n,k}} & {\Delta^{n,k}} \\
							& {[m]_l} &&& {\Delta^{m,l}}
							\arrow["\Psi","\simeq"', from=1-4, to=1-5]
							\arrow[""{name=0, anchor=center, inner sep=0}, "s"', two heads, from=1-4, to=2-5]
							\arrow["s", two heads, from=1-5, to=2-5]
							\arrow["{\Psi }","\simeq"', from=1-1, to=1-2]
							\arrow["s"', two heads, from=1-1, to=2-2]
							\arrow[""{name=1, anchor=center, inner sep=0}, "s", two heads, from=1-2, to=2-2]
							\arrow[shorten <=32pt, shorten >=32pt, Leftrightarrow, from=1, to=0]
						\end{tikzcd}\] 
						But this is possible only when $\Psi = \idd$. By using \enumref{Paragraph:Properties_normal_monomorphisms}{1} we conclude that $X$ is normal.
						\item \label{Paragraph:Properties_normal_monomorphisms.3} The last item implies that if $K \linj L$ is any monomorphism and $L$ is normal then $K \linj L$ must be a normal monomorphism.
					\end{enumerate}
				\end{para}	
	
	\section{Model Structures for Isovariant Simplicial Sets}\label{Section: model structure for ssetisov}

\subsection{Isovariant Homotopy}
Using the cylinder functor constructed in last section we can then define isovariant homotopies as in \cite[1.3.3]{Cisinski2016LPCMDTH}.
\begin{definition}\label{Definition:isovariant homotopy}
	Let $f, g: X \lto Y$ be two isovariant simplicial set maps. An \emph{isovariant elementary homotopy} from $f$ to $g$ is an isovariant simplicial set map $H: \interval X \lto Y$ such that the diagram 
	\[ 
	\begin{tikzcd}
		X \arrow[dr,"\partial^X_0",swap] \arrow[drr, "f", bend left=30] \\ &\interval X \arrow[r,"H"] & Y \\ 
		X \arrow[ru,"\partial^X_1"] \arrow[urr,"g",bend right=30,swap]
	\end{tikzcd}
	\]
	commutes. The quotient of $\Hom_{\ssetisov}(X,Y)$ by the smallest equivalence relation generated by the relation of elementary isovariant homotopy will be denoted by $[X,Y]_{\text{isov}}$, if the context is clear sometimes we will simply write $[X,Y]$. Isovariant homotopy equivalences are then isovariant simplicial set maps that induce isomorphisms in the homotopy category $\catname{Ho}(\ssetisov)$. Equivalently,  $\ f: X \lto Y$ in $\ssetisov$ is an isovariant homotopy equivalence if for any isovariant simplcial set $W$, the induced map  $\ f^{\ast}: [Y,W] \lto [X,W]$ is a bijection. 
\end{definition}

The first fundamental result of this section is the characterization of admissible horns we mentioned in \ref{def: Admisible simplicial isovariant horn}. This characterization and its proof are partially inspired from \cite[\nopp 1.13]{douteau2020simplicial}.

%

\begin{lemma}\label{Lemma: characterization of isovariant horn inclusions} 
	Let $n> 0$, $0\leq k \leq n+1$ and $0\leq l \leq n$. The following are equivalent:
	\begin{enumerate}[label=\textbf{\arabic*.}, ref=\arabic*]
		\item The $l$-horn $\Lambda^{n,k}_l$ is admissible.
		\item The inclusion $\Lambda^{n,k}_l \ \linj \ \Delta^{n,k}$ is an isovariant elementary homotopy equivalence. 
	\end{enumerate} 
\end{lemma}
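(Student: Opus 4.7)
The proof treats the two implications separately: for $(1) \Rightarrow (2)$ I construct an explicit homotopy inverse of $\iota$ using an ``alternate'' composition of face and degeneracy maps, and for $(2) \Rightarrow (1)$ I argue by contrapositive using \Cref{Proposition:admissible horns}.

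For $(1) \Rightarrow (2)$, assume admissibility with $a \in \{l - 1, l\}$ and the associated $\varepsilon \in \{0, 1\}$. Then $s^a_\varepsilon$ is defined, and I consider the alternate composition $r = d^{l+1}_\varepsilon \circ s^l_\varepsilon$ (in the case $a = l$) or $r = d^{l-1}_\varepsilon \circ s^{l-1}_\varepsilon$ (in the case $a = l - 1$). By inspection through each case in \Cref{Proposition:admissible horns}, at least one of these alternate retractions is defined whenever the horn is admissible: the excluded cases $\Lambda^{n, 1}_0$ and $\Lambda^{n, n}_n$ correspond exactly to the situations where neither $l + 1$ nor $l - 1$ provides a valid neighboring index of the same real/non-real type as $l$. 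A direct computation from the formulas in \Cref{Para: isovariant coface maps} and \Cref{Par: isovariant deg maps} shows that $r$ collapses the vertex $l \pm 1$ onto $l$ while fixing every other vertex, so its image is the face $\partial^{l \pm 1}_\varepsilon \Delta^{n, k} \subseteq \Lambda^{n, k}_l$ (since $l \pm 1 \neq l$). Thus $r$ factors as $r = \iota \circ \bar{r}$ for some $\bar{r}: \Delta^{n, k} \to \Lambda^{n, k}_l$, the candidate homotopy inverse.

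I then define the isovariant order-preserving poset map $h: [n]_k \times [1] \to [n]_k$ by $h(x, 0) = r(x)$, $h(x, 1) = x$ when $a = l$ (so $r(x) \leq x$ pointwise), or with the roles of $0$ and $1$ swapped when $a = l - 1$ (so $r(x) \geq x$); equivariance, order-preservation, and isovariance of $h$ all follow from the observation that $l$ and $l \pm 1$ lie on the same real/non-real side of $k$. This induces an elementary homotopy $H: \interval \Delta^{n, k} \to \Delta^{n, k}$ between $r$ and $\idd$, establishing $\iota \bar{r} \sim \idd_{\Delta^{n, k}}$. The critical step is showing that $H$ restricts to $\bar{H}: \interval \Lambda^{n, k}_l \to \Lambda^{n, k}_l$: a simplex of $\interval \Lambda^{n, k}_l$ at $[m]_{l'}$ corresponds to an isovariant order-preserving map $\alpha: [m]_{l'} \to \thk[n]_k$ whose projection to $[n]_k$ lies in $\Lambda^{n, k}_l$, hence misses some vertex pair $[j, e], [j, \sigma]$ with $j \neq l$. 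Since $r$ only introduces the vertex $l$ via the single collapse $l \pm 1 \mapsto l$, the image $(H \circ \alpha)([m]_{l'})$ still misses the pair $[j, e], [j, \sigma]$ whenever $j \neq l, l \pm 1$; the remaining subcase $j = l \pm 1$ is handled by observing that the projection of $\alpha$ already avoids those vertices and $r$ cannot reintroduce them. This yields $\bar{r} \iota \sim \idd_{\Lambda^{n, k}_l}$ and completes the equivalence.

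For $(2) \Rightarrow (1)$, the contrapositive via \Cref{Proposition:admissible horns} reduces the problem to showing that $\Lambda^{n, 1}_0 \hookrightarrow \Delta^{n, 1}$ and $\Lambda^{n, n}_n \hookrightarrow \Delta^{n, n}$ are not isovariant elementary homotopy equivalences. The base case $(n, k, l) = (1, 1, 1)$ exhibits the core obstruction: $\Lambda^{1, 1}_1 = \{[1, e]\}$ is a single real vertex, yet any isovariant map $\Delta^{1, 1} \to \{[1, e]\}$ would have to send the non-real vertices $[0, e], [0, \sigma]$ (with trivial isotropy) to the real vertex $[1, e]$ (with isotropy $G$), violating isovariance; hence no candidate homotopy inverse can exist, and the general non-admissible cases reduce to an analogous vertex-level obstruction by restricting to a suitable subcomplex containing the critical vertices. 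The main technical hurdle will be the restriction step in $(1) \Rightarrow (2)$: the naive use of $d^l_\varepsilon \circ s^a_\varepsilon$ coming directly from the admissibility definition would produce a map into the \emph{missing} face $\partial^l_\varepsilon \Delta^{n, k}$ rather than into $\Lambda^{n, k}_l$, so the essential insight is to replace this by the alternate retraction using a neighboring coface map $d^{l \pm 1}_\varepsilon$, which is precisely what \Cref{Proposition:admissible horns} ensures is available whenever the horn is admissible.
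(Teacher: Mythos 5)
Your treatment of the implication $(1) \Rightarrow (2)$ is essentially the same as the paper's: construct a retraction $\varphi_l$ using an ``alternate'' face--degeneracy composition that collapses a neighboring same-type vertex onto $l$, so that its image is a face of the horn rather than the missing face $\partialbf^l_\varepsilon\Delta^{n,k}$; then build the elementary homotopy from an order-preserving map $\thk([n]_k) \to [n]_k$ and restrict it to $\interval\Lambda^{n,k}_l$. In fact your formula $d^{l-1}_\varepsilon \circ s^{l-1}_\varepsilon$ for the case $a=l-1$ seems more careful than the paper's $d_1^l\circ s_1^{l-1}$ (for $l=k-1$), which collapses $l\mapsto l-1$ and therefore has image equal to the \emph{missing} face $\partialbf_1^l(\Delta^{n,k})$ rather than a face of the horn; your version collapses $l-1\mapsto l$ and lands in $\partialbf_1^{l-1}$, which is what the argument actually needs. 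Your characterization of simplices of $\interval\Lambda^{n,k}_l$ (maps to $\thk[n]_k$ whose projection misses a vertex pair $j\neq l$) and the case analysis showing $r$ cannot reintroduce the missed vertex are both sound.

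The implication $(2) \Rightarrow (1)$, however, has a genuine gap. The vertex-level obstruction you exhibit only works when $n=1$: there $\Lambda^{1,1}_1\cong\Delta^{0,0}$ and $\Lambda^{1,1}_0\cong\Delta^{0,1}$ each carry a single isotropy type, so no isovariant map from $\Delta^{1,1}$ exists. But for $n\geq 2$ the non-admissible horns $\Lambda^{n,1}_0$ and $\Lambda^{n,n}_n$ contain \emph{all} vertices of $\Delta^{n,1}$ and $\Delta^{n,n}$ respectively (each removed face omits exactly one vertex, and the union of the remaining faces covers every vertex), and isovariant maps into them do exist — for instance, the degeneracy collapsing $v_2^{\real}\mapsto v_1^{\real}$ factors through $\Lambda^{n,1}_0$. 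Thus ``no candidate homotopy inverse can exist'' is false in general, and the phrase ``the general non-admissible cases reduce to an analogous vertex-level obstruction by restricting to a suitable subcomplex'' does not explain why an elementary homotopy equivalence of the larger pair would restrict to, or be obstructed by, a subcomplex. The paper's argument is substantially harder at this point: assuming a homotopy inverse $\psi$ and an elementary homotopy $H$ with $H(-,0)=\psi\iota$, $H(-,1)=\idd$, it uses isovariance together with the pointwise ordering $H(v,0)\leq H(v,1)$ to force $\psi(v_0^{\nreal})=v_0^{\nreal}$ and $\psi(v_1^{\real})=v_1^{\real}$, locates the minimal $m$ with $\psi(v_m^{\real})=v_{m-1}^{\real}$, and shows that $H$ applied to $\interval$ of the face $d_0^{m-1}$ of the horn yields a simplex whose image is all of $[n]_1$, contradicting that $H(\interval\Lambda^{n,1}_0)\subseteq\Lambda^{n,1}_0$. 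You would need an argument at this level of detail (or a different invariant, such as the homotopy type of the fixed locus) to close the gap.
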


\begin{proof}
	Suppose first that $\Lambda^{n,k}_l$ is admissible and suppose aditionally that $0 \leq l \leq k-1$. Thus $\Lambda^{n,k}_l$ being admissible means that there exist $a \in  \{l,l-1\}$ such that \[ d_1^{l}\circ s_1^a(\Delta^{n,k}) = \partialbf_1^{l}(\Delta^{n,k}) = \im(d_1^l: \Delta^{n-1,k-1}\lto \Delta^{n,k}).\]
	We will prove the assertion when $a=l$, the case $a=l-1$ follows the same idea. Define $\varphi_l: \Delta^{n,k}\lto \Delta^{n,k}$ as the composition:
	
	\[\varphi_l \coloneqq \begin{cases}
		d_1^{l+1} \circ s_1^l \quad &\text{if } l<k-1 \\ 
		d_1^{l} \circ s_1^{l-1} \quad &\text{if } l=k-1
	\end{cases}\]
	First, notice that in each case, the isovariant simplicial set map $\varphi_l$ is isovariantly homotopic to the identity $\idd_{\Delta^{n,k}}$. To construct an explicit homotopy we can first define ${\widetilde{H}}_l: \thk([n]_k) \lto [n]_k$ by, 
	
	\begin{equation*}
		\Htilde_l\paren{[j,g],\delta} = 
		\begin{cases}
			[l,g] &\text{if  }	j=l+1, \ l<k-1, \ \delta=0 \\ 
			[l-1,g] &\text{if  } j=l, \ l=k-1, \ \delta=0 \\ 
			[j,g] &\text{otherwise}
		\end{cases} 
	\end{equation*}
	Notice that in any case $\widetilde{H}_l$ is well defined and it is an order-preserving isovariant map. Consider $H_l \coloneqq \Hom_{G\isovposets}(-, \widetilde{H}_l): I^{n,k} \lto \Delta^{n,k}$, where recall $I^{n,k}=\Hom_{G\isovposets}(-,\thk([n]_k)) \cong \interval \Delta^{n,k}$. Thus $H_l$ is the natural transformation defined at each object $\alpha \in \Hom_{G\isovposets}([a]_b, \thk([n]_k))$ by
	\[ (H_l)_{[a]_b}(\alpha) = \widetilde{H} \circ \alpha. \] 
	Hence if $[j,g] \in [a]_b$ we have that
	\begin{align*}
		\left(\left(H_l \circ \partial_0^{n,k} \right)_{[a]_b}(\alpha)\right)([j,g]) &= \left(H_l \circ \partial_0^{n,k}(\alpha)\right)([j,g]) \\ &=H_l(\alpha(-),0)([j,g]) \\ &=\widetilde{H}_l\left(\alpha([j,g]),0\right) \\
		&=	\begin{cases}
					[l,g'] &\text{if  }	\alpha([j,g])=[l+1,g'], \ l<k-1 \\ 
					[l-1,g'] &\text{if  } \alpha([j,g])=[l,g'], \ l=k-1, \\ 
					\alpha[j,g] &\text{otherwise}
			\end{cases}
	\end{align*}
On the other hand, using the definition of $\varphi_l$ and the coface and codegeneracy maps we have that 
	\begin{align*}
		\paren{(\varphi_l)_{[a]_b}(\alpha)}([j,g])  		&=	\begin{cases}
					d_1^{l+1}(s_1^l(\alpha([j,g]))) &\text{if  } \alpha([j,g])=[l+1,g'], \ l<k-1 
					\\
					d_1^l(s_1^{l-1}(\alpha([j,g]))) &\text{if  } \alpha([j,g])=[l,g'], \ l=k-1  \\
					\alpha[j,g] &\text{otherwise}
				\end{cases} \\
			&=  \begin{cases}
					[l,g'] &\text{if  } \alpha([j,g])=[l+1,g'], \ l<k-1 \\
					[l-1,g'] &\text{if  } \alpha([j,g])=[l,g'], \ l=k-1 \\
					\alpha[j,g] &\text{otherwise}
				\end{cases}
	\end{align*}
%
%
%
%
As a consequence we have that $H_l \circ \partial_0^{n,k} = \varphi_l$. Similar easy calculations show that 
	$H_l \circ \partial^{n,k}_1 = \idd$. We conclude that for $0\leq l\leq k-1$ the isovariant simplicial set map $H_l: \interval \Delta^{n,k} \lto \Delta^{n,k}$ is an isovariant homotopy from $\varphi_l$ to $\idd$:
	\begin{equation}\label{Equation:Homotpy H_l from phil to the identity}
		\begin{tikzcd}
			\Delta^{n,k} \arrow[dr,"\partial^{n,k}_0",swap] \arrow[drr, "\varphi_l", bend left=30] \\ &I^{n,k} \cong \interval \Delta^{n,k} \arrow[r,"H_l"] & \Delta^{n,k} \\ 
			\Delta^{n,k} \arrow[ru,"\partial^{n,k}_1"] \arrow[urr,"\idd",bend right=30,swap]
		\end{tikzcd}
	\end{equation}
	To finish the proof, observe that the image of $\varphi_l$ is contained in the $l$-horn $\Lambda^{n,k}_l$ and the image of the restriction of $H_l$ to $ \interval \Lambda_l^{n,k}$ is also contained in $\Lambda_l^{n,k}$: For the former assertion, suppose that $\alpha: [a]_b \lto [n]_k$ is an $(a,b)$-simplex of $\im(d_1^{l+1} \circ s_1^l)$ such that $\alpha$ factors as 
	\[ 	\begin{tikzcd}
		{[a]_b} \arrow[rr,"\alpha"] \arrow[dr,"\beta",swap] &&{[n]_k} \\ &{[n]_{k}} \arrow[ru,"d_1^{l+1} \circ s_1^l",swap] 
	\end{tikzcd}\]
	notice also that 
	\[d_1^{l+1}(s_1^l[j,g]) = \begin{cases}
		[j,g] \quad &\text{if  } j \neq l \\ 
		[j-1,g] \quad &\text{if  } j = l+1
	\end{cases}\]
	which implies that the image of $d_1^{l+1} \circ s_1^l \circ \beta$ is  contained $[n]_k \smallsetminus \{[l+1,e], [l+1,\sigma]\}$ and it contains the vertices $[l,e]$ and $[l,\sigma]$. In other words, $\alpha[a]_b \subseteq [n]_k \smallsetminus \{[l+1,e], [l+1,\sigma]\}$ and it contains $[l,e]$ and $[l,\sigma]$ which implies that $[n]_k \smallsetminus \{[l,e], [l,\sigma]\} \nsubseteq \alpha[a]_b $  so that $\alpha$ is an $(a,b)$-simplex of $\Lambda^{n,k}_l$ (see \Cref{Remark:simplices of isovariant horns}). Hence $\im(d_1^{l+1} \circ s_1^l) \subseteq \Lambda^{n,k}_l$. The second assertion follows a similar argument since any $(a,b)$-simplex $\alpha$ of the image of $H_l|_{\interval \Lambda_l^{n,k}}$ will factor as 
	\[ 	\begin{tikzcd}
		{[a]_b} \arrow[rr,"\alpha"] \arrow[dr,"\beta",swap] &&{[n]_k} \\ &{\thk[n]_{k}} \arrow[ru,"\widetilde{H}",swap] 
	\end{tikzcd}\]
	with the additional property that $\thk[n]_k \smallsetminus \{([l,e],\delta), ([l,\sigma], \delta)\} \nsubseteq \beta[a]_b$. Moreover this is also true for the case $l=k-1$. We conclude then that the restriction of the elementary homotopy $H_l$ from \Cref{Equation:Homotpy H_l from phil to the identity} is then an elementary homotopy from $\varphi_l \circ \iota$ to the identity $\idd_{\Lambda^{n,k}_l}$, where $\iota$ denotes the $l$-th horn inclusion into $\Delta^{n,k}$: 
	\[ 
	\begin{tikzcd}[column sep=huge]
		\Lambda^{n,k}_l \arrow[dr,"\partial^{n,k}_0",swap] \arrow[drr, "\varphi_l \circ \iota", bend left=30] \\ & \interval \Lambda^{n,k} \arrow[r,"H_l|_{\interval \Lambda^{n,k}}"] & \Lambda^{n,k}_l \\ 
		\Lambda^{n,k}_l \arrow[ru,"\partial^{n,k}_1"] \arrow[urr,"\idd",bend right=30,swap]
	\end{tikzcd}
	\]
	That is, $\varphi_l$ provides an isovariant left homotopic inverse of the inclusion $\iota$, a symmetric argument shows that $\varphi_l$ is also an isovariant right homotopic inverse to $\iota$. Notice that we have worked under the hypothesis that $0\leq l \leq k-1$. In the case where $k \leq l \leq n$ the proof follows the same idea with minor changes as one has to work with real face and real degeneracy maps of the form $d_0^i$ and $s_0^i$. \\ 
	Conversely, suppose now that the inclusion $\iota: \Lambda_l^{n,k} \lto \Delta^{n,k}$ is an isovariant elementary homotopy equivalence so that there exist an isovariant map $\psi : \Delta^{n,k} \lto \Lambda^{n,k}_l$ and elementary homotopies $H$ and $K$ from $\psi \circ \iota$ to $\idd_{\Lambda^{n,k}_l}$ and $\iota \circ \psi$ to $\idd_{\Delta^{n,k}}$ respectively. In particular
	\begin{gather}
		H \circ \partial_0 = H(-,0) = \psi \circ \iota \label{Eq:H0}\\ 
		H \circ \partial_1 = H(-,1) = \idd_{\Lambda^{n,k}_l} \label{Eq:H1}
	\end{gather}
	Suppose aditionally that $\Lambda^{n,k}_l$ is not admissible. Then by \Cref{Proposition:admissible horns} we have that $k=1$ and $l=0$ or $k=l=n$. In the first case we have then $\iota: \Lambda^{n,1}_0 \lto \Delta^{n,0}$ and recall from \Cref{Remark:Notationdeltank_in_terms_of_its_nondeg_simplices} that we write 
	\[\Delta^{n,1} = \left\langle v_0^{\nreal},v_1^\real,...,v_n^\real \right\rangle\]
	The isovariant simplicial set map $\psi$ must then send $v_0^{\nreal} \in \Delta^{n,1}$ to $v_0^{\nreal}$ in $\Lambda^{n,1}_0$ precisely because $\psi$ is isovariant. Moreover using the underlying partial order on the vertices of $\interval \Lambda^{n,1}_0$ we have that 
	\begin{equation}\label{Equation: homotopies psi inclusion and identity}
		H(v_i^{\real},0) \leq H(v_i^{\real},1) \quad \text{and} \quad H(v_0^{\nreal},0) \leq H(v_0^{\nreal},1)
	\end{equation}
	In particular 
	\[ (\psi \circ \iota) (v_1^{\real}) = H(v_1^{\real},0) \leq H(v_1^{\real},1) = v_1^{\real}\]
	and again by isovariance we must have that $\psi(v_1^{\real})=v_1^{\real}$. Let us then consider the set 
	\[ M= \left\{1<i\leq n \ / \ \psi(v_i^{\real}) \neq v_i^{\real} \right\}.\]
	Notice that $M$ cannot be empty, otherwise $\psi$ will send the whole $(n,1)$-simplex $\Delta^{n,1}$ to $\Lambda^{n,1}_0$ with the additional property that $\psi \circ \iota = \idd$ which is clearly not possible. Let $m = \min(M)$, by minimality, the fact that $\psi$ preserves the underlying order on vertices and \Cref{Equation: homotopies psi inclusion and identity} we have that $\psi(v^{\real}_{m}) = v_{m-1}^{\real}$ and clearly $m>1$. Consider the $(n-1,1)$-simplex $x = \left\langle v_0^{\nreal}, v_1^{\real},...,\widehat{v^{\real}_{m-1}},...,v_n^{\real}\right\rangle = \im(d_0^{m-1}:\Delta^{n-1,1} \lto \Delta^{n,1})$. Notice that $x$ clearly belongs to the horn $\Lambda^{n,1}_0$ and since the functor $\interval$ preserves monomorphisms, in particular it preserves inclusions so that we have that 
	\[\interval x \subseteq \interval \Lambda^{n,1}_0.\]
	More precisely, we have that 
	\[y=\interval x= \left\langle (v_0^{\nreal},0),(v_0^{\nreal},1),...,(\widehat{v_{m-1}^{\real}},0), (\widehat{v_{m-1}^{\real}},1),(v_m^{\real},0),(v_m^{\real},1),...,(v_n^{\real},0),(v_n^{\real},1) \right\rangle.\]
	Using \Cref{{Eq:H0},{Eq:H1}} we then have
	\begin{align*}
		H(y) &= \left\langle \psi(v_0^{\nreal}),v_0^{\nreal},\psi(v_1^{\real}), v_1^{\real},...,\psi(v_m^{\real}),v_m^{\real},...,\psi(v_n^{\real}), v_n^{\real}  \right\rangle
		\\ &=  \left\langle v_0^{\nreal},v_0^{\nreal},v_1^{\real},v_1^{\real},...,v_{m-1}^{\real},v_m^{\real},...,\psi(v_n^{\real}),v_n^{\real} \right\rangle
	\end{align*}
	where we have used the fact that $\psi(v_m^{\real})=v_{m-1}^{\real}$, $H(-,0)=\psi \circ \iota$ and $H(-,1)= \idd$. We conclude that $H(y) \cong \Delta^{n,1}$ which is a contradiction since the image of $H$ must be contained in $\Lambda^{n,1}_0$. The case where $k=l=n$ follows a similar idea. 
As a consequence the $l$-horn $\Lambda^{n,k}_l$ is admissible.  
\end{proof}


\subsection{Fiberwise Isovariant Homotopy}
We have used the cylinder defined in \Cref{Subsection:Exact cylinders} to define isovariant homotopies. Such a cylinder functor can also be used to define fiberwise homotopies. In other words, for any isovariant simplicial set $B$, the exact cylinder $\interval$ induces an exact cylinder functor for the over category $\ssetisov \downarrow B$. Let us first recall that there is a canonical forgetful functor 

\[
U_B: \ssetisov \downarrow B\simeq \Psh(G\Deltacat/B)\lto \ssetisov 
\] 
This functor is faithful and it preserves pullbacks and small colimits. In particular any morphism
\[ 
\begin{tikzcd}
	X && Y \\
	& B
	\arrow["f", from=1-1, to=1-3]
	\arrow["{p_X}"', from=1-1, to=2-2]
	\arrow["{p_Y}", from=1-3, to=2-2]
\end{tikzcd}
\]
in $\ssetisov\downarrow B$ is a monomorphism (resp. epimorphism) if and only if its image under $U_B$ is a monomorphism (resp. epimorphism), which amounts to say that $f$ is a monomorphism (resp. epimorphism). Moreover, commutative squares in $\ssetisov \downarrow B$ are pushouts (resp. pullback) squares if and only if its image by $U_B$ are pushout (resp. pullback) squares. Using this remark, the exact cylinder functor $\interval: \ssetisov \lto \ssetisov$ defines an exact cylinder functor \[ \interval_B: \ssetisov\downarrow B \lto \ssetisov \downarrow B \] where, if $X \overset{p_X}{\lto} B$ is an object in $\ssetisov \downarrow B$ then $\interval_B X$ is the cylinder defined by  

\[\begin{tikzcd}
	{X \coprod X} && {\interval_B X\coloneqq \interval X} && X \\
	&& {\interval B} \\
	&& B
	\arrow["{(p_X,p_X)}"', from=1-1, to=3-3]
	\arrow["{p_X}", from=1-5, to=3-3]
	\arrow["{(\partial_0^X,\partial_1^X)}", from=1-1, to=1-3]
	\arrow["{\rho^X}", from=1-3, to=1-5]
	\arrow["{\interval(p_X)}", from=1-3, to=2-3]
	\arrow["{\rho^B}", from=2-3, to=3-3]
\end{tikzcd}\]
The naturality of $\rho$ imply that for each $\varepsilon=0,1$:

\[\rho^B \circ \interval(p_X)\circ \partial_{\varepsilon}^X = \rho^B \circ \partial_{\varepsilon}^B \circ p_X = p_X. \]
As a consequence the two triangles in the diagram above commute.  

\begin{proposition}\label{Proposition:Exact cylinder of over category}
	For any $B$ isovariant simplicial set, the cylinder functor $\interval_B$ defined above is an exact cylinder functor for the over category $\ssetisov\downarrow B$.
\end{proposition}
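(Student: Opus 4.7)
The plan is to transfer each axiom of exact cylinder from $(\interval, \partial_0, \partial_1, \rho)$ on $\ssetisov$ to $(\interval_B, \partial_0^B, \partial_1^B, \rho^B)$ on $\ssetisov \downarrow B$ by using the properties of the forgetful functor $U_B$ collected in the paragraph preceding the statement: $U_B$ is faithful, preserves small colimits and pullbacks, and preserves and reflects monomorphisms. Crucially, the defining diagram gives $U_B \circ \interval_B = \interval \circ U_B$, so results on $\interval$ lift directly.

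First I would verify the data. For $(X,p_X)$ in $\ssetisov \downarrow B$, the map $\partial_\varepsilon^X : X \to \interval X$ becomes a morphism over $B$ because
\[
\bigl(\rho^B \circ \interval(p_X)\bigr)\circ \partial_\varepsilon^X \;=\; \rho^B \circ \partial_\varepsilon^B \circ p_X \;=\; p_X,
\]
using naturality of $\partial_\varepsilon$ and $\rho^B \circ \partial_\varepsilon^B = \idd_B$. Similarly $\rho^X$ is a morphism over $B$ tautologically, since the structure map on $\interval_B X$ is defined as $\rho^B \circ \interval(p_X)$ and $\rho^B \circ \interval(p_X) = p_X \circ \rho^X$ by naturality of $\rho$. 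The identity $\rho^X \circ \partial_\varepsilon^X = \idd_X$ lifts to $\ssetisov \downarrow B$ because $U_B$ is faithful. Naturality in $(X,p_X)$ of $\partial_\varepsilon^B$ and $\rho^B$ reduces through $U_B$ to the naturality of the corresponding transformations in $\ssetisov$.

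Next, $\interval_B$ commutes with small colimits and preserves monomorphisms. Colimits in $\ssetisov \downarrow B$ are created by $U_B$, and a morphism in the slice is mono if and only if its $U_B$-image is mono. Combined with the identity $U_B \circ \interval_B = \interval \circ U_B$ and \Cref{prop: I preserves colimits and monos}, both properties follow.

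Finally, for the pullback axiom, given a monomorphism $\iota: (X,p_X) \hookrightarrow (Y,p_Y)$ in the slice, the square relating $\partial_\varepsilon$ and $\interval_B(\iota)$ is sent by $U_B$ to the square relating $\partial_\varepsilon$ and $\interval(U_B\iota)$, which is a pullback in $\ssetisov$ by \Cref{lemma: cylinder functor and pullbacks}. Since $U_B$ reflects pullbacks, the original square is a pullback in $\ssetisov \downarrow B$. I do not expect any substantive obstacle: the whole argument is essentially a formal consequence of the good behaviour of $U_B$, and the only small care required is in unwinding the definition of $\interval_B$ on objects to check that each compatibility with the projection to $B$ holds, which in every case reduces to the naturality of $\partial_\varepsilon$ or $\rho$.
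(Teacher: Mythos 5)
Your proof is correct and takes essentially the same approach as the paper, which simply observes that exactness of $\interval_B$ follows from its definition, the exactness of $\interval$, and the stated properties of the forgetful functor $U_B$ (creation of colimits, reflection of monomorphisms and pullbacks); your write-up just spells out the individual verifications that the paper leaves implicit.
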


\begin{proof}
	This follows immediately from the definition of $\interval_B$, the properties of the forgetful functor $U_B$ and the fact that $\interval$ is an exact cylinder functor for $\ssetisov$. 
\end{proof}

From now on $B$ will denote a fixed isovariant simplicial set. 

\begin{definition}\label{fiberwise isovariant homotopy}
	Let $X \overset{p_X}{\lto} B$ and $Y \overset{p_Y}{\lto} B$ be two isovariant simplicial sets over $B$ and $f,g: X \lto Y$ two isovariant simplicial set maps over $B$. A \emph{fiberwise elementary isovariant homotopy} from $f$ to $g$ consist of an isovariant simplicial set map $H$ over $B$: 
	\[\begin{tikzcd}
		{\interval X} && Y \\
		& B
		\arrow["H", from=1-1, to=1-3]
		\arrow["{p_{\interval X}}"', from=1-1, to=2-2]
		\arrow["{p_Y}", from=1-3, to=2-2]
	\end{tikzcd}\]
	such that the usual diagram
	
	\[ 
	\begin{tikzcd}
		X \arrow[dr,"\partial^X_0",swap] \arrow[drr, "f", bend left=30] \\ &\interval X \arrow[r,"H"] & Y \\ 
		X \arrow[ru,"\partial^X_1"] \arrow[urr,"g",bend right=30,swap]
	\end{tikzcd}
	\]
	commutes as a diagram in $\ssetisov \downarrow B$. 
\end{definition}
\begin{remark}
	Just as in the last section, the quotient of $\Hom_{\ssetisov\downarrow B}(X,Y)$ by the smallest equivalence relation generated by fiberwise elementary isovariant homotopies will be denoted by $[X,Y]_{{\text{isov}_B}}$, if the context is clear sometimes we will simply write $[X,Y]_B$. 
%
%
\end{remark}	

\subsection{Isovariant Anodyne Extensions}\label{Section:anodyne extensions}
So far we have constructed an exact cylinder functor $\interval$ for the category of isovariant simplicial sets $\ssetisov$ and an exact cylinder $\interval_B$ for the over category of isovariant simplicial sets over a fixed object $B$. In this section we will show that the class of admissible horn inclusions can be used to define a class of  $I$-anodyne extensions in the sense of \cite[Définition 1.3.10]{Cisinski2016LPCMDTH} for the over category $\ssetisov\downarrow B$ where $B$ is assumed to be normal.  We will start by recalling some definitions about saturated class of morphisms and a few technical lemmas that will be used in the proof. The main conclusion will be that $\ssetisov \downarrow B$ has a Cisinski's type model structure provided that $B$ is normal.
 This idea is inspired from the model structure for Dendroidal Sets constructed by Moerdijk and Cisinski in \cite{Cisinski2011}.

\begin{lemma}\label{lemma: Nondegenerate sections of interval I}
	Let $0 < l \leq n$ and $0<k \leq n$ such that $l\neq k,k+1$. Then any isovariant order-preserving map $\theta: [n+1]_l \lto \thk([n]_k)$ in $G\isovposets$ must have repetitions, \ie there are $x,y \in [n+1]_l$ such that $\theta(x)= \theta(y)$.
\end{lemma}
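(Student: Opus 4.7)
The plan is to argue by contradiction: assume $\theta$ is injective (no repetitions) and deduce that $l\in\{k,k+1\}$, contradicting the hypothesis. The argument separates into bounds coming from the real and non-real parts of $[n+1]_l$, each obtained by comparing chain lengths in source and target.

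First, I would exploit isovariance to split the analysis. Since $\theta$ strictly preserves isotropy groups, the real vertices of $[n+1]_l$ (those indexed by $l,l+1,\ldots,n+1$, with isotropy $G$) must land among the elements of $\thk([n]_k)$ with isotropy $G$, namely $([i,e],\delta)=([i,\sigma],\delta)$ for $k\le i\le n$ and $\delta\in\{0,1\}$; likewise non-real vertices map to non-real vertices. This lets me treat the real and non-real restrictions of $\theta$ independently.

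Next, I would extract the inequality $l\geq k$ from the real part. The real part of $[n+1]_l$ is a single chain of $n-l+2$ vertices, while the real part of $\thk([n]_k)$ is an $(n-k+1)\times\{0,1\}$ product poset, whose longest chain has length $n-k+2$. An order-preserving injection of the former into the latter forces $n-l+2\le n-k+2$, i.e.\ $l\ge k$. Then I would extract $l\leq k+1$ from the non-real part. Each branch $[0,g]<[1,g]<\cdots<[l-1,g]$ of the non-real part of $[n+1]_l$ is a chain of $l$ vertices. The key observation is that two non-real elements $([i,g],\delta)$ and $([j,g'],\delta')$ of $\thk([n]_k)$ can only be comparable if $g=g'$, since non-real elements of $[n]_k$ lying in different $G$-orbits are themselves incomparable. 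Consequently, an order-injective image of a non-real branch must lie entirely in a single $g$-component of the non-real part of $\thk([n]_k)$, which is a $\{0,\ldots,k-1\}\times\{0,1\}$ grid with longest chain of length $k+1$; hence $l\le k+1$.

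Combining the two bounds gives $k\le l\le k+1$, i.e.\ $l\in\{k,k+1\}$, contradicting the hypothesis $l\ne k,k+1$, and so $\theta$ must have repetitions. The main obstacle I anticipate is the careful justification of the non-real bound, specifically the claim that an order-preserving injective image of one non-real branch is confined to a single $g$-component of the target: this requires combining the poset structure of $[n]_k$ (incomparability across $G$-orbits of non-real vertices) with equivariance of $\theta$. The counting arguments for chain lengths in the grid posets are then routine.
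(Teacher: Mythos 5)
Your proof is correct and follows essentially the same route as the paper: both arguments assume no repetitions, use isovariance to split the map into its real and non-real parts, and derive the contradictory bounds $l\ge k$ and $l\le k+1$ by comparing the chain of length $n-l+2$ (resp.\ $l$) in $[n+1]_l$ against the longest chain of length $n-k+2$ (resp.\ $k+1$) in the corresponding grid inside $\thk([n]_k)$. The only difference is that you make two implicit steps in the paper explicit—that isovariance forces real vertices to land in the real part, and that the image of a non-real branch must stay within a single $g$-component because cross-branch non-real elements are incomparable—which is a welcome clarification rather than a different argument.
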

\begin{proof}
	Suppose first that $l<k$ and suppose that $\theta: [n+1]_l \lto \thk([n]_k)$ does not have repetitions, in other words, $\theta$ preserves the order in a strict sense so that we have $\theta(0,\varepsilon)<\theta(1,\varepsilon)<\theta(2,\varepsilon)< \cdots < \theta(l-1,\varepsilon)$ for $\varepsilon=0,1$ and, 
	\begin{equation}\label{eq:strict inequalities proof non degenerate sections} \theta(l)<\theta(l+1)< \cdots < \theta(n+1).
	\end{equation}
	This not possible for the longest strict path in the real part of $\thk([n]_k)$ must have $n-k+2$ vertices and since $l<k$ we have that $n-l+2>n-k+2$ so that in the chain of strict inequalities (\ref{eq:strict inequalities proof non degenerate sections}) there must be an equality which is then a contradiction. Similarly if $l>k+1$ it is not possible to have strict inequalities $\theta(0,\varepsilon)<\theta(1,\varepsilon)<\theta(2,\varepsilon)< \cdots < \theta(l-1,\varepsilon), \ \ \varepsilon=0,1$,  
	since the longest strict path of non-real points in $\thk([n]_k)$ must have length $k+1$ and since $l>k+1$ these last inequalities cannot be all strict.   	
\end{proof}

%

\begin{lemma}\label{lemma: Non-degenerate simplices of R_n,k}
	Let  $0<k<n+1$. The isovariant simplicial set $I^{n,k}$ has $n-k+1$ non-degenerate $(n+1,k)$-simplices and $k$ non-degenerate $(n+1,k+1)$-simplices (up to composition with $\sigma$). The $(n+1,k)$-simplices can be identified with strings of morphisms in $\thk([n]_k)$ having the form 
	\begin{figure}[H]
		\caption{{}}
		\label{fig:nondegsimplices_Ink.1}
		\begin{center}
			\begin{tikzpicture}[transform shape, align=center, 
				minimum size=3mm,
				>=stealth,
				bend angle=45,scale=0.8, every node/.style={scale=0.9}]
				\tikzset{node distance = 0.25cm and 0.25cm}
				
				\node (100)  {\footnotesize $([1,e],0)$};
				\node (000) [above left =of 100,yshift=0.2cm] {\footnotesize $([0,e],0)$} edge[->] (100); 
				\node (200) [below right =of 100,rotate=-45] {$\ldots$} ; 
				\node (300) [below right = of 200] {\footnotesize$([k-1,e],0)$}; 
				\node (40)  [below right = of 300] {\footnotesize$([k,e],0)$};
				\node (310) [below left = of 40] {\footnotesize$([k-1,\sigma],0)$};
				\node (210) [below left = of 310] {$\udots$};
				\node (110) [below left = of 210] {\footnotesize$([1,\sigma],0)$};
				\node (010) [below left = of 110] {\footnotesize$([0,\sigma],0)$};
				\node (60) [right = of 40] {\footnotesize$\cdots$};
				\node (70) [right = of 60] {\footnotesize$([j,e],0)$}; 
				\node (71) [above right = of 70] {\footnotesize$([j,e],1)$};
				\node (81) [right = of 71] {\footnotesize$\ldots$};
				\node (91) [right = of 81] {\footnotesize$([n,e],1)$};
				
				\path[->] (100) edge (200);
				\path[->] (200) edge (300);
				\path[->] (300) edge (40);
				
				\path[->] (40) edge (60);
				\path[->] (60) edge (70);				
				\path[red,->] (70) edge (71);
				\path[red,->] (71) edge (81);
				\path[red,->] (81) edge (91);
				
				\path[->] (010) edge (110);
				\path[->] (110) edge (210);				
				\path[->] (210) edge (310);
				\path[->] (310) edge (40);
			\end{tikzpicture}
		\end{center}
	\end{figure}
	\noindent Similarly the $k$ non degenerate $(n+1,k+1)$-simplices of $I^{n,k}$ can be identified with strings of morphisms of the form:  
	\begin{figure}[H]
		\caption{{}}
		\label{fig:nondegsimplices_Ink.2}
		\begin{center}
			\begin{tikzpicture}[transform shape, align=center, 
				minimum size=3mm,
				>=stealth,
				bend angle=45,scale=0.8, every node/.style={scale=0.9}]
				\tikzset{node distance = 0.25cm and 0.25cm}
				
				\node (100)  {\footnotesize $([1,e],0)$};
				\node (000) [above left =of 100] {\footnotesize $([0,e], 0)$} edge[->] (100); 
				\node (200) [below right =of 100] {$\ddots$} ; 
				\node (300) [below right = of 200] {\footnotesize$([j,e],0)$}; 
				
				\node (310) [below = of 300] {\footnotesize$([j,\sigma],0)$};
				\node (210) [below left =of 310,xshift=0.1cm]{$\udots$};
				\node (110) [below left = of 210] {\footnotesize$([1,\sigma],0)$};
				\node (010) [below left = of 110] {\footnotesize$([0,\sigma],0)$};
				
				\node (301) [above right =of 300,yshift=1cm] {\footnotesize$([j,e],1)$};
				\node (4301) [below right =of 301] {$\ddots$};
				
				\node (401) [below right =of 4301] {\footnotesize $([k,e],1)$}; 
				
				\node (51) [right =of 401] {\footnotesize $([k+1,e],1)$};
				\node (61) [right =of 51] {\footnotesize $\cdots$};
				\node (71) [right =of 61] {\footnotesize $([n,e],1)$};
				
				\node (311) [below right =of 310, yshift=-0.7cm] {\footnotesize$([j,\sigma],1)$}; 
				\node (4311) [below left = of 401,xshift=0.5cm] {$\udots$};
				
				\path[->] (100) edge (200);
				\path[->] (200) edge (300);
				\path[red,->] (300) edge (301);
				\path[red,->] (301) edge (4301);
				\path[red,->] (4301) edge (401);
				
				\path[->] (010) edge (110);
				\path[->] (110) edge (210);
				\path[->] (210) edge (310);
				\path[red,->] (310) edge (311);
				\path[red,->] (311) edge (4311);
				\path[red,->] (4311) edge (401);
				
				\path[red,->] (401) edge (51);
				\path[red,->] (51) edge (61);
				\path[red,->] (61) edge (71);
			\end{tikzpicture}
		\end{center}
	\end{figure}
\end{lemma}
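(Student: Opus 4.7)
The plan is to interpret an $(m,l)$-simplex of $I^{n,k}$ as an isovariant order-preserving map $\theta \colon [m]_l \to \thk([n]_k)$. Because $G\Deltacat$ is skeletal, such a simplex is non-degenerate precisely when $\theta$ is injective on underlying sets. The preceding lemma then rules out most $(n+1,l)$-simplices: for $1 \leq l \leq n$ with $l \notin \{k, k+1\}$, every such $\theta$ has repetitions. The boundary values $l \in \{0, n+1, n+2\}$ are eliminated by an easy chain-length count, because under the hypothesis $0 < k \leq n$ neither the real part $\{k, \ldots, n\} \times \{0, 1\}$ nor any single-$g$ component of the non-real part of $\thk([n]_k)$ is long enough to receive an injection from $[n+1]_0$, $[n+1]_{n+1}$ or $[n+1]_{n+2}$. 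Thus non-degenerate $(n+1,l)$-simplices occur only for $l = k$ and $l = k+1$.

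For $l = k$, I would observe that the real subchain $[k,e] < [k+1,e] < \cdots < [n+1,e]$ of $[n+1]_k$ has length $n-k+2$, matching the length of the longest chain in the real part $\{k,\ldots,n\} \times \{0,1\}$ of $\thk([n]_k)$. Injectivity therefore forces the image to be a maximal chain, and these are parametrised by the index $j \in \{k,\ldots,n\}$ at which the staircase passes from thickening-level $0$ to level $1$, yielding $n-k+1$ possibilities. A brief order-theoretic check then pins $\theta([k,e])$ to $([k,e],0)$, which in turn forces the two non-real branches to land entirely in the level-$0$ copy of the non-real part of $[n]_k$; the remaining freedom is the swap of the two branches, identified by composition with $\sigma$.

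For $l = k+1$ the analysis is dual and slightly more delicate. Each non-real branch of $[n+1]_{k+1}$ is a chain of length $k+1$, forced by isovariance to embed into the non-real part of $\thk([n]_k)$ at a fixed $g \in G$, which is a poset isomorphic to $\{0,\ldots,k-1\} \times \{0,1\}$ whose longest chain has length $k+1$. Maximality then gives $k$ choices parametrised by the jump-index; equivariance determines the other branch from this one, and the choice of $g$ is collapsed by $\sigma$. Moreover maximality forces $\theta([k,e]) = ([k-1,g],1)$, so $\theta([k+1,e])$ must sit at thickening-level $1$, and the remaining real chain of length $n-k+1$ is compelled to travel straight from $([k,e],1)$ to $([n,e],1)$ without any extra freedom.

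The step I expect to be the main obstacle is the passage from \emph{injective image of a chain of maximum possible length} to \emph{maximal chain with prescribed minimum and maximum}, together with showing that the forced placement of the hinge vertex $[k,e]$ (respectively $[k+1,e]$) propagates via isovariance and the order relation to pin down the entire non-real (respectively real) part of $\theta$. Once these rigidity statements are in place, the count reduces to enumerating staircase chains in $\{a,\ldots,b\} \times \{0,1\}$, which is elementary.
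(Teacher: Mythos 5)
Your proposal follows the same route as the paper's own (very terse) proof: Yoneda identifies $(m,l)$-simplices of $I^{n,k}$ with isovariant maps $[m]_l \to \thk([n]_k)$, non-degenerate ones correspond to maximal paths without repetitions, and \Cref{lemma: Nondegenerate sections of interval I} together with a direct count of maximal staircase chains in $\thk([n]_k)$ gives the two families. The paper compresses this into a single appeal to the preceding lemma and the phrase ``easy to show''; your version spells out the same enumeration, so it is a filled-in version rather than a different argument (the only slip is listing $l = n+1$ among the values to eliminate, which when $k=n$ coincides with $l = k+1$ and is handled by your subsequent analysis, not by the chain-length count).
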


\begin{proof}
	By direct application of Yoneda's lemma and the fact that $I^{n,k}= \Hom_{G\isovposets}(-,\thk([n]_k))$ it is easy to show that the non-degenerate simplices of $I^{n,k}$ are in correspondance with maximal paths without repetitions in $\thk([n]_k)$. The consequence of the lemma then follows from \Cref{lemma: Nondegenerate sections of interval I}.   	
\end{proof}

\begin{definition}\label{Definition:isovariant anodyne extension}
	A morphism of isovariant simplicial sets $f:X \lto Y$ will be called \emph{isovariant anodyne extension} or $\interval$-anodyne extension if it belongs to the smallest saturated class of morphisms containing the set of isovariant admissible horn inclusions. Moreover, $f$ will be called \emph{isovariant fibration} if it has the right lifting property with respect to the class of isovariant anodyne extensions. \\
	An isovariant simplicial set $X$ is \emph{fibrant} (also called $\interval$-fibrant) if the canonical morphism from $X$ to the terminal isovariant simplicial set is an isovariant fibration.
\end{definition}

We will use the last two lemmas to state and proof an adapted version of \cite[Theorem 2.1.1]{gabriel2012calculus}. Consider the following three classes of morphisms in $\ssetisov$: 
\begin{enumerate}
	\item[$\bullet$] $\catcal{A} \coloneqq$ the set of admissible horn inclusions 
	\[  \Lambda^{n,k}_l \lto \Delta^{n,k}, \quad  n>0, 0\leq k\leq n+1, 0\leq l \leq n\] 
	
	\item[$\bullet$] $\catcal{B} \coloneqq$ the set of all inclusions 
	\[ \interval \partial \Delta^{n,k} \cup \{\varepsilon\}(\Delta^{n,k}) \lto \interval \Delta^{n,k}, \quad n\geq 0, \varepsilon=0,1, 0\leq k \leq n+1\] 
	\item[$\bullet$] $\catcal{C} \coloneqq$ the set of all inclusions 
	\[ \interval K \cup \{\varepsilon\}(L) \lto \interval L, \quad \varepsilon=0,1 \text{  and  } K \linj L \text{ is a normal inclusion}.\]
\end{enumerate}

\begin{lemma}\label{lemma: saturation of admissible horn inclusions is the same as saturation of cisinski's class}
	The classes $\catcal{A}$, $\catcal{B}$ and $\catcal{C}$ have the same saturation. More precisely  if we denote by $M_{\catcal{A}}$ the smallest saturated class containing $\catcal{A}$ and similarly $M_{\catcal{B}}$ and $M_{\catcal{C}}$ denote the smallest saturated classes of morphisms containing $\catcal{B}$ and $\catcal{C}$ respectively. Then 
	\[ M_{\catcal{A}} = M_{\catcal{B}} = M_{\catcal{C}}. \]
	Notice that here, the class $M_{\catcal{A}}$ is precisely the class of isovariant anodyne extensions.
\end{lemma}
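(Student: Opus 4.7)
The plan is to prove $M_\catcal{A} = M_\catcal{B} = M_\catcal{C}$ via four inclusions. The easy observations come first. For $\catcal{B} \subseteq \catcal{C}$, each boundary inclusion $\partial\Delta^{n,k} \linj \Delta^{n,k}$ is a normal monomorphism: the representable $\Delta^{n,k}$ is normal (\Cref{Remark:meaning dominant sections}), and by \enumref{Paragraph:Properties_normal_monomorphisms}{3} any monomorphism into a normal isovariant simplicial set is a normal monomorphism. Hence $M_\catcal{B} \subseteq M_\catcal{C}$ is immediate. For $M_\catcal{C} \subseteq M_\catcal{B}$, any normal monomorphism $K \linj L$ is, by \Cref{Paragraph:Properties_normal_monomorphisms}, a transfinite composition of pushouts of boundary inclusions. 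Since $\interval$ preserves colimits and monomorphisms (\Cref{prop: I preserves colimits and monos}), the pushout-product construction $(K \linj L) \longmapsto \bigl(\interval K \cup \{\varepsilon\}(L) \linj \interval L\bigr)$ preserves pushouts and transfinite compositions, placing each element of $\catcal{C}$ in $M_\catcal{B}$.

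Next, I would show $M_\catcal{A} \subseteq M_\catcal{C}$, which then combined with the previous step gives $M_\catcal{A} \subseteq M_\catcal{B}$. Given an admissible horn inclusion $\iota \colon \Lambda^{n,k}_l \linj \Delta^{n,k}$, the explicit homotopy $H_l$ constructed in the proof of \Cref{Lemma: characterization of isovariant horn inclusions} realizes $\iota$ as a deformation retract inclusion: $H_l$ restricts to $\interval \Lambda^{n,k}_l$ with image in $\Lambda^{n,k}_l$, and equals the identity on one end and $\varphi_l \circ \iota$ on the other. Using $H_l$ together with $\partial_{\varepsilon}^{n,k}$, one exhibits $\iota$ as a retract (in the arrow category of $\ssetisov$) of the inclusion $\interval \Lambda^{n,k}_l \cup \{\varepsilon\}(\Delta^{n,k}) \linj \interval \Delta^{n,k}$. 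Since $\Lambda^{n,k}_l \linj \Delta^{n,k}$ is a normal monomorphism (again by \enumref{Paragraph:Properties_normal_monomorphisms}{3}, as it is a monomorphism into the normal object $\Delta^{n,k}$), this retractor belongs to $\catcal{C}$, and saturated classes are stable under retracts.

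Finally, the remaining inclusion $M_\catcal{B} \subseteq M_\catcal{A}$ is the combinatorial heart of the argument. Fix $n, k$ and $\varepsilon \in \{0,1\}$, and set $E \coloneqq \interval \partial\Delta^{n,k} \cup \{\varepsilon\}(\Delta^{n,k})$. The strategy is to filter $\interval \Delta^{n,k} = I^{n,k}$ by a chain $E = Z_0 \subseteq Z_1 \subseteq \cdots \subseteq Z_N = I^{n,k}$, where each $Z_i \linj Z_{i+1}$ is the pushout of an admissible horn inclusion. The enumeration of the top-dimensional non-degenerate simplices of $I^{n,k}$ provided by \Cref{lemma: Non-degenerate simplices of R_n,k} gives exactly $n-k+1$ simplices of type $(n+1,k)$ and $k$ simplices of type $(n+1,k+1)$ (up to the swapping action), each indexed by the position $j$ where the chain jumps from height $0$ to height $1$ in $\thk([n]_k)$. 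Attaching these in increasing order of $j$ (with separate phases for the two families), one checks that at each stage the intersection of the new non-degenerate simplex with the previous $Z_i$ is exactly a horn missing the face corresponding to the jump; admissibility is verified by producing the required codegeneracy using the extra room in $\thk([n]_k)$.

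The principal obstacle is this last step: one has to carefully choose the order of attachment so that each missing face is the unique face not yet present, and then verify by the combinatorics of \Cref{paragraph: isovariant swapping maps} and \Cref{Para: isovariant coface maps} that the resulting horn is admissible in the sense of \Cref{Proposition:admissible horns}—in particular, that the pathological horns $\Lambda^{n+1,1}_0$ and $\Lambda^{n+1,n+1}_{n+1}$ never appear as attaching maps. This case analysis mirrors the corresponding filtration for dendroidal sets in \cite{Cisinski2011}, and is where the careful bookkeeping of real versus non-real coface and codegeneracy maps becomes essential.
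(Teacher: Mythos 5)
Your proposal is correct and follows the same global strategy as the paper: filter $\interval\Delta^{n,k}$ by attaching the top non-degenerate simplices via admissible horn inclusions to get $\catcal{B}\subseteq M_{\catcal{A}}$, use a retraction to get $M_{\catcal{A}}\subseteq M_{\catcal{C}}$, and use the skeletal filtration (plus $\interval$ commuting with colimits) to get $M_{\catcal{C}}\subseteq M_{\catcal{B}}$ with the reverse inclusion trivial since boundary inclusions are normal.

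The one genuinely different ingredient is your retraction argument for $M_{\catcal{A}}\subseteq M_{\catcal{C}}$. The paper constructs bespoke maps $q\colon [n]_k\to\thk[n]_k$ (a \enquote{zig-zag} section) and $r_l\colon\thk[n]_k\to[n]_k$ with $r_l\circ q=\idd$, broken into several cases depending on whether $0<l\le k-1$, $k\le l<n$, $l=0$, or $l=n$, and builds the retract diagram directly from them. You instead reuse the deformation-retract homotopy $H_l$ already constructed in the proof of \Cref{Lemma: characterization of isovariant horn inclusions}: since $H_l\circ\partial_1^{n,k}=\idd$, $H_l\circ\partial_0^{n,k}=\varphi_l$ with $\varphi_l(\Delta^{n,k})\subseteq\Lambda^{n,k}_l$, and $H_l(\interval\Lambda^{n,k}_l)\subseteq\Lambda^{n,k}_l$, taking $s=\partial_1^{n,k}$ as section and $r=H_l$ as retraction exhibits $\iota\colon\Lambda^{n,k}_l\linj\Delta^{n,k}$ as a retract of $\interval\Lambda^{n,k}_l\cup\{0\}\Delta^{n,k}\linj\interval\Delta^{n,k}$, which lies in $\catcal{C}$ because $\Lambda^{n,k}_l\linj\Delta^{n,k}$ is a monomorphism into the normal object $\Delta^{n,k}$. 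This is a cleaner route since it introduces no new combinatorics, at the cost of depending on the earlier lemma. One small caveat on wording: writing \enquote{$\partial_\varepsilon$} is slightly ambiguous here---the section must be the end map $\partial_1$ opposite to the chosen $\varepsilon=0$ (swapping if $H_l$ is built with the collapsing end at $1$ instead), since $H_l\circ\partial_0=\varphi_l\ne\idd$; as stated one might misread you as using the same index for both. Everything else checks out, including the observation that $M_{\catcal{C}}\subseteq M_{\catcal{B}}$ because the Leibniz construction $(K\linj L)\mapsto(\interval K\cup\{\varepsilon\}L\linj\interval L)$ preserves pushouts, retracts, and transfinite compositions when $\interval$ preserves colimits---this is exactly what the paper's Step 3 verifies by hand with its double-pushout diagrams.
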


\begin{proof} We will proceed in several steps. In most instances of the proof we will consider the case when $\varepsilon=1$,  the case $\varepsilon=0$ is analogous. 
	\begin{enumerate}[label={\emph{Step }\arabic*:}, wide, labelwidth=!, labelindent=0pt]
		\item Let us first show that $\catcal{B}\subseteq M_{\catcal{A}}$. This will then imply that $M_{\catcal{B}}\subseteq M_{\catcal{A}}$. The cases $k=0$ and $k=n+1$ follow directly from \cite[2.1.1]{gabriel2012calculus}.
		Let us then fix $0<k<n+1$ and consider the inclusion 
		\[ \varphi: \interval\partial\Delta^{n,k} \cup \left\{1\right\}\Delta^{n,k} \lto \interval\Delta^{n,k}. \]
		For $0\leq i \leq k-1$ we let $C_i$ be the image of the non-degenerate $(n+1,k+1)$-simplex $\gamma_i: \Delta^{n+1,k+1} \lto \interval\Delta^{n,k}$,  corresponding to the string of morphisms in $\thk([n]_k)$ given by \Cref{fig:nondegsimplices_Ink.2}.
		Let us denote such $C_i$ by
		\[C_i \coloneqq \left\langle (v_0^{\nreal},0),(v_1^{\nreal},0),...,(v_i^{\nreal},0), (v_i^{\nreal},1),...,(v_k^{\real},1),...,(v_n^{\real},1) \right\rangle \] 
		Similarly for $k\leq i \leq n$ we let $D_i$ to be the image of the non-degenerate $(n+1,k)$-simplex  $\delta_i: \Delta^{n+1,k} \lto \interval\Delta^{n,k}$ corresponding to the other string of morphisms in $\thk([n]_k)$ given in \Cref{fig:nondegsimplices_Ink.1}. Again we denote it as
		\[ D_i \coloneqq \left\langle (v_0^{\nreal},0),(v_1^{\nreal},0),...,(v_{k-1}^{\nreal},0), (v_k^{\real},0),...,(v_i^{\real},0),(v_i^{\real},1),...,(v_n^{\real},1) \right\rangle\]
		We then define 
		\[ F_i \coloneqq \begin{cases}
			C_i &\text{if  } 0\leq i \leq k-1 \\ D_i &\text{if  } k\leq i \leq n
		\end{cases}\]
		and $E_{-1}= \interval\partialbf\Delta^{n,k} \cup \left\{1\right\}\Delta^{n,k}$ and, for any $0\leq i \leq n$ we let 
		\[ E_i \coloneqq E_{i-1}\cup F_i.
		\]			
		For instance in the case where $n=k=2$, the object $E_0 = \interval\partial\Delta^{2,2} \cup \left\{1\right\}\Delta^{2,2} \cup C_0$ has a geometrical realization as in the \Cref{Figure:Geometrical realization of E0} below.
		
		\begin{figure}[H]
			\caption{Geometric realization of $E_0=\interval\partial\Delta^{2,2} \cup \left\{1\right\}\Delta^{2,2} \cup C_0$} \label{Figure:Geometrical realization of E0}
			\begin{center}
				\begin{tikzpicture}[line join = round, line cap = round,baseline=1ex,scale=0.9, transform shape, font=\scriptsize]
					\coordinate[label=left:{$([0,e],0)$}]  (000) at (0,2,1.5);
					\coordinate[label=above:{$([1,e],0)$}]  (100) at (1,2.5,0);
					\coordinate[label=above left:{$([0,e],1)$}]  (001) at (7,2,1.5);
					\coordinate[label=above:{$([1,e],1)$}]  (101) at (8,2.5,0);
					\coordinate[label=left:{$([2,e],0)$}]  (20) at (0,0,0);
					\coordinate[label=right:{$([2,e],1)$}]  (21) at (7,0,0);
					\coordinate[label=left:{$([0,\sigma],0)$}]  (010) at (-1,-2.5,0);
					\coordinate[label=above right:{$([1,\sigma],0)$}]  (110) at (0,-2,-1.5);
					\coordinate[label=right:{$([0,\sigma],1)$}]  (011) at (6,-2.5,0);
					\coordinate[label=above right:{$([1,\sigma],1)$}]  (111) at (7,-2,-1.5);
					
					\begin{scope}[decoration= {markings, mark=at position 0.5 with {\arrow{stealth}}}, xshift=0cm]
						\draw[postaction=decorate] (000)--(100);
						\draw[postaction=decorate] (100)--(101);
						\draw[postaction=decorate] (000)--(001);
						\draw[postaction=decorate] (001)--(101);
						\draw[postaction=decorate] (000)--(20);
						\draw[postaction=decorate] (001)--(21);
						\draw[postaction=decorate,lightgray] (100)--(20);
						\draw[postaction=decorate] (101)--(21);
						\draw[postaction=decorate,cyan] (20)--(21);
						
						\draw[postaction=decorate] (010)--(20);
						\draw[postaction=decorate,lightgray] (110)--(20);
						\draw[postaction=decorate] (010)--(110);
						
						\draw[postaction=decorate] (011)--(21);
						\draw[postaction=decorate,lightgray] (111)--(21);
						\draw[postaction=decorate] (011)--(111);
						
						\filldraw[fill=lightgray,fill opacity=0.1,dashed,draw=lightgray] (010)--(011)--(111)--cycle;
						
						\draw[postaction=decorate] (010)--(011);
						\draw[postaction=decorate,lightgray] (110)--(111);
					\end{scope}
					
					\filldraw[fill=lightgray,fill opacity=0.6] (001)--(21)--(101)--cycle;
					\filldraw[fill=lightgray,fill opacity=0.6] (011)--(21)--(111)--cycle;
					
					\fill (000) circle[radius=2pt];
					\fill (100) circle[radius=2pt];
					\fill (001) circle[radius=2pt];
					\fill (101) circle[radius=2pt];
					
					\fill (110) circle[radius=2pt];
					\fill (010) circle[radius=2pt];
					\fill (011) circle[radius=2pt];
					\fill (111) circle[radius=2pt];
					
					\filldraw[fill=lightgray,fill opacity=0.3] (000)--(21)--(001)--cycle;
					\filldraw[fill=lightgray,fill opacity=0.1] (000)--(101)--(001)--cycle;
					
					\filldraw[fill=lightgray,fill opacity=0.3] (010)--(21)--(011)--cycle;
					
					\fill[cyan] (20) circle[radius=2pt];
					\fill[cyan] (21) circle[radius=2pt];
					
				\end{tikzpicture}	
			\end{center}
		\end{figure}
		\noindent
		Notice that $E_n= \interval\Delta^{n,k}$. 
		In particular $\varphi$ can be then factored as a finite composition: 
		\[ \varphi: \interval\partial \Delta^{n,k} \linj E_0 \linj E_1 \linj \cdots \linj E_n = \interval\Delta^{n,k}\]
		it is then enough to show that each of the inclusions $E_{i-1}\linj E_i$ belongs to $M_{\catcal{A}}$ for this last class is closed under transfinite compositions which it will then imply that $\varphi \in M_{\catcal{A}}$. Moreover, by construction of the $E_i$'s, for any $0\leq i\leq n$ we have pushout diagrams
		
		\begin{equation}\label{eq: Pushout of E_i's}
			\begin{tikzcd}
				E_{i-1} \cap F_i \arrow[r,hookrightarrow] \arrow[d,hookrightarrow] &E_{i-1} \arrow[d] \\ F_i \arrow[r] &E_i
			\end{tikzcd}
		\end{equation}
		and since $M_{\catcal{A}}$ is also closed under pushouts, if $E_{i-1}\cap F_i \hookrightarrow F_i$ is in $M_{\catcal{A}}$ then so are the inclusions $E_{i-1}\lto E_i$. We will then show that for any $0\leq i \leq n$ the inclusions $E_{i-1}\cap F_i \hookrightarrow F_i$ are in $M_{\catcal{A}}$. Let us first consider the case when $0\leq i\leq k-1$ so that the squares $\ref{eq: Pushout of E_i's}$ are of the form: 
		
		\begin{equation}\label{eq: second Pushout of E_i's}
			\begin{tikzcd}
				E_{i-1} \cap C_i \arrow[r,hookrightarrow] \arrow[d,hookrightarrow] &E_{i-1} \arrow[d] \\ C_i \arrow[r] &E_i
			\end{tikzcd}
		\end{equation}
		Notice that we have commutative diagrams: 
		
		\begin{equation}\label{Eq:Diagram1and2Step1}
			\begin{tikzcd}
				\Delta^{n,k} \arrow[r,"d_1^0"] \arrow[d,"\simeq",swap] &\Delta^{n+1,k+1} \arrow[d,"\gamma_0"] \\
				\left\{1\right\}\Delta^{n,k} \arrow[r,hookrightarrow] & \interval \Delta^{n,k}
			\end{tikzcd} \ \  
			\begin{tikzcd}
				\Delta^{n,k} \arrow[r,"d_1^{k-1}"] \arrow[d,"\simeq",swap] &\Delta^{n+1,k+1} \arrow[d,"\gamma_{k-1}"] \\
				\left\{1\right\}\Delta^{n,k} \arrow[r,hookrightarrow] & \interval \Delta^{n,k}
			\end{tikzcd}
		\end{equation}
		%
		\begin{equation}\label{Eq:Diagram3Step1}
			\begin{tikzcd}
				\Delta^{n,k} \arrow[r,"d_1^j"] \arrow[d,"\gamma_{i-1}",swap] &\Delta^{n+1,k+1} \arrow[d,"\gamma_i"] \\
				\interval\Delta^{n-1,k-1} \arrow[r,"\interval(d_1^j)",swap] & \interval \Delta^{n,k}
			\end{tikzcd} \quad  0<j<i\leq k-1  
		\end{equation}
		
		\begin{equation}\label{Eq:Diagram4Step1}
			\begin{tikzcd}
				\Delta^{n,k} \arrow[r,"d_1^j"] \arrow[d,"\gamma_{i}",swap] &\Delta^{n+1,k+1} \arrow[d,"\gamma_i"] \\
				\interval\Delta^{n-1,k-1} \arrow[r,"\interval(d_1^{j-1})",swap] & \interval \Delta^{n,k}
			\end{tikzcd}   \quad  0<i+1<j<k+1
		\end{equation}
		
		\begin{equation}\label{Eq:Diagram5Step1}
			\begin{tikzcd}
				\Delta^{n,k} \arrow[r,"d_1^i"] \arrow[d,"d_1^i",swap] &\Delta^{n+1,k+1} \arrow[d,"\gamma_i"] \\
				\Delta^{n+1,k+1} \arrow[r,"\gamma_{i-1}",swap] & \interval \Delta^{n,k}
			\end{tikzcd}   \quad  0<i < k-1
		\end{equation}		
		We claim that that for any face map $d_1^j: \Delta^{n,k} \lto \Delta^{n+1,k+1}$ with $j\neq i+1$ the image of $\gamma_i \circ d_1^j$ is in $E_{i-1}$. Indeed if $j<i$ then diagram \ref{Eq:Diagram3Step1} implies that 
		\begin{align*}\im(\gamma_i \circ d_1^j) &= \im(\interval(d_1^j) \circ \gamma_{i-1}) \\&= \left\langle (v_0^{\nreal},0),(v_1^{\nreal},0),...,\widehat{(v_j^{\nreal},0)},(v_{j+1}^{\nreal},0),...,(v_i^{\nreal},0),(v_i^{\nreal},1),...,(v_n^{\real},1) \right\rangle \\ &\subseteq \interval \partialbf \Delta^{n,k} \subseteq E_{-1} \subseteq E_{i-1}
		\end{align*}
		Similarly if $j=i$ the diagrams \ref{Eq:Diagram1and2Step1} and \ref{Eq:Diagram5Step1} also justify the claim. In the latter case, \ie, for $0<i=j<k-1$ we then have that 
		\begin{align*}
			\im(\gamma_i \circ d_1^i) &= \im(\gamma_{i-1} \circ d_1^{i}) \\&= \left\langle (v_0^{\nreal},0),(v_1^{\nreal},0),...,{(v_{i-1}^{\nreal},0)},\widehat{(v_i^{\nreal},0)}, ,(v_{i}^{\nreal},1),...,(v_{k-1}^{\nreal},1),(v_k^{\real},1),...,(v_n^{\real},1) \right\rangle \\ &\subseteq \interval \partialbf \Delta^{n,k} \subseteq E_{-1} \subseteq E_{i-1}
		\end{align*}
		An analogous calculation using diagram \ref{Eq:Diagram4Step1} justifies the claim in the case where $j>i+1$. Finally notice that the image of $\gamma_i \circ d_1^{i+1}$ cannot be in $\interval \partial^{n,k}$ because 
		\[ \im(\gamma_i \circ d_1^{i+1}) = \left\langle (v_0^{\nreal},0),(v_1^{\nreal},0),...,(v_i^{\nreal},0),\widehat{(v_i^{\nreal},1)},(v_{i+1}^{\nreal},1),...,(v_k^{\real},1),...,(v_n^{\real},1) \right\rangle \]
		so that $\im(\gamma_{i+1} \circ d_1^{i+1})$ projects onto $\left\langle v_0^{\nreal},v_1^{\nreal},...,v_{k-1}^{\nreal},v_k^{\real},...,v_n^{\real} \right\rangle \simeq \Delta^{n,k}$ under the projection $\rho: \interval\Delta^{n,k} \lto \Delta^{n,k}$. Moreover $\gamma_i \circ d_1^{i+1}: \Delta^{n,k} \lto \interval\Delta^{n,k}$ cannot be a face of any $C_j \subseteq \interval\Delta^{n,k}$ with $j=0,1,...,i-1$ since $\im(\gamma_i \circ d_1^{i+1})$ contains the vertex $(v_i^{\nreal},0)$. We conclude then that $\im(\gamma_i \circ d_1^{i+1})$ is also not contained in $E_{i-1}$. Thus, the inclusion $E_{i-1} \cap C_i \lto C_i$ is of the form $\Lambda^{n+1,k+1}_{i+1} \lto \Delta^{n+1,k+1} \simeq C_i$. Notice that this last horn inclusion is always admissible since if $k=n$ then $i+1$ is at most $n$ and in such case $\Lambda^{n+1,n+1}_n$ is always admissible by \Cref{Proposition:admissible horns}, in any other case the horn $\Lambda^{n+1,k+1}_{i+1}$ is still admissible. 
		
		The proof that $E_{i-1}\cap F_i \linj F_i$ is also equivalent to an admissible horn inclusion when $k\leq i \leq n$ follows essentially the same idea as in the case $0\leq i \leq k-1$. We conclude that such inclusions $E_{i-1}\cap F_i \linj F_i$ are always in $M_{\Acal}$ which implies that $\catcal{B}\subseteq M_{\catcal{A}}$. 
		
		\item Let us now show that $\Acal \subseteq M_{\catcal{B}}$. As before the cases where $k=0$ and $k=n+1$ are just direct applications of the classical argument \cite[\nopp II 2.1.1]{gabriel2012calculus} so let us fix $\iota: \Lambda^{n,k}_l \linj \Delta^{n,k}$ an admissible horn inclusion with $0<k<n+1$. We will prove that $\iota$ is a retract of some map in $\catcal{B}$. The closure under retracts will then imply that $\Acal \subseteq M_{\catcal{B}}$ and thus by saturation $M_{\Acal}  \subseteq M_{\catcal{B}}$. Suppose first that $0\leq l\leq k-1$. The fact that $\iota$ is an admissible horn inclusion implies that if $l=0$ then $k$ must be different than $1$. We will then consider several cases, the proofs in each case will follow essentially the same idea: 
		\begin{enumerate}[label=\textbf{(\alph*)}, ref=(\alph*)]
			\item \label{Item:Step2:a} Let us then consider first the case where $l>0$.  Define $q: [n]_k \lto \thk[n]_k$ by sending
			\[q([i,g]) = 
			\begin{cases}
				([i,g],0) &\text{ if } 0\leq i <l \\
				([i,g],1) &\text{ if } i \geq l
			\end{cases} \]
			and $r_l: \thk[n]_k \lto [n_k]$ by 
			\[ r_l([i,g],\delta) = 
			\begin{cases}
				[i,g] &\text{ if } 0\leq i\leq l \text{  and  } \delta=0 \ \text{or} \ k\leq i \leq n \ \text{and} \ \delta=0 \\
				[l,g] &\text{ if } l < i \leq k-1 \text{  and   } \delta=0 \\ 
				[l,g] &\text{ if } i\leq l \text{  and  } \delta=1 \\ 
				[i,g] &\text{ if } l<i \text{  and  } \delta=1
			\end{cases}\]
			Then notice that 
			\[ (r_l \circ q )[i,g] = 
			\begin{cases}
				r_l([i,g],0) &\text{if } 0\leq i<l \\
				r_l([i,g],1) &\text{if } i>l \\ 
				r_l([l,g],1) &\text{if } i=l
			\end{cases} = [i,g]\]
			So that $r_l \circ q $ is precisely the identity $\idd_{[n]_k}$. These two maps are clearly isovariant and order preserving and they induce isovariant simplicial set maps $r_l:\interval\Delta^{n,k} \lto \Delta^{n,k}$ and $q:\Delta^{n,k} \lto \interval\Delta^{n,k}$ such that $r_l \circ q = \idd_{\Delta^{n,k}}$. We claim that the image of $\Lambda^{n,k}_l \subseteq \Delta^{n,k}$ by $q$ must be in the union $\interval(\partialbf \Delta^{n,k})\cup \partialbf \interval(\Delta^{n,k})$. Indeed, we recall that 
			\[\Lambda^{n,k}_l = \partialbf\Delta^{n,k}\smallsetminus \left\langle v_0^c,v_1^c,...,\widehat{v_l^c},...,v_{k-1}^c,v_k^r,...,v_n^r\right\rangle\]
			and notice that if $j<l$, then
			\begin{align*}
				q(\partialbf_1^j\Delta^{n,k}) &= q\left(\left\langle v_0^c,...,\widehat{v_j^c},...,v_l^c,...,v_{k-1}^c,v_k^r,...,v_n^r \right\rangle\right) \\
				&= \left\langle (v_0^c,0),...,\widehat{(v_j^c,0)},...,(v_{l-1}^c,0),(v_l^c,1),...,(v_k^r,1),...,(v_n^r,1) \right\rangle \\
				&\subseteq \interval(\partialbf \Delta^{n,k})\cup \partialbf \interval(\Delta^{n,k})
			\end{align*}
			similarly, if $l<j\leq k-1$ we have that
			\begin{align*}
				q(\partialbf_1^j\Delta^{n,k}) &= q\left(\left\langle v_0^c,...,{v_l^c},...,\widehat{v_j^c},...,v_{k-1}^c,v_k^r,...,v_n^r \right\rangle\right) \\
				&= \left\langle (v_0^c,0),...,{(v_{l-1}^c,0)},(v_l^c,1)...,\widehat{(v_j^c,0)},...,(v_k^r,1),...,(v_k^r,1),...,(v_n^r,1) \right\rangle \\
				&\subseteq \interval(\partialbf \Delta^{n,k})\cup \partialbf \interval(\Delta^{n,k})
			\end{align*}
			Similar calculations show that $q(\partialbf_0^j\Delta^{n,k}) \subseteq \interval(\partialbf \Delta^{n,k})\cup \partialbf \interval(\Delta^{n,k})$ for $k\leq j\leq n$. Furthermore, none of the faces of $\Lambda^{n,k}_l$ is sent into $\{0\}\Delta^{n,k}$ as $l>0$. Each of these faces are actually contained in $\interval \Lambda^{n,k}_l\cup \{1\}\Delta^{n,k}$ since $(v_l^c,1)$ is reached by each face of $\Lambda^{n,k}_l$ under the map $q$. In other words,
			\[ \im\left(q|_{\Lambda^{n,k}_l}\right)\subseteq \interval \Lambda^{n,k}_l\cup \{1\}\Delta^{n,k}. \]
			On the other hand, the image of $\interval \Lambda^{n,k}_l\cup \{1\}\Delta^{n,k}$ by $r_l$ fits into $\Lambda^{n,k}_l$ since as $l>0$ we have that: 
			
			\begin{align*}
				r_l(\{1\}\Delta^{n,k}) 
				&= r_l\left(\left\langle (v_0^c,1),(v_1^c,1),...,(v_l^c,1),...,(v_k^r,1),...,(v_n^r,1) \right\rangle\right) \\
				&\subseteq \im(d_1^0:\Delta^{n-1,k-1} \lto \Delta^{n,k}) \\
				&=\partialbf_1^0(\Delta^{n,k})\subseteq \Lambda_l^{n,k}. 
			\end{align*}
			Notice that in the last two lines we have used the fact that $l>0$. Moreover $r_l$ also sends $\interval\Lambda^{n,k}_l$ into $\Lambda^{n,k}_l$ since for any $u: \Delta^{m,l}\lto \interval\Delta^{n,k}$ injective such that $(v_l^c,0)$ is contained in the image of $u$ then, by definition of $r_l$, the vertex $v_l^c$ is contained in the image of the composite $r_l\circ u$. Furthermore, this last composition is not surjective so that $r_l$ sends each simplex of $\interval \Lambda^{n,k}_l$ into $\Lambda^{n,k}_l$ by \ref{Remark:simplices of isovariant horns}. Therefore, we have a commutative diagram:
			
			\begin{equation}\label{Equation:diagram retract}
				\begin{tikzcd}
					{\Lambda^{n,k}_l} && {\interval\Lambda^{n,k}_l\cup \{1\}\Delta^{n,k}} && {\Lambda^{n,k}_l} \\
					\\
					{\Delta^{n,k}} && {\interval\Delta^{n,k}} && {\Delta^{n,k}}
					\arrow["{\interval{1}_{\Delta^{n,k}}}", bend right=24pt, from=3-1, to=3-5,swap]
					\arrow["{\interval{1}_{\Lambda^{n,k}_l}}", bend left=24pt, from=1-1, to=1-5]
					\arrow["q", from=3-1, to=3-3]
					\arrow["{r_l}", from=3-3, to=3-5]
					\arrow["{r_l}"', from=1-3, to=1-5]
					\arrow["\iota"', hook, from=1-1, to=3-1]
					\arrow[hook, from=1-3, to=3-3]
					\arrow["\iota", hook, from=1-5, to=3-5]
					\arrow["q"', from=1-1, to=1-3]
			\end{tikzcd}\end{equation}
			As a consequence, $\iota$ is a retract of a morphism in $M_{\catcal{B}}$ and thus it belongs to such class of morphisms.
			
			\item \label{Item:Step2:b} Let us now consider the case where $k \leq l <n$. We will consider the cases $l=0$ and $l=n$ separately. The idea is more or less similar except that this time we consider $r_l: \thk[n]_k \lto [n_k]$ defined by 
			\[ r_l([i,g],\delta) = 
			\begin{cases}
				[i,g] &\text{ if } 0\leq i< l \text{  and  } \delta=0  \\
				[l,g] &\text{ if } l \leq i \leq n \text{  and   } \delta=0 \\ 
				[i,g] &\text{ if } 0\leq i\leq k-1 \text{  and  } \delta=1 \text{  or  } l<i\leq n \text{  and  } \delta=1 \\ 
				[l,g] &\text{ if } k\leq i\leq l \text{  and  } \delta=1
			\end{cases}\]
			and $q: [n]_k \lto \thk[n]_k$ is given by
			\[q([i,g]) = 
			\begin{cases}
				([i,g],0) &\text{ if } 0\leq i \leq l-1 \\
				([i,g],1) &\text{ if } l\leq i \leq n
			\end{cases} \]
			
			Again, by definition we have that $r_l \circ q = \idd_{[n]_k}$ and these two maps induce isovariant simplicial set maps $r_l: \interval\Delta^{n,k}\lto \Delta^{n,k}$ and $q: \Delta^{n,k}\lto \interval \Delta^{n,k}$ such that $r_l\circ q = \idd_{\Delta^{n,k}}$. The rest of the proof follows in a similar way as above.
			\item \label{Item:Step2:c} Let us now consider the last two remaining cases, \ie when $l=0$ and $l=n$. We will only prove the former as the argument is still analogous to that in \ref{Item:Step2:a}. Notice that since the horn is admissible $k$ must be different than $1$. In this case the fixed horn inclusion $\iota: \Lambda^{n,k}_0$ will be a retract of a morphism in the class $\catcal{B}$ with corresponding $\varepsilon=0$.  We can then define $q:[n]_k \lto \thk[n]_k$ by
			\[ q([i,g]) = \begin{cases}
				([0,g],0) &\text{  if  } i=0 \\
				([i,g],1) &\text{  else  }
			\end{cases}
			\]
			and $r_0: \thk[n]_k \lto [n]_k$ by 
			
			\[
			r_0([i,g],\delta)= \begin{cases}
				[0,g] &\text{  if  } 0\leq i\leq k-1 \text{  and  } \delta=0 \\ 
				[i,g] &\text{  else  }
			\end{cases}
			\]
			Then it is clear that $r_0 \circ q = \idd_{[n]_k}$. Just as in \ref{Item:Step2:a} and \ref{Item:Step2:b} these two maps induce isovariant simplicial set maps $q:\Delta^{n,k}\lto \interval \Delta^{n,k}$ and $r_0:\interval \Delta^{n,k}\lto \Delta^{n,k}$ such that $r_0 \circ q = \idd_{\Delta^{n,k}}$ and, by definition of $q$ we have that for any $0<j\leq k-1$ we have that 
			\begin{align*}
				q(\partialbf_1^j\Delta^{n,k}) &= q\left(\left\langle v_0^c,...,\widehat{v_j^c},...,v_{k-1}^c,v_k^r,...,v_n^r \right\rangle\right) \\
				&= \left\langle (v_0^c,1), (v_1^c,1),...,\widehat{(v_j^c,1)},(v_{k-1}^c,1),(v_k^r,1),...,(v_n^r,1) \right\rangle \\
				&\subseteq \interval(\partialbf \Delta^{n,k})\cup \{0\}(\Delta^{n,k})
			\end{align*}
			and similarly for any $k<j\leq n$ we also have that $q(\partialbf_0^j  \Delta^{n,k}) \subseteq \interval(\partialbf \Delta^{n,k})\cup \{0\}(\Delta^{n,k})$. This implies that 
			\[ \im\left(q|_{\Lambda^{n,k}_0}\right)\subseteq \interval \Lambda^{n,k}_0\cup \{0\}\Delta^{n,k}. \]
			On the other hand, 
			\begin{align*}
				r_0(\{0\}\Delta^{n,k}) 
				&= r_0\left(\left\langle (v_0^c,0),(v_1^c,0),...,(v_k^c,0),...,(v_n^r,0) \right\rangle\right) \\ 
				&= \left\langle v_0^c,v_0^c,...,v_k^r,...,v_n^r \right\rangle \\
				&\subseteq \im(d_1^1:\Delta^{n-1,k-1} \lto \Delta^{n,k}) \\
				&=\partialbf_1^1(\Delta^{n,k})\subseteq \Lambda_0^{n,k}. 
			\end{align*}
			Again, $r_0$ sends $\interval\Lambda_0^{n,k}$ into $\Lambda^{n,k}_0$ by the same reasons as in case \ref{Item:Step2:a}. Besides, the rest of the proof also follows as in \ref{Item:Step2:a} with the diagram \ref{Equation:diagram retract} having $l=0$ and $\varepsilon=0$ instead of $\varepsilon=1$.
		\end{enumerate}
		We conclude that $M_{\catcal{A}} \subseteq M_{\catcal{B}}$. 	       
		\item Let us now show that $\catcal{C}\subseteq M_{\catcal{B}}$. Let $K \lto L$ be a normal inclusion. Recall that from \cite[\nopp 8.1.32]{Cisinski2016LPCMDTH} the skeleton functors come with a filtration 
		\[\begin{tikzcd}
			{\emptyset = \text{Sk}_{-1}} & {\text{Sk}_1} & {\text{Sk}_2} & \cdots & {\text{Sk}_n} & {\text{Sk}_{n+1}} & \cdots
			\arrow[hook, from=1-1, to=1-2]
			\arrow[hook, from=1-2, to=1-3]
			\arrow[hook, from=1-3, to=1-4]
			\arrow[hook, from=1-4, to=1-5]
			\arrow[hook, from=1-5, to=1-6]
			\arrow[hook, from=1-6, to=1-7]
		\end{tikzcd}\]
		such that for any isovariant simplicial set $L$ we have that 
		\[ L \simeq \colim_{n}\sk_n(L) = \bigcup_{n\geq -1}\sk_n(L).\]
		Since $\interval$ is an exact cylinder functor and in particular it commutes with colimits we have canonical isomorphisms
		\[
		\begin{split} 
			\interval K \cup \left\{\varepsilon\right\}(L) \simeq \interval \left( K \cup \sk_{-1}(L) \right)\cup \{\varepsilon\}(L) \\ 
			\interval L \simeq  \bigcup_{n \geq -1} \left(\interval(K \cup \sk_n(L)) \cup \{\varepsilon\}(L) \right)			
		\end{split}
		\]
		Thus, to show that the map $\interval K \cup \{\varepsilon\}(L) \lto \interval L$ belongs to $M_{\catcal{B}}$ it suffices to show that for any $n\geq -1$ the maps
		\[\varphi_n: \interval (K \cup \sk_n(L)) \cup \{\varepsilon\}(L) \lto \interval (K \cup \sk_{n+1}(L)) \cup \{\varepsilon\}(L)\]
		are in $M_{\catcal{B}}$. Let $\Sigma^{n,k}$ denote the set of isomorphism classes of non-degenerate $(n,k)$-simplices of $L$ that are not contained in $K$ and $\Sigma^n = \coprod_{0\leq k \leq n+1}\Sigma^{n,k}$. By \cite[Lemme 8.1.34.]{Cisinski2016LPCMDTH} we have a pushout square: 
		\[\begin{tikzcd}[row sep=small,column sep=small]
			{\displaystyle \coprod_{\Sigma^{n+1}}(\partial\Delta^{n+1,k})_x} && {K \cup \text{Sk}_n(L)} \\
			\\
			{\displaystyle \coprod_{\Sigma^{n+1}}(\Delta^{n+1,k})_x} && {K \cup \text{Sk}_{n+1}(L)}
			\arrow[from=1-1, to=3-1]
			\arrow[from=1-3, to=3-3]
			\arrow["{x\circ -}", from=3-1, to=3-3]
			\arrow[from=1-1, to=1-3]
			\arrow["\lrcorner"{anchor=center, pos=0.125, rotate=180}, draw=none, from=3-3, to=1-1]
		\end{tikzcd}\]
		Since $\interval$ preserves small colimits from the last pushout diagram we get a new pushout square: 	 
		\[\begin{tikzcd}[row sep=small,column sep=small]
			{\displaystyle \coprod_{\Sigma^{n+1}} \interval (\partial\Delta^{n+1,k})_x} && {\interval\left(K \cup \text{Sk}_n(L)\right)} \\
			\\
			{\displaystyle \coprod_{\Sigma^{n+1}}\interval (\Delta^{n+1,k})_x} && {\interval\left(K \cup \text{Sk}_{n+1}(L)\right)}
			\arrow[from=1-1, to=3-1]
			\arrow[from=1-3, to=3-3]
			\arrow[from=3-1, to=3-3]
			\arrow[from=1-1, to=1-3]
			\arrow["\lrcorner"{anchor=center, pos=0.125, rotate=180}, draw=none, from=3-3, to=1-1]
		\end{tikzcd}\]
		Furthermore the left and right vertical arrows both factorize respectively through 
		\[
		\coprod \interval(\partial\Delta^{n+1,k})_x \cup \{\varepsilon\}(\Delta^{n+1,k})_x \text{  and  } \interval (K \cup \sk_{n}(L)) \cup \{\varepsilon\}(K\cup \sk_{n+1}(L))
		\]
		so that we get two commutative squares
		\begin{equation}\label{Equation:Double square pushout Step3}
			\begin{tikzcd}[column sep=tiny]
				{\displaystyle \coprod_{\Sigma^{n+1}} \interval(\partial\Delta^{n+1,k})_x} && {\interval\left(K \cup \text{Sk}_n(L)\right)} \\
				{\displaystyle \coprod_{\Sigma^{n+1}} \interval(\partial\Delta^{n+1,k})_x \cup \{\varepsilon \}(\Delta^{n+1,k})_x} && {\interval\left(K \cup \text{Sk}_n(L)\right) \cup \{\varepsilon\}(K\cup \text{Sk}_{n+1}(L))} \\
				{\displaystyle \coprod_{\Sigma^{n+1}} \interval(\Delta^{n+1,k})_x} && {\interval\left(K \cup \text{Sk}_{n+1}(L)\right)}
				\arrow[from=3-1, to=3-3]
				\arrow[from=1-1, to=1-3]
				\arrow[from=1-1, to=2-1]
				\arrow[from=2-1, to=3-1]
				\arrow[from=2-1, to=2-3]
				\arrow[from=1-3, to=2-3]
				\arrow[from=2-3, to=3-3]
			\end{tikzcd}
		\end{equation}
		where the outer square is a pushout. We claim that the top square in \ref{Equation:Double square pushout Step3} is also a pushout diagram. Indeed, since the functor $\{\varepsilon\}$ also preserves colimits and monomorphisms, we have two pushout squares 
		\begin{equation}\label{Equation:step3 double pushout 1}
			\begin{tikzcd}[row sep=small,column sep=tiny]
				{\displaystyle \coprod_{\Sigma^{n+1}} \{\varepsilon\}(\partial\Delta^{n+1,k})_x} && {\{\varepsilon\}\left(K \cup \text{Sk}_n(L)\right)} && {\interval(K \cup \text{Sk}_{n}(L))} \\
				\\
				{\displaystyle \coprod_{\Sigma^{n+1}}\{\varepsilon\}(\Delta^{n+1,k})_x} && {\{\varepsilon\}\left(K \cup \text{Sk}_{n+1}(L)\right)} && {\interval(K \cup \text{Sk}_{n}(L)) \cup \{\varepsilon\}\left(K \cup \text{Sk}_{n+1}(L)\right) }
				\arrow[from=1-1, to=3-1]
				\arrow[from=1-3, to=3-3,hookrightarrow]
				\arrow[from=3-1, to=3-3]
				\arrow[from=1-1, to=1-3]
				\arrow["\lrcorner"{anchor=center, pos=0.125, rotate=180}, draw=none, from=3-3, to=1-1]
				\arrow[from=1-3, to=1-5,hookrightarrow]
				\arrow[from=3-3, to=3-5]
				\arrow["\lrcorner"{anchor=center, pos=0.125, rotate=180}, draw=none, from=3-5, to=1-3]
				\arrow[from=1-5, to=3-5]
		\end{tikzcd}\end{equation}
		Moreover, the outer square in this last diagram also fits in a diagram
		\begin{equation}\label{Equation:Step3 double pushout second diagram}
			\begin{tikzcd}[column sep=tiny]
				{\displaystyle \coprod \{\varepsilon\}(\partial\Delta^{n+1,k})_x} & {\displaystyle \coprod \interval(\partial\Delta^{n+1,k})_x} & {\interval(K \cup \text{Sk}_{n}(L))} \\
				\\
				{\displaystyle \coprod \{\varepsilon\}(\Delta^{n+1,k})_x} & {\displaystyle \coprod \interval(\partial\Delta^{n+1,k})_x \cup \{\varepsilon\}(\Delta^{n+1,k})_x} & {\interval(K \cup \text{Sk}_{n}(L)) \cup \{\varepsilon\}\left(K \cup \text{Sk}_{n+1}(L)\right) }
				\arrow[hook, from=1-1, to=3-1]
				\arrow[from=1-3, to=3-3]
				\arrow[hook, from=1-1, to=1-2]
				\arrow[from=3-1, to=3-2]
				\arrow[from=1-2, to=3-2]
				\arrow["\lrcorner"{anchor=center, pos=0.125, rotate=180}, draw=none, from=3-2, to=1-1]
				\arrow[from=1-2, to=1-3]
				\arrow[from=3-2, to=3-3]
		\end{tikzcd}\end{equation}
		where the left most square is canonically a pushout square. Thus, by \ref{Equation:step3 double pushout 1} and \ref{Equation:Step3 double pushout second diagram} we conclude that the top square in \ref{Equation:Double square pushout Step3} is indeed a pushout diagram. Moreover, the bottom square in \ref{Equation:Double square pushout Step3} is then also a pushout square as the outer and top squares are.
		
		Finally, notice that the lower right vertical arrow in \ref{Equation:Double square pushout Step3} is part of a canonical pushout square 
		\begin{equation}\label{Equation:pushout2 in step 3}
			\begin{tikzcd}[]
				{\interval\left(K \cup \text{Sk}_n(L)\right) \cup \{\varepsilon\}(K\cup \text{Sk}_{n+1}(L))} & {\interval\left(K \cup \text{Sk}_n(L)\right) \cup \{\varepsilon\}(L)} \\
				\\
				{\interval\left(K \cup \text{Sk}_{n+1}(L)\right)} & {\interval\left(K \cup \text{Sk}_{n+1}(L)\right) \cup \{\varepsilon\}(L)}
				\arrow[from=1-1, to=3-1]
				\arrow[from=1-1, to=1-2]
				\arrow[from=1-2, to=3-2,"\varphi_n"]
				\arrow[from=3-1, to=3-2]
				\arrow["\lrcorner"{anchor=center, pos=0.125, rotate=180}, draw=none, from=3-2, to=1-1]
		\end{tikzcd}\end{equation}
		Combining the diagram \ref{Equation:pushout2 in step 3} and the fact that its bottom square is a pushout diagram we get that the following diagram is also a pushout: 
		\[\begin{tikzcd}
			{{\displaystyle \coprod_{ \Sigma^{n+1}} \interval(\partial\Delta^{n+1,k})_x \cup \{\varepsilon \}(\Delta^{n+1,k})_x}} & {\interval\left(K \cup \text{Sk}_n(L)\right) \cup \{\varepsilon\}(L)} \\
			\\
			{{\displaystyle \coprod_{ \Sigma^{n+1}}\interval(\Delta^{n+1,k})_x}} & {\interval\left(K \cup \text{Sk}_{n+1}(L)\right) \cup \{\varepsilon\}(L)}
			\arrow["{\varphi_n}", from=1-2, to=3-2]
			\arrow[from=3-1, to=3-2]
			\arrow[from=1-1, to=1-2]
			\arrow[from=1-1, to=3-1]
			\arrow["\lrcorner"{anchor=center, pos=0.125, rotate=180}, draw=none, from=3-2, to=1-1]
		\end{tikzcd}\]
		In this last diagram the left vertical arrow belongs to the class $M_{\catcal{B}}$. Again by saturation we conclude that $\varphi_n$ also lies in $M_{\varphi}$ as this last one is closed under pushouts. As a consequence $M_{\catcal{C}} \subseteq M_{\catcal{B}}$. The fact that $M_{\catcal{B}}\subseteq M_{\catcal{C}}$ follows trivially from their definitions. 
	\end{enumerate}

	We conclude that $ M_{\catcal{A}} = M_{\catcal{B}} = M_{\catcal{C}}$.
\end{proof}


\begin{proposition}\label{Proposition: proof of An3 using lemma}
	Let $\catcal{D}$ be the class of morphisms $X \lto Y$ in $\ssetisov$ such that $\interval X \cup \partial \interval(Y) \lto \interval Y$ is an isovariant anodyne extension. Then $\catcal{D}$ is closed under retracts, pushouts and transfinite compositions. Furthermore, $\catcal{D}$ contains the class $\catcal{B}$ from \Cref{lemma: saturation of admissible horn inclusions is the same as saturation of cisinski's class}.
\end{proposition}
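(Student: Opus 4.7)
The plan has two components: a formal verification of the closure properties using that the operation $f \mapsto (\interval X \cup \partial \interval Y \to \interval Y)$ commutes with colimits in $f$, followed by a concrete filtration argument establishing $\catcal{B} \subseteq \catcal{D}$.

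For the closure properties, I would invoke \Cref{prop: I preserves colimits and monos} to see that $\interval$ preserves all small colimits and monomorphisms, and note that the functor $\partial \interval(-) = \{0\}(-) \sqcup \{1\}(-)$ trivially does too. Consequently, retract, pushout and transfinite-composition diagrams in the arrow category transfer through the pushout-product construction: a retract of $f$ by some $g \in \catcal{D}$ produces the associated map for $f$ as a retract of the anodyne map for $g$; a pushout of $f: X \to Y$ along $X \to X'$ yields the map for $f'$ as a pushout of the map for $f$; and a transfinite composite of maps in $\catcal{D}$ transfers to a transfinite composite of the corresponding associated maps. Since isovariant anodyne extensions form a saturated class by \Cref{Definition:isovariant anodyne extension}, each resulting map lies in that class, whence $\catcal{D}$ is stable under the three operations.

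For the containment $\catcal{B} \subseteq \catcal{D}$, I would fix a generator $f: A \hookrightarrow B$ with $A = \interval \partial \Delta^{n,k} \cup \{\varepsilon\}(\Delta^{n,k})$ and $B = \interval \Delta^{n,k}$, and verify that the inclusion $\interval A \cup \partial \interval B \hookrightarrow \interval B$ is anodyne. Following the blueprint of Step~1 in the proof of \Cref{lemma: saturation of admissible horn inclusions is the same as saturation of cisinski's class}, the plan is to enumerate the non-degenerate simplices of $\interval B$ missing from $\interval A \cup \partial \interval B$---they correspond to paths in the double thickening $\thk(\thk([n]_k))$ via an iteration of \Cref{lemma: Non-degenerate simplices of R_n,k}---and to build the inclusion as a finite transfinite composition of pushouts, each along either an admissible horn inclusion $\Lambda^{N,K}_l \hookrightarrow \Delta^{N,K}$ or an inclusion already in $\catcal{B}$ (the latter are anodyne by \Cref{lemma: saturation of admissible horn inclusions is the same as saturation of cisinski's class}).

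The main obstacle will be the combinatorial bookkeeping: fixing a total order on these missing simplices compatible with the attaching maps and, most delicately, verifying via \Cref{Proposition:admissible horns} that each horn appearing in the filtration avoids the two non-admissible patterns $(k,l) = (1,0)$ and $(k,l) = (n,n)$. The role of $\{\varepsilon\}(\Delta^{n,k}) \subseteq A$ should be precisely to preempt this: the already-filled endpoint cylinder will supply the missing face whenever the naive filtration would produce a non-admissible horn, reducing that cell attachment to a pushout along a morphism already in $\catcal{B}$.
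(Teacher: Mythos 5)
Your treatment of the closure properties is essentially the same as the paper's: both rest on the fact that the assignment $f\mapsto (\interval X\cup\partial\interval(Y)\to \interval Y)$ commutes with colimits in $f$ because $\interval$ and $\partial\interval$ do; the paper simply packages this by citing Corollaire~1.1.8 of \cite{Cisinski2016LPCMDTH} together with saturation of the anodyne class, which is a cleaner way to dispose of retracts, pushouts, and transfinite compositions in one stroke.

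For the containment $\catcal{B}\subseteq\catcal{D}$, however, you have proposed a genuinely different and considerably harder route, and you have not actually carried it out. Enumerating the non-degenerate simplices of $\interval(\interval\Delta^{n,k}) = \Hom_{G\isovposets}(-,\thk(\thk[n]_k))$ and building a filtration by admissible horn attachments is not a straightforward replay of Step~1 of \Cref{lemma: saturation of admissible horn inclusions is the same as saturation of cisinski's class}: the maximal chains in the double thickening $\thk(\thk[n]_k)$ are parametrized by pairs rather than a single index, the bookkeeping is quadratic rather than linear, and verifying admissibility at each stage is delicate --- you explicitly flag this as the ``main obstacle'' but leave it open, so as written this is a gap, not a proof. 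The paper sidesteps all of it with a formal argument: since $\interval$ commutes with colimits and with itself, one has $\interval(\partial\interval X)\cong\partial\interval(\interval X)$ and $\interval(\{\varepsilon\}X)\cong\{\varepsilon\}(\interval X)$, so the map $\interval A\cup\partial\interval(B)\to\interval B$ (with $A=\interval\partialbf\Delta^{n,k}\cup\{\varepsilon\}\Delta^{n,k}$, $B=\interval\Delta^{n,k}$) rewrites as $\interval K\cup\{\varepsilon\}(L)\to\interval L$ where $K=\interval(\partialbf\Delta^{n,k})\cup\partial\interval(\Delta^{n,k})\hookrightarrow L=\interval\Delta^{n,k}$. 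Since $\interval\Delta^{n,k}$ is normal, this inclusion $K\hookrightarrow L$ is a normal monomorphism by \enumref{Paragraph:Properties_normal_monomorphisms}{3}, hence the rewritten map lies in the class $\catcal{C}$, and $M_{\catcal{C}}=M_{\catcal{A}}$ by the lemma already proved. You already have all the ingredients (you even cite the lemma asserting $M_{\catcal{B}}=M_{\catcal{C}}$); reaching for $\catcal{C}$ instead of re-filtering through $\catcal{A}$ is the move that makes the proof short and avoids the combinatorics you correctly identified as the sticking point.
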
	

\begin{proof}
	Recall that we have a canonical morphism of functors $\partial \interval \lto \interval$ given by the collection of morphisms $X \coprod X \overset{(\partial^0_X,\partial^1_X)}{\lto}\interval X$ for each $X \in \ssetisov$. The class $\catcal{D}$ consist then of all morphism $X \lto Y$ such that the dotted morphism deduced from the commutative square: 
	\[\begin{tikzcd}
		{\partial \interval(X)} && {\partial \interval(Y)} \\
		& {\displaystyle \interval X\coprod_{\partial \interval(X)}\partial \interval(Y)} \\
		{\interval X} && {\interval Y}
		\arrow["{(\partial^0_X,\partial^1_X)}"', from=1-1, to=3-1]
		\arrow[from=3-1, to=3-3]
		\arrow["{(\partial^0_Y,\partial^1_Y)}", from=1-3, to=3-3]
		\arrow[from=1-1, to=1-3]
		\arrow[dashed, from=2-2, to=3-3]
		\arrow[from=1-3, to=2-2]
		\arrow[from=3-1, to=2-2]
		\arrow["\lrcorner"{anchor=center, pos=0.125, rotate=180}, draw=none, from=2-2, to=1-1]
	\end{tikzcd}\]
	is an isovariant anodyne extension. The fact that $\catcal{D}$ is saturated then follows directly from Corollary 1.1.8 in \cite{Cisinski2016LPCMDTH} and the fact that the class of isovariant anodyne extensions is saturated. Now let $\partial \Delta^{n,k}\lto \Delta^{n,k}$ be a boundary inclusion of isovariant simplicial sets. Notice that $\interval \Delta^{n,k}$ is a normal isovariant simplicial set as $\Delta^{n,k}$ is. Thus the canonical inclusion 
	\[\interval (\partialbf{\Delta^{n,k})} \cup \partial\interval (\Delta^{n,k}) \linj \interval(\Delta^{n,k})\]
	is also normal. To show that $\catcal{B} \subseteq \mathcal{D}$ we have to show that 
	\begin{equation}\label{Equation: class aux D to show An3}
		\varphi: \interval\left(\interval(\partialbf\Delta^{n,k})\cup \{\varepsilon\}(\Delta^{n,k})\right)\cup \partial \interval(\interval \Delta^{n,k}) \lto \interval(\interval \Delta^{n,k}) 
	\end{equation}
	is an isovariant anodyne extension. To see this, first notice that we have canonical isomorphisms 
	\[\interval(\partial \interval(X)) \simeq \partial \interval (\interval X) \simeq (\interval X)\coprod(\interval X).\]
	Therefore, using the fact that $\interval$ commutes with colimits and also that colimits commute with colimits we have that the left hand side in \ref{Equation: class aux D to show An3} writes as: 
	
	\begin{align*}
		\interval\left(\interval(\partialbf\Delta^{n,k})\cup \{\varepsilon\}(\Delta^{n,k})\right) \cup \partial \interval(\interval \Delta^{n,k}) &\simeq  \interval\left(\interval(\partialbf\Delta^{n,k})\cup \{\varepsilon\}(\Delta^{n,k})\right) \cup \interval(\partial \interval (\Delta^{n,k})) \\ 
		& \simeq \interval\left( \interval(\partialbf \Delta^{n,k}) \cup \{\varepsilon\} (\Delta^{n,k}) \cup \partial \interval(\Delta^{n,k})\right) \\ 
		& \simeq  \interval\left( \interval(\partialbf \Delta^{n,k})  \cup \partial \interval(\Delta^{n,k}) \cup \{\varepsilon\}(\Delta^{n,k}) \right) \\
		& \simeq \interval(\interval(\partialbf \Delta^{n,k}) \cup \partial \interval(\Delta^{n,k})) \cup \{\varepsilon\}(\interval \Delta^{n,k})
	\end{align*}  
	Where in this last line we have also used the fact that $\interval$ commutes with itself in such a way that the morphism $\varphi$ from \ref{Equation: class aux D to show An3} writes as: 
	\[ \varphi: \interval(\interval(\partialbf \Delta^{n,k}) \cup \partial \interval(\Delta^{n,k})) \cup \{\varepsilon\}(\interval \Delta^{n,k}) \lto \interval(\interval \Delta^{n,k}) \]
	and since $\interval (\partialbf{\Delta^{n,k})} \cup \partial\interval (\Delta^{n,k}) \linj \interval(\Delta^{n,k})$ is normal, by definition of the class $\catcal{C}$, $\varphi$ belongs to $\catcal{C} \subseteq M_{\catcal{C}}$. Thus, by Lemma \ref{lemma: saturation of admissible horn inclusions is the same as saturation of cisinski's class} the map $\varphi$ also belongs to the class $M_{\catcal{A}}$ which is precisely the class of isovariant anodyne extensions. In other words, $\varphi$ is an isovariant anodyne extension. 
\end{proof}

\begin{corollary}\label{Corollary:An3}
	The class $\mathcal{D}$ contains the class of isovariant anodyne extensions. In particular for any $X\lto Y$ isovariant anodyne the canonical morphism $\interval X \cup \partial \interval(Y) \lto \interval Y$ is isovariant anodyne.
\end{corollary}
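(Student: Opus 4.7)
The plan is to deduce Corollary~\ref{Corollary:An3} directly as a formal consequence of Proposition~\ref{Proposition: proof of An3 using lemma} and Lemma~\ref{lemma: saturation of admissible horn inclusions is the same as saturation of cisinski's class}, without any further geometric computation. All the hard work has already been done: one side establishes that $\catcal{D}$ is saturated and contains $\catcal{B}$, while the other identifies the saturation of $\catcal{B}$ with the class of isovariant anodyne extensions.

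More precisely, I would argue as follows. By Proposition~\ref{Proposition: proof of An3 using lemma}, the class $\catcal{D}$ is closed under retracts, pushouts and transfinite compositions, and contains $\catcal{B}$. Hence $\catcal{D}$ contains the smallest saturated class of morphisms containing $\catcal{B}$, that is, $M_{\catcal{B}} \subseteq \catcal{D}$. By Lemma~\ref{lemma: saturation of admissible horn inclusions is the same as saturation of cisinski's class}, we have $M_{\catcal{A}} = M_{\catcal{B}}$, and by Definition~\ref{Definition:isovariant anodyne extension} the class $M_{\catcal{A}}$ is exactly the class of isovariant anodyne extensions. Combining these two inclusions yields the required containment of the isovariant anodyne extensions in $\catcal{D}$.

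The second sentence of the corollary is then merely an unpacking of the definition of $\catcal{D}$: if $X \lto Y$ is an isovariant anodyne extension, then $(X\lto Y)\in\catcal{D}$, which by definition of $\catcal{D}$ means precisely that the induced morphism $\interval X \cup \partial \interval(Y) \lto \interval Y$ is an isovariant anodyne extension.

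There is no genuine obstacle here; the only thing to be careful about is to make sure that no circularity is introduced between Proposition~\ref{Proposition: proof of An3 using lemma}, Lemma~\ref{lemma: saturation of admissible horn inclusions is the same as saturation of cisinski's class} and this corollary. This is indeed the case: Proposition~\ref{Proposition: proof of An3 using lemma} used only the saturation of the class of isovariant anodyne extensions together with the explicit description of $\catcal{B}$ via normal inclusions, and Lemma~\ref{lemma: saturation of admissible horn inclusions is the same as saturation of cisinski's class} used only an explicit pushout/retract analysis of admissible horns. Thus the corollary is a purely formal saturation argument, and the proof should occupy no more than a few lines.
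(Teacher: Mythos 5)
Your proposal is correct and follows essentially the same argument as the paper's own one-line proof: $\catcal{D}$ is saturated and contains $\catcal{B}$, so by minimality $M_{\catcal{A}} = M_{\catcal{B}} \subseteq \catcal{D}$. Your added remark that the second sentence just unpacks the definition of $\catcal{D}$, and your check that no circularity is introduced, are both sound observations but do not change the substance of the argument.
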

\begin{proof}
	This follow directly from Proposition \ref{Proposition: proof of An3 using lemma} namely, the class $\catcal{D}$ is saturated and contains the class $\catcal{B}$ so that by minimality $M_{\catcal{A}}=M_\catcal{B} \subseteq \catcal{D}$.
\end{proof}

Let $B$ be a normal isovariant simplicial set and denote by $\text{An}_B$ the class of morphisms in $\ssetisov \downarrow B$ whose image under the forgetful functor $U_B$ is an isovariant anodyne extension. We will usually refer to this class $\An_B$ as the class of \emph{fiberwise isovariant anodyne extensions}. Using the results we have shown in last few sections and the following result from \cite{Cisinski2016LPCMDTH} we can show that $\An_B$ is a class of $\interval_B$-anodyne extensions. 

\begin{lemma}[{\cite[\nopp 1.3.52]{Cisinski2016LPCMDTH}}]\label{Lemma:Cisinski 1.3.52}
	Let $\catname{A}$ be a small category, $\catcal{F}$ be a class of presheaf morphisms in $\Psh(\catname{A})$ and let $X$ any presheaf over $\catname{A}$. If $\catcal{F}\downarrow X$ denotes the class of morphisms in $\Psh(\catname{A})\downarrow X$ of the form 
	\[\begin{tikzcd}
		K && L \\
		& X
		\arrow["f", from=1-1, to=1-3]
		\arrow[from=1-1, to=2-2]
		\arrow[from=1-3, to=2-2]
	\end{tikzcd}\]
	where $f \in \catcal{F}$. Then $L(R(\catcal{F}\downarrow X)) = L(R(\catcal{F}))\downarrow X$. 
\end{lemma}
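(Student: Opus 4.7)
\begin{para}\normalfont
The plan is to prove the two inclusions separately, using the key observation that the forgetful functor $U_X : \Psh(\catname{A})\downarrow X \lto \Psh(\catname{A})$ preserves and reflects colimits and monomorphisms; in particular, a morphism in the slice is a pushout (respectively a transfinite composition, a retract) of a family of slice morphisms if and only if its image under $U_X$ is such in $\Psh(\catname{A})$. Combined with the fact that for any class $\catcal{G}$ of presheaf morphisms, $L(R(\catcal{G}))$ is the smallest class containing $\catcal{G}$ and closed under pushouts, transfinite compositions and retracts, this reduces everything to a saturation-chasing argument.
\end{para}

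\begin{para}\normalfont
For the inclusion $L(R(\catcal{F}\downarrow X)) \subseteq L(R(\catcal{F}))\downarrow X$, consider the class
\[ \catcal{T} \coloneqq \left\{ f \in \ssetisov\downarrow X : U_X(f) \in L(R(\catcal{F}))\right\} = L(R(\catcal{F}))\downarrow X.\]
Because $U_X$ creates the relevant colimits and $L(R(\catcal{F}))$ is saturated in $\Psh(\catname{A})$, the class $\catcal{T}$ is saturated in $\Psh(\catname{A})\downarrow X$. Moreover it contains $\catcal{F}\downarrow X$ tautologically. By minimality of the saturated class $L(R(\catcal{F}\downarrow X))$ generated by $\catcal{F}\downarrow X$, one obtains $L(R(\catcal{F}\downarrow X))\subseteq \catcal{T}$, which is the desired inclusion.
\end{para}

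\begin{para}\normalfont
The reverse inclusion is slightly more delicate since we must produce maps to $X$ along the way. Introduce the auxiliary class
\[ \mathcal{S} \coloneqq \left\{ f:K\lto L \text{ in } \Psh(\catname{A}) : \text{for every } L\lto X, \text{ the induced } f/X \in L(R(\catcal{F}\downarrow X))\right\}.\]
Trivially $\catcal{F}\subseteq \mathcal{S}$, since any $f\in\catcal{F}$ paired with a map $L\lto X$ is a morphism in $\catcal{F}\downarrow X \subseteq L(R(\catcal{F}\downarrow X))$. I would then verify that $\mathcal{S}$ is saturated: given a pushout square in $\Psh(\catname{A})$ with left-hand edge $A\lto B$ in $\mathcal{S}$ and pushout $C\lto D$, any map $D\lto X$ induces maps $A\lto X$, $B\lto X$, $C\lto X$ turning the square into a pushout in the slice $\Psh(\catname{A})\downarrow X$ (because $U_X$ creates pushouts); the edge $A\lto B$ over $X$ is in $L(R(\catcal{F}\downarrow X))$ by hypothesis on $\mathcal{S}$, and hence so is its pushout $C\lto D$ over $X$. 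Transfinite compositions and retracts are handled by the same mechanism, equipping each intermediate object with the map to $X$ induced by the colimit/retract maps. Thus $\mathcal{S}$ is saturated and contains $\catcal{F}$, which by minimality gives $L(R(\catcal{F}))\subseteq \mathcal{S}$; this is precisely the inclusion $L(R(\catcal{F}))\downarrow X \subseteq L(R(\catcal{F}\downarrow X))$.
\end{para}

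\begin{para}\normalfont
No step poses a serious obstacle once the preservation/reflection properties of $U_X$ are noted; the only point that requires mild care is Paragraph~3, where one must make sure that for each generating operation (pushout, transfinite composition, retract) the required maps to $X$ on the intermediate objects are canonically induced by the universal property of the colimit or the retract diagram, so that the whole construction lifts from $\Psh(\catname{A})$ to $\Psh(\catname{A})\downarrow X$. Combining the two inclusions yields the equality $L(R(\catcal{F}\downarrow X)) = L(R(\catcal{F}))\downarrow X$.
\end{para}
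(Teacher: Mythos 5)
The paper gives no proof of this lemma — it cites Cisinski verbatim and moves on — so I can only assess your proposal on its own terms. Both of your inclusions rest on the claim that, for any class $\catcal{G}$ of presheaf morphisms, $L(R(\catcal{G}))$ equals the smallest class containing $\catcal{G}$ closed under pushouts, transfinite compositions and retracts. That claim is the retract corollary of the small object argument and requires $\catcal{G}$ to be a \emph{set}: the containment of the saturation inside $L(R(\catcal{G}))$ is formal, but the reverse (every element of $L(R(\catcal{G}))$ is a retract of a relative $\catcal{G}$-cell complex) needs the factorizations that the small object argument produces. The lemma as stated allows $\catcal{F}$ — hence $\catcal{F}\downarrow X$ — to be a proper class, so both of your ``by minimality'' steps are unjustified at that level of generality. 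Where the paper actually invokes the lemma, $\catcal{F}$ is the set $\catcal{A}$ of admissible horn inclusions, so your argument happens to apply in situ, but as a proof of the stated lemma it has a gap, and it also imports far more machinery than the statement needs.

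A direct lifting argument closes the gap and is shorter. First, $R(\catcal{F}\downarrow X)$ consists precisely of the slice morphisms $q$ with $U_X(q)\in R(\catcal{F})$: a solution to a slice lifting problem that is produced downstairs is automatically over $X$ (the structural map of the lift's target factors through $q$), and conversely a downstairs lifting problem of $f\in\catcal{F}$ against $U_X(q)$ becomes a slice lifting problem against $q$ once one equips the domain and codomain of $f$ with the maps to $X$ obtained by composing through the right-hand leg of the square. Second, $R(\catcal{F})$, being a right lifting class, is stable under base change, so for any $p\in R(\catcal{F})$ the map $p\times X$ lies in $R(\catcal{F})$ over $X$, hence in $R(\catcal{F}\downarrow X)$. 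The inclusion $L(R(\catcal{F}))\downarrow X\subseteq L(R(\catcal{F}\downarrow X))$ is then immediate from the first observation. For the other, take $f$ with LLP against $R(\catcal{F}\downarrow X)$ and a downstairs lifting problem of $U_X(f)$ against some $p\in R(\catcal{F})$; upgrade it to a slice lifting problem against $p\times X$ by pairing the two horizontal maps with the structural maps to $X$, lift there, and project back to the first factor. This involves no saturation, no small object argument, and no cardinality hypothesis on $\catcal{F}$.
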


\begin{proposition}\label{Proposition:Admissible horn inclusions is a class of anodyne extensions}
	The class of morphisms $\text{An}_B$ is a class of $\ \interval_B$-anodyne extensions for the category $\ssetisov\downarrow B$. That is, $\text{An}_B$ satisfies the axioms of \cite[Définition 1.3.10]{Cisinski2016LPCMDTH}
\end{proposition}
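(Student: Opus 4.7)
The plan is to verify the three axioms of \cite[Définition 1.3.10]{Cisinski2016LPCMDTH} by reducing each one, via the forgetful functor $U_B \colon \ssetisov \downarrow B \to \ssetisov$, to a statement already established in $\ssetisov$. Recall that $U_B$ is faithful, preserves and reflects monomorphisms, small colimits and pullbacks, and by construction intertwines the cylinder structures: $U_B \circ \interval_B = \interval \circ U_B$ together with compatible $\partial_\varepsilon$, $\rho$. In particular, a morphism in $\ssetisov \downarrow B$ belongs to $\An_B$ precisely when its image under $U_B$ is an isovariant anodyne extension, which is the definition.

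For the generation-by-a-small-set axiom, I would let $\catcal{A} \downarrow B$ denote the set of arrows in $\ssetisov \downarrow B$ whose underlying morphism is an admissible horn inclusion $\Lambda^{n,k}_l \to \Delta^{n,k}$ equipped with some chosen structure map to $B$. Applying \Cref{Lemma:Cisinski 1.3.52} to $\catcal{F} = \catcal{A}$ and $X = B$ yields $L(R(\catcal{A} \downarrow B)) = L(R(\catcal{A})) \downarrow B$; the right-hand side is exactly $\An_B$, so $\An_B$ is the saturation of a small set.

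For the axiom asserting that, for every monomorphism $K \hookrightarrow L$ in $\ssetisov \downarrow B$ and every $\varepsilon \in \{0,1\}$, the inclusion $\interval_B K \cup \{\varepsilon\}(L) \to \interval_B L$ lies in $\An_B$, I would apply $U_B$ (which preserves the pushout defining the domain) to reduce to the analogous statement for $\interval K \cup \{\varepsilon\}(L) \to \interval L$ in $\ssetisov$. This is where the hypothesis that $B$ is normal intervenes, and it is the only nontrivial point: the existence of a structure map $L \to B$ together with \enumref{Paragraph:Properties_normal_monomorphisms}{2} forces $L$ to be normal, and then \enumref{Paragraph:Properties_normal_monomorphisms}{3} promotes the monomorphism $K \hookrightarrow L$ to a normal monomorphism. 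The map therefore belongs to the class $\catcal{C}$ of \Cref{lemma: saturation of admissible horn inclusions is the same as saturation of cisinski's class}, whose saturation coincides with the class of isovariant anodyne extensions.

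For the remaining axiom, that $\interval_B X \cup \partial \interval_B(Y) \to \interval_B Y$ lies in $\An_B$ whenever $X \to Y$ does, I would again apply $U_B$, noting that the functor preserves both $\interval_B$ and $\partial \interval_B$ (since the coproduct $X \coprod X$ and the natural inclusions $\partial_\varepsilon$ are preserved) as well as the pushout forming the domain. The statement then becomes \Cref{Corollary:An3}. The only nontrivial ingredient in the whole proof is the promotion of monomorphisms to normal monomorphisms in the second axiom; everything else is a formal transfer through $U_B$ of the previously established results in $\ssetisov$.
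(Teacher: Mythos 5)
Your proposal is correct and follows essentially the same route as the paper: both verify An0 via \Cref{Lemma:Cisinski 1.3.52} together with \Cref{lemma: saturation of admissible horn inclusions is the same as saturation of cisinski's class}, both verify An1 by using normality of $B$ together with \enumref{Paragraph:Properties_normal_monomorphisms}{2} and \enumref{Paragraph:Properties_normal_monomorphisms}{3} to promote monomorphisms over $B$ to normal monomorphisms, and both verify An2 by reducing to \Cref{Corollary:An3}. The extra remarks you add about $U_B$ preserving monomorphisms, colimits, pullbacks and the cylinder are correct and only make explicit what the paper's proof uses implicitly.
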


\begin{proof}
	The first axiom An0 follows directly from \Cref{{Lemma:Cisinski 1.3.52},{lemma: saturation of admissible horn inclusions is the same as saturation of cisinski's class}}: 
	\[ L(R(\catcal{A}\downarrow B)) = L(R(\catcal{A}))\downarrow B = \An_B.\]
	The axiom An1 follows again from \Cref{lemma: saturation of admissible horn inclusions is the same as saturation of cisinski's class} and the properties \enumref{Paragraph:Properties_normal_monomorphisms}{2} and \enumref{Paragraph:Properties_normal_monomorphisms}{3} of normal monomorphisms for if $(X \rightarrow B) \linj (Y \rightarrow B)$ is a monomorphism in $\ssetisov \downarrow B$, then $X$ and $Y$ are also normal as $B$ is. Finally the last axiom An2 correspond precisely to \Cref{Corollary:An3}.
	
\end{proof}

\subsection{The Model Structure}

In this section we collect the analogue definitions as in \cite[Définition 1.3.21]{Cisinski2016LPCMDTH} in the context of isovariant simplicial sets over a fixed normal object $B$. Next we shall use all the results we have shown in the last sections to justify the existance of a cofibrantly generated model structure in $\ssetisov \downarrow B$ by using \cite[Théorème 1.3.22]{Cisinski2016LPCMDTH}. 

\begin{definition}\label{Definition: isovariant weak equivalences, cofibs, fibs}
	Let $B$ be a normal isovariant simplicial set and recall that an isovariant simplicial set map $f:X \lto Y$ over $B$ is called:
	\begin{enumerate}[label=\textbf{\arabic*.}, ref=\arabic*.]
		\item Isovariant cofibration if $f$ is a monomorphism (equivalently normal monomorphism). 
		\item Trivial fibration if $f$ has the right lifting property with respect to all cofibrations (normal monomorphisms). 
		\item Isovariant $B$-fibration if $f$ has the right lifting property with respect to the class of fiberwise isovariant anodyne extensions (the class $\An_B$). 
		
		Moreover an isovariant simplicial set $X$ over $B$ will be called \emph{fibrant} if the structural map $X \lto B$ is an isovariant fibration (\Cref{Definition:isovariant anodyne extension}). 
		\item Weak equivalence if $f$ is a fiberwise homotopy equivalence, that is, for any isovariant fibration $W \lto B$ (equivalently $W\lto B$ is fibrant in $\ssetisov \downarrow B$) the induced morphism 
		\[ [Y,W]_B \lto [X,W]_B\]
		is a bijection.
	\end{enumerate}
\end{definition}

\begin{theorem}\label{Theorem: Model structure for ssisov}
	For any $B \in \ssetisov$ normal. The over category of isovariant simplicial sets $\ssetisov \downarrow B$ has a left proper cofibrantly generated model category structure where the cofibrations are the isovariant cofibrations defined above, the weak equivalences are the fiberwise homotopy equivalences and the fibrant objects are the isovariant simplicial sets such that the structural map is an isovariant fibration (see \Cref{Definition:isovariant anodyne extension}). 
\end{theorem}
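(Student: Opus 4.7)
The plan is to apply Cisinski's general construction of model structures on presheaf categories, namely \cite[Théorème 1.3.22]{Cisinski2016LPCMDTH}, to the homotopical structure we have just assembled. Recall that this theorem takes as input a category of presheaves equipped with a homotopical structure in the sense of \cite[Définition 1.3.6]{Cisinski2016LPCMDTH}, i.e.\ the data of an exact cylinder together with a class of anodyne extensions, and produces a cofibrantly generated model category structure whose cofibrations are generated by the boundary inclusions of the underlying skeletal presheaf category.

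First I would remark that $\ssetisov \downarrow B$ is itself a presheaf category, namely $\Psh(G\Deltacat/B)$, and that the slice category $G\Deltacat/B$ is skeletal since $B$ is normal (the image of an epimorphism in this slice under the forgetful functor is an epimorphism in $G\Deltacat$, which admits a section by Proposition \ref{Prop: The category Gdelta is einlenber-Zilber}). The homotopical structure is then provided by the pair $(\interval_B, \An_B)$: the functor $\interval_B$ is an exact cylinder on $\ssetisov \downarrow B$ by \Cref{Proposition:Exact cylinder of over category}, and $\An_B$ is a class of $\interval_B$-anodyne extensions by \Cref{Proposition:Admissible horn inclusions is a class of anodyne extensions}. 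Thus all of the hypotheses of \cite[Théorème 1.3.22]{Cisinski2016LPCMDTH} are satisfied, producing a cofibrantly generated model structure.

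Next I would identify the classes of the resulting structure with those in the statement. Cisinski's theorem characterizes the cofibrations as the normal monomorphisms of $\Psh(G\Deltacat/B)$; because $B$ is normal, property \enumref{Paragraph:Properties_normal_monomorphisms}{3} of Section 3.3 gives that every monomorphism over $B$ is automatically normal, so the cofibrations coincide with the isovariant cofibrations of \Cref{Definition: isovariant weak equivalences, cofibs, fibs}. The fibrant objects are by construction those $X \to B$ having the right lifting property against $\An_B$, which matches \Cref{Definition: isovariant weak equivalences, cofibs, fibs}. Finally, the weak equivalences produced by Cisinski's theorem are precisely the morphisms inducing bijections $[Y,W]_B \to [X,W]_B$ for every fibrant $W$ over $B$, which is our definition of fiberwise homotopy equivalence.

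Left properness will follow from the general fact that a Cisinski model structure on a presheaf category is left proper whenever every object is cofibrant: since $B$ is normal, every object $X \to B$ in $\ssetisov \downarrow B$ has $X$ normal by \enumref{Paragraph:Properties_normal_monomorphisms}{2}, so the map $\emptyset \to X$ over $B$ is a normal monomorphism, i.e.\ cofibrant. Cofibrant generation is automatic from Cisinski's theorem, with generating cofibrations the pulled back boundary inclusions $(\partialbf \Delta^{n,k} \to \Delta^{n,k})$ over $B$. The main subtlety I expect is the clean identification between Cisinski's abstract weak equivalences and our concrete fiberwise homotopy equivalences, but this is precisely the content of \cite[Théorème 1.3.22]{Cisinski2016LPCMDTH} applied to the homotopical structure $(\interval_B, \An_B)$, so no additional argument beyond citing it should be required.
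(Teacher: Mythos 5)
Your proposal is correct and follows essentially the same route as the paper: the proof consists of applying \cite[Théorème 1.3.22]{Cisinski2016LPCMDTH} to $\ssetisov \downarrow B$ equipped with the homotopical data $(\interval_B, \An_B)$ supplied by \Cref{Proposition:Exact cylinder of over category} and \Cref{Proposition:Admissible horn inclusions is a class of anodyne extensions}; the paper's proof is in fact a one-line citation of exactly this.

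Two small precisions worth noting. First, \cite[Théorème 1.3.22]{Cisinski2016LPCMDTH} does not require the index category to be skeletal, nor does it phrase the cofibrations as \emph{normal} monomorphisms: it produces a model structure whose cofibrations are \emph{all} monomorphisms of the presheaf category. So the remark that $G\Deltacat/B$ is skeletal, while plausible, is not needed here, and the identification you should make is: cofibrations of the Cisinski model structure $=$ monomorphisms of $\ssetisov\downarrow B$ $=$ normal monomorphisms over $B$, where the second equality holds because $B$ is normal and hence, by \enumref{Paragraph:Properties_normal_monomorphisms}{2} and \enumref{Paragraph:Properties_normal_monomorphisms}{3}, every monomorphism over $B$ is automatically normal. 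This is exactly the content of the parenthetical ``(equivalently normal monomorphism)'' in \Cref{Definition: isovariant weak equivalences, cofibs, fibs}. Second, left properness is already part of the conclusion of Cisinski's theorem (and follows from the fact that every object is cofibrant, since $\emptyset \to X$ is always a monomorphism in a presheaf topos), so no separate argument is needed; your remark reproduces this but routes it unnecessarily through the normality of $B$.
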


\begin{proof}
	This follows from \cite[Théorème 1.3.22]{Cisinski2016LPCMDTH} applied to the category of isovariant simplicial sets $\ssetisov \downarrow B$ together with the homotopical data $(\interval_B, \An_B)$ constructed in \Cref{{Proposition:Exact cylinder of over category},{Proposition:Admissible horn inclusions is a class of anodyne extensions}}.
\end{proof}

We would like to transfer this last model structure to the category of isovariant simplicial sets by using the transfer Theorem due to Sjoerd Crans in \cite{CRANS199535}. We will use a mild variation from \cite[\S 9.1 ]{dwyer1997model}. 

\begin{para}\label{Paragraph: adjuctions and normalizations}\normalfont
	Let $B$ be a normal isovariant simplicial set and denote by $U:B \lto \ast$ the canonical isovariant simplicial set from $B$ to the terminal object $\ast$ in $\ssetisov$. We have a triple adjunction \cite[IV \S 7 Theorem 2]{MacLane1994}:	
	\begin{equation}\label{Eq:basechangeadjunction}
	\begin{tikzcd}
		{\ssetisov\downarrow B} && \ssetisov
		\arrow[""{name=0, anchor=center, inner sep=0}, "{U_{!}}", bend right={-22pt}, from=1-1, to=1-3]
		\arrow[""{name=1, anchor=center, inner sep=0}, "{U^{\ast}}"'{pos=0.3}, from=1-3, to=1-1]
		\arrow[""{name=2, anchor=center, inner sep=0}, "{U_{\ast}}"', bend right={22pt}, from=1-1, to=1-3]
		\arrow["\dashv"{anchor=center, rotate=-90}, draw=none, from=0, to=1]
		\arrow["\dashv"{anchor=center, rotate=-90}, draw=none, from=1, to=2]
	\end{tikzcd}
	\end{equation}
%
%
	where $U^{\ast}$ denotes the change-of-base functor. In this particular case it is simply given, on objects, by $X\longmapsto X \times B$. The structural map is the evident projection. The functor $U_{!}$ is given by composition with $U:B\lto \ast$. In this particular case $U_{!}$ corresponds to the usual forgetful functor. 
	
	Finally, notice that the category $\ssetisov$ has inner hom objects. These are given as in any category of presheaves (see \cite[I. \S 6 Proposition 1]{MacLane1994}) by: 
	
	\[
	\Map(X,Y):  \begin{array}{ll}
		G\Deltacat^{\op} &\longrightarrow \Set \\ {[n]_k} &\longmapsto \Hom_{\ssetisov}(X \times \Delta^{n,k},Y)
	\end{array}
	\]
	If $p_X: X \lto B$ is an object of $\ssetisov \downarrow B$ then the right adjoint $U_{\ast}$ is then given, on objects, by the pullback: 
	
	\[\begin{tikzcd}
		{U_{\ast}(X)} & {\Map(B,X)} \\
		\ast & {\Map(B,B)}
		\arrow[dashed, from=1-1, to=1-2]
		\arrow["{p_X \circ -}", from=1-2, to=2-2]
		\arrow[dashed, from=1-1, to=2-1]
		\arrow["{«\idd_B»}"', from=2-1, to=2-2]
		\arrow["\lrcorner"{anchor=center, pos=0.125}, draw=none, from=1-1, to=2-2]
	\end{tikzcd}\]
	The triple adjunction $U_{!}\dashv U^{\ast} \dashv U_{\ast}$ induces an adjunction
	\begin{equation}\label{Eq:adjunction fiberwise to itself}
	\adjto{U^{\ast} \circ U_{!}}{\ssetisov \downarrow B}{\ssetisov \downarrow B}{U^{\ast} \circ U_{\ast}}
	\end{equation}
\end{para}

	\begin{proposition}\label{Prop:fiberwise adjunction is quillen}
		The adjunction from \eqref{Eq:adjunction fiberwise to itself} is a Quillen adjunction with respect to the model structure given by \emph{\Cref{Theorem: Model structure for ssisov}}.
	\end{proposition}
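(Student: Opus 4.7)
The strategy is to verify that the left adjoint $U^{\ast}\circ U_{!}$ preserves cofibrations and trivial cofibrations for the model structure of \Cref{Theorem: Model structure for ssisov}. On objects, this functor sends $(X\to B)$ to $(X\times B\to B)$ with the second-factor projection as structure map, and sends $f\colon(X\to B)\to(Y\to B)$ to $f\times\idd_B$.

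Preservation of cofibrations is immediate, since cofibrations over $B$ are monomorphisms and the cartesian product in $\ssetisov$ preserves monomorphisms. For trivial cofibrations, I would reduce to generators. Both $U_{!}$ and $U^{\ast}$ are left adjoints (with $U^{\ast}$ being simultaneously right adjoint of $U_{!}$ and left adjoint of $U_{\ast}$), so the composite $U^{\ast}\circ U_{!}$ preserves small colimits and retracts. Combining \Cref{Proposition:Admissible horn inclusions is a class of anodyne extensions}, \Cref{lemma: saturation of admissible horn inclusions is the same as saturation of cisinski's class} and \Cref{Lemma:Cisinski 1.3.52}, the trivial cofibrations in $\ssetisov\downarrow B$ form the weakly saturated class generated by admissible horn inclusions of the form $(\Lambda^{n,k}_l\to B)\hookrightarrow(\Delta^{n,k}\to B)$. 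Since trivial cofibrations in any model category are closed under pushouts, transfinite compositions and retracts, it suffices to check that each such generator is sent to a trivial cofibration.

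The core step is therefore to prove that, for every admissible horn inclusion $\iota\colon\Lambda^{n,k}_l\hookrightarrow\Delta^{n,k}$ regarded as an object over $B$, the morphism $\iota\times\idd_B\colon\Lambda^{n,k}_l\times B\to\Delta^{n,k}\times B$ is a weak equivalence in $\ssetisov\downarrow B$; it is automatically a cofibration. I would upgrade the explicit isovariant elementary homotopy equivalence from the proof of \Cref{Lemma: characterization of isovariant horn inclusions} to a fiberwise one. Writing $\tilde{\varphi}_l\colon\Delta^{n,k}\to\Lambda^{n,k}_l$ for the factorization of $\varphi_l$ through the horn and $H_l\colon\interval\Delta^{n,k}\to\Delta^{n,k}$ for the homotopy of that proof, define
\[
H'(z) \;\coloneqq\; \bigl(H_l(\interval(\pr_1)(z)),\; \rho^B(\interval(\pr_B)(z))\bigr),\qquad z\in\interval(\Delta^{n,k}\times B),
\]
together with the analogous formula on $\interval(\Lambda^{n,k}_l\times B)$. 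The essential verification is that $H'$ is a morphism over $B$, which reduces to the naturality identity $\pr_B\circ\rho^{X\times B}=\rho^B\circ\interval(\pr_B)$; the endpoint conditions $H'\circ\partial_0^{X\times B}=\varphi_l\times\idd_B$ and $H'\circ\partial_1^{X\times B}=\idd$ follow from the analogous identities for $H_l$ together with the naturality of $\partial_\varepsilon$ applied to the two projections. Since homotopy equivalences with respect to an exact cylinder become isomorphisms in the homotopy category, $\iota\times\idd_B$ is a weak equivalence in $\ssetisov\downarrow B$, concluding the proof.

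The main obstacle is precisely this homotopy-transfer step: the cylinder $\interval$ does not in general commute with products, so one cannot just apply $-\times\idd_B$ to $H_l$. The fix is to build $H'$ directly out of $\interval(\Delta^{n,k}\times B)$ by pushing the two factors through $\interval(\pr_1)$ and $\rho^B\circ\interval(\pr_B)$, after which naturality of $\rho$ and $\partial_\varepsilon$ relative to the projections delivers both fibredness over $B$ and the correct endpoint values. All remaining ingredients are formal.
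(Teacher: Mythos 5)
Your reduction of the trivial-cofibration case to the admissible horn inclusions contains a genuine gap. The results you cite (\Cref{Proposition:Admissible horn inclusions is a class of anodyne extensions}, \Cref{lemma: saturation of admissible horn inclusions is the same as saturation of cisinski's class}, \Cref{Lemma:Cisinski 1.3.52}) identify the weakly saturated class generated by the admissible horn inclusions over $B$ with $\text{An}_B$, the class of $\interval_B$-anodyne extensions. They do \emph{not} identify it with the class of trivial cofibrations of the model structure from \Cref{Theorem: Model structure for ssisov}. In a Cisinski model structure produced by Théorème 1.3.22, the anodyne extensions are always trivial cofibrations (cf.\ \cite[Proposition 1.3.31]{Cisinski2016LPCMDTH}), but the converse can fail: the anodyne extensions characterize fibrations \emph{between fibrant objects}, and the generating trivial cofibrations obtained by the small object argument inside Cisinski's theorem are a possibly strictly larger set. (Compare the Joyal model structure, where the inner anodyne maps are a proper subclass of the trivial cofibrations.) Consequently, showing that each map $\Lambda^{n,k}_l\times B \to \Delta^{n,k}\times B$ is a weak equivalence — which your explicit $H'$-construction does accomplish, and that part is essentially correct, including the fiberedness check via $\pr_B\circ\rho^{\Delta^{n,k}\times B}=\rho^B\circ\interval(\pr_B)$ — is not on its own enough to conclude that $U^{\ast}\circ U_!$ sends \emph{all} trivial cofibrations to weak equivalences.

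The paper's own proof sidesteps this altogether. It invokes \cite[Corollary A.2]{Dugger2001}, which (since every object in $\ssetisov\downarrow B$ is cofibrant, $B$ being normal) reduces the Quillen condition to: $U^{\ast}\circ U_!$ preserves cofibrations (between cofibrant objects) and sends trivial cofibrations to weak equivalences. For the first, it uses that over a normal base all monomorphisms are automatically normal (\enumref{Paragraph:Properties_normal_monomorphisms}{2}, \enumref{Paragraph:Properties_normal_monomorphisms}{3}), much as you do. For the second, rather than chasing generators, it cites \cite[Proposition 2.4.15]{Cisinski2019} to deduce directly that $f\times\idd_B$ is a fiberwise homotopy equivalence whenever $f$ is a weak equivalence — no identification of a generating set is required. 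To repair your argument along its present lines, you would either need a criterion of the form ``if $L$ preserves cofibrations and sends anodyne extensions to anodyne extensions, then $L$ is left Quillen'' (such statements exist in Cisinski's framework but require justification), or you would need to switch to the paper's direct argument that the product with a fixed presheaf preserves the weak equivalences of a Cisinski model structure.
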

	
	\begin{proof}
		Using \cite[Corollary A.2]{Dugger2001}, it suffices to show that the left adjoint $U^{\ast}\circ U_{!}$ preserves both cofibrations between cofibrant objects and trivial cofibrations. Notice that a cofibration between cofibrant objects $f: (X,p_X)\lto (Y,p_Y)$ amounts to say that $f:X \lto Y$ is a normal monomorphism and both $X$ and $Y$ are normal. A direct application of \enumref{Paragraph:Properties_normal_monomorphisms}{1} and \enumref{Paragraph:Properties_normal_monomorphisms}{2} implies that $X \times B$ and $Y\times B$ are both normal and the morphism $(U^{\ast} \circ U)(f)\coloneq (f, \idd_B)$:
		\[\begin{tikzcd}
			{X \times B} && {Y \times B} \\
			& B
			\arrow["{\pr_2}"', from=1-1, to=2-2]
			\arrow["{\pr_2}", from=1-3, to=2-2]
			\arrow["{(U^{\ast} \circ U)(f) }", from=1-1, to=1-3]
		\end{tikzcd}\]
		is a normal monomorphism. This implies that $(U^{\ast} \circ U)(f)$ is a cofibration in $\ssetisov \downarrow B$. Additionally, if $f$ is a trivial cofibration, by using \cite[Proposition 2.4.15]{Cisinski2019} it it easy to show that $f \times \idd_B$ is also a fiberwise homotopy equivalence.
	\end{proof}	
	
We would like to find a suitable normal simplicial set $B$ in such a way that the adjunction $(U_{!} \dashv U^{\ast})$ satisfies the conditions of the right-transfer theorem \cite[\S 9.1 ]{dwyer1997model} and thus $\ssetisov$ will admit a right-transfered cofibrantly generated model category structure. A necessary condition for the above to be true is that the morphism $B \lto \ast$ is a weak equivalence. This is exactly the same situation as in \cite[Proposition 3.12]{Cisinski2011}.

Recall that the class of normal monomorphisms in $\ssetisov$ form a saturated class and moreover it is the smallest saturated class containing the boundary inclusions. The usual small object argument then implies that any isovariant simplicial set map $X \lto Y$ factors as a normal monomorphism $X \lto Z$ followed by a morphism $Z \lto Y$ having the right lifting property with respect to all normal monomorphisms. In particular if we consider the canonical normal monomorphism $\emptyset \lto X$ from the initial object of $\ssetisov$ to any $X$ in $\ssetisov$ there exists $\widehat{X}$ normal and $\widehat{X} \lto X$ such that the diagram
\[\begin{tikzcd}
	\emptyset && X \\
	& {\widehat{X}}
	\arrow[tail, from=1-1, to=2-2]
	\arrow[from=1-1, to=1-3]
	\arrow[two heads, from=2-2, to=1-3]
\end{tikzcd}\]
commutes and $\widehat{X}\lto X$ has the right lifting property with respect to normal monomorphisms (equivalently it is a trivial fibration).
Such $\widehat{X}$ will be called a \emph{normalization} of $X$. These normalizations turn out to be unique up to $\interval$-homotopy equivalence.

\begin{lemma}[{\cite[Lemma 8.1.5]{Moerdijk2010}}]\label{Lemma:normalizations are unique up to homotopy}
	Let $\widehat{X}$ and $\widetilde{X}$ be two normalizations of an isovariant simplicial set $X$. Then there exists an isovariant homotopy equivalence $\widehat{X}\lto \widetilde{X}$.
\end{lemma}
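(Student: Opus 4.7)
The plan is to adapt the standard argument that any two cofibrant replacements are homotopy equivalent to the present setting, relying only on the lifting property of trivial fibrations against normal monomorphisms together with the exact cylinder $\interval$ constructed in \Cref{Subsection:Exact cylinders}.

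First, I would produce maps $f: \widehat{X} \lto \widetilde{X}$ and $g: \widetilde{X} \lto \widehat{X}$ over $X$ by lifting. Since $\widehat{X}$ is normal, the initial morphism $\emptyset \lto \widehat{X}$ is a normal monomorphism, hence a cofibration. The normalization $\widetilde{X} \lto X$ is a trivial fibration and so has the right lifting property with respect to $\emptyset \lto \widehat{X}$; the lift against the structural map $\widehat{X} \lto X$ yields $f$, which is automatically a morphism over $X$. Symmetrically one obtains $g$.

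Second, I would upgrade $f$ and $g$ to mutual homotopy inverses by producing elementary isovariant homotopies $gf \simeq \idd_{\widehat{X}}$ and $fg \simeq \idd_{\widetilde{X}}$. The key technical ingredient is that the canonical map $(\partial_0^{\widehat{X}},\partial_1^{\widehat{X}}): \widehat{X} \coprod \widehat{X} \lto \interval \widehat{X}$ is a \emph{normal} monomorphism. It is a monomorphism by the exact cylinder axioms from \Cref{prop: left kan extension of R is the cylinder object for issov spaces}, and $\interval \widehat{X}$ is normal because the retraction $\rho^{\widehat{X}}: \interval \widehat{X} \lto \widehat{X}$ maps into a normal object, whence \enumref{Paragraph:Properties_normal_monomorphisms}{2} applies; normality of the monomorphism itself then follows from \enumref{Paragraph:Properties_normal_monomorphisms}{3}. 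Consider now the square with top arrow $(gf,\idd_{\widehat{X}}): \widehat{X} \coprod \widehat{X} \lto \widehat{X}$ and bottom arrow $\interval \widehat{X} \overset{\rho^{\widehat{X}}}{\lto} \widehat{X} \lto X$. Because $f$ and $g$ are morphisms over $X$, both $gf$ and $\idd_{\widehat{X}}$ lie over $X$, so the square commutes; lifting it against the trivial fibration $\widehat{X} \lto X$ produces the required elementary homotopy $H: \interval \widehat{X} \lto \widehat{X}$ with $H\partial_0^{\widehat{X}}=gf$ and $H\partial_1^{\widehat{X}}=\idd_{\widehat{X}}$. Exchanging the roles of $\widehat{X}$ and $\widetilde{X}$ yields the other homotopy, so $f$ is an isovariant homotopy equivalence with inverse $g$.

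The only non-routine point is verifying the normality of $(\partial_0^{\widehat{X}},\partial_1^{\widehat{X}})$; once this is in hand the rest is a direct transcription of the classical small-object argument for uniqueness of cofibrant replacements, and no further obstacle is expected.
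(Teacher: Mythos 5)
Your argument is correct and is precisely the standard proof from the cited reference (Moerdijk's Lemma 8.1.5, which the paper invokes without reproving): lift along the trivial fibrations to obtain maps back and forth over $X$, then lift the map $(gf,\idd)\colon\widehat{X}\amalg\widehat{X}\to\widehat{X}$ against the trivial fibration $\widehat{X}\to X$ along the normal monomorphism $\widehat{X}\amalg\widehat{X}\to\interval\widehat{X}$ to produce the required elementary homotopy, and symmetrically. Your justification that $\widehat{X}\amalg\widehat{X}\to\interval\widehat{X}$ is a normal monomorphism — $\interval\widehat{X}$ is normal because $\rho^{\widehat{X}}$ maps it into the normal object $\widehat{X}$ (\enumref{Paragraph:Properties_normal_monomorphisms}{2}), hence any mono into it is normal (\enumref{Paragraph:Properties_normal_monomorphisms}{3}) — is exactly the right way to carry this over to the isovariant setting, and the commutativity check that both $gf$ and $\idd$ lie over $X$ is handled correctly.
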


\begin{proposition}\label{Prop:quillenequivalencetoitself}
	Let $E^{\infty}$ be a normalization of the terminal isovariant simplicial set $\ast$ and $\begin{tikzcd}
		U: E^{\infty} \arrow[r,two heads] &\ast
	\end{tikzcd}$ the normalization morphism. Then the adjunction from \eqref{Eq:adjunction fiberwise to itself}: \[\adjto{U^{\ast} \circ U_{!}}{\ssetisov \downarrow E^{\infty}}{\ssetisov \downarrow E^{\infty}}{U^{\ast} \circ U_{\ast}} \]
	is a Quillen equivalence. 
\end{proposition}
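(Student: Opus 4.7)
The plan is to invoke the standard criterion (cf.\ \cite[\nopp 1.3.16]{Cisinski2016LPCMDTH}): a Quillen adjunction is a Quillen equivalence if and only if the derived unit is a weak equivalence at every cofibrant object and the derived counit is a weak equivalence at every fibrant object. Since we have already shown in \Cref{Prop:fiberwise adjunction is quillen} that the adjunction is Quillen, it remains only to verify this condition.

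First I would decompose the adjunction $(U^{\ast}U_{!}, U^{\ast}U_{\ast})$ as the composite of $U_{!}\dashv U^{\ast}$ and $U^{\ast}\dashv U_{\ast}$. At a cofibrant $(X \xrightarrow{p_X} E^{\infty})$, the unit $\eta_X$ then factors as
\[
X \xrightarrow{\;\gamma_X\;} X \times E^{\infty} \xrightarrow{\;\alpha_X\;} U^{\ast}U_{\ast}U^{\ast}U_{!}X,
\]
where $\gamma_X$ is the \emph{graph map} $x \mapsto (x, p_X(x))$, which is a morphism in $\ssetisov \downarrow E^{\infty}$ because $\mathrm{pr}_2 \circ \gamma_X = p_X$, and $\alpha_X$ comes from applying $U^{\ast}$ to the unit of $U^{\ast}\dashv U_{\ast}$ at $U_{!}X$.

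The main step is to prove that $\gamma_X$ is a weak equivalence in $\ssetisov \downarrow E^{\infty}$ for every cofibrant $X$. The key input is that $U:E^{\infty}\to\ast$ is, by construction, a trivial fibration in $\ssetisov$ (it has the right lifting property with respect to normal monomorphisms). Since normal monomorphisms are stable under pullback (using \enumref{Paragraph:Properties_normal_monomorphisms}{2}--\enumref{Paragraph:Properties_normal_monomorphisms}{3}, given that $E^{\infty}$ is normal), the pullback $\mathrm{pr}_1: X \times E^{\infty} \to X$ is again a trivial fibration in $\ssetisov$. The graph map $\gamma_X$ is then a section of $\mathrm{pr}_1$ and is itself a normal monomorphism. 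I would then use this together with the exact cylinder $\interval_{E^\infty}$ to construct an explicit fiberwise isovariant homotopy over $E^{\infty}$ between $\mathrm{id}_{X \times E^{\infty}}$ and $\gamma_X \circ q$ for a suitable $q$, exhibiting $\gamma_X$ as an $\interval_{E^{\infty}}$-deformation retract and hence a weak equivalence by \Cref{Definition: isovariant weak equivalences, cofibs, fibs}. A symmetric argument shows that $\alpha_X$ is also a weak equivalence, so by two-out-of-three the unit $\eta_X$ is a weak equivalence.

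For the counit at a fibrant $(Y\xrightarrow{p_Y} E^{\infty})$, the same pattern applies: decompose the counit $\varepsilon_Y$ through the intermediate object using the counit of $U^{\ast}\dashv U_{\ast}$ and the counit of $U_{!}\dashv U^{\ast}$, both of which reduce to maps built from the graph/projection against $E^{\infty}$. The hard part will be the construction of the explicit fiberwise homotopy over $E^{\infty}$ mentioned above: while contractibility of $E^{\infty}$ in the absolute sense is immediate (since $E^{\infty}\to\ast$ is a trivial fibration), translating this into a \emph{fiberwise} homotopy in $\ssetisov\downarrow E^{\infty}$ is delicate because the absolute projection $\mathrm{pr}_1$ is not itself a morphism over $E^{\infty}$. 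This step is precisely the isovariant analogue of \cite[Proposition 3.12]{Cisinski2011} for dendroidal sets, and the construction there provides the template to adapt.
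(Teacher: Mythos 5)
Your plan diverges from the paper's argument in a way that introduces a genuine gap. The paper invokes \cite[Corollary 1.3.16]{Hovey1999} in its formulation: ``the left Quillen functor reflects weak equivalences between cofibrant objects, and $LQR(Y)\to Y$ is a weak equivalence for every fibrant $Y$.'' This sidesteps derived units entirely. You instead propose to verify that the derived unit and derived counit are weak equivalences, but then you write the target of the unit as $U^{\ast}U_{\ast}U^{\ast}U_{!}X$ without inserting the fibrant replacement of $U^{\ast}U_!X = X\times E^{\infty}$. Since $X\times E^{\infty}\to E^{\infty}$ is not fibrant in $\ssetisov\downarrow E^{\infty}$ in general (that would require the absolute isovariant simplicial set $X$ to be $\interval$-fibrant, which is not hypothesised), the composite $X\to RLX$ is not the derived unit, and showing it is a weak equivalence does not directly establish the Quillen-equivalence criterion. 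Moreover, your claim that ``a symmetric argument'' handles $\alpha_X = U^{\ast}(\eta^2_{U_!X})$ is not justified: unwinding, $U_{\ast}U^{\ast}(U_!X)$ is essentially $\Map(E^{\infty},X)$, and the map $X\to\Map(E^{\infty},X)$ given by constant sections has no reason to be a weak equivalence unless $X$ is fibrant, which brings you back to the same problem.

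The paper's actual route is shorter and avoids both issues. For the reflection property, one takes a map $f$ with $L(f)$ a weak equivalence; the graph maps $(\idd_X,p_X): X\to X\times E^{\infty}$ and $(\idd_Y,p_Y): Y\to Y\times E^{\infty}$ are sections of the trivial fibrations $X\times E^{\infty}\to X$ and $Y\times E^{\infty}\to Y$ (pulling back $E^{\infty}\twoheadrightarrow\ast$), hence isovariant anodyne extensions by the analogue of \cite[Lemma 3.9]{Cisinski2011}, hence weak equivalences by \cite[Proposition 1.3.31]{Cisinski2016LPCMDTH}; then 2-out-of-3 in the commuting square gives that $f$ is a weak equivalence. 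For the second condition, since every object of $\ssetisov\downarrow E^{\infty}$ is cofibrant (everything over $E^{\infty}$ is normal by \enumref{Paragraph:Properties_normal_monomorphisms}{2}), $Q$ can be dropped and $LR(Y)\to Y$ is factored as $U_{\ast}(Y)\times E^{\infty}\times E^{\infty}\twoheadrightarrow U_{\ast}(Y)\times E^{\infty}\to Y$, both of which are weak equivalences again via the section-of-trivial-fibration lemma. Your instinct to use the anodyne-section argument and the Cisinski--Moerdijk template is right; you should drop the unit--counit formulation in favour of the reflection criterion, which is what makes the section argument do all the work without any fibrancy hypothesis on $X$.
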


\begin{proof}
	Let us denote by $L \coloneqq U^{\ast} \circ U_{!}$ and $R \coloneqq U^{\ast} \circ U_{\ast}$. By using \cite[Corollary 1.3.16]{Hovey1999}, it suffices to check that:
	\begin{enumerate}[label=\stlabel{Prop:quillenequivalencetoitself}, ref={\arabic*}]
		\item \label{Prop:quillenequivalencetoitself.1} The left adjoint $L$ reflects weak equivalences between cofibrant objects and,
		\item \label{Prop:quillenequivalencetoitself.2} For every fibrant object $Y$, the morphism $LQR(Y) \lto Y$ is a weak equivalence. Where $Q$ denotes the cofibrant replacement functor on $\ssetisov \downarrow B$.
	\end{enumerate}
	   Nevertheless, notice that any object $X\lto E^{\infty}$ is cofibrant for $E^{\infty}$ is normal and \enumref{Paragraph:Properties_normal_monomorphisms}{2} implies that $X$ is normal as well. As a consequence the canonical morphism $\emptyset \lto X$ in $\ssetisov \downarrow E^{\infty}$ is a cofibration. Therefore, \enumref{Prop:quillenequivalencetoitself}{2} reduces to check that for any fibrant object $Y$, the morphism $LR(Y) \lto Y$ is a weak equivalence. Let us first prove \enumref{Prop:quillenequivalencetoitself}{1}. Suppose $f:X \lto Y$ is a morphism in $\ssetisov \downarrow E^{\infty}$ such that $L(f): X \times E^{\infty} \lto Y \times E^{\infty}$ is a weak equivalence in $\ssetisov\downarrow E^{\infty}$. Since $X$ and $Y$ are normal, the morphisms $(\idd_X, p_X):X\lto X \times E^{\infty}$ and $(\idd_Y, p_Y): Y \lto Y\times E^{\infty}$ are isovariant anodyne extensions. This follows in exactly the same way as in \cite[Lemma 3.9]{Cisinski2011} because $(\idd_X, p_X)$ and $(\idd_Y, p_Y)$ are, respectively, sections of the projection maps $X \times E^{\infty} \lto X$ and $Y\times E^{\infty} \lto Y$ and these last projections are trivial fibrations by definition of $E^{\infty}$. Moreover, since isovariant anodyne extensions are weak equivalences (see \cite[Proposition 1.3.31]{Cisinski2016LPCMDTH}) we have that $(\idd,p_X)$ and $(\idd,p_Y)$ are both weak equivalences. Finally we have a commutative diagram 
	   \[\begin{tikzcd}
	   	X & Y \\
	   	{X \times E^{\infty}} & {Y \times E^{\infty}}
	   	\arrow["f", from=1-1, to=1-2]
	   	\arrow["{(\idd,p_X)}"', from=1-1, to=2-1]
	   	\arrow["{L(f)}"', from=2-1, to=2-2]
	   	\arrow["{(\idd,p_Y)}", from=1-2, to=2-2]
	   	\arrow[bend right={-30pt}, from=2-2, to=1-2]
	   	\arrow[bend right={30pt}, from=2-1, to=1-1]
	   \end{tikzcd}\]
	   Notice that this diagram is commutative even in the category $\ssetisov \downarrow E^{\infty}$. By the 2-out-of-3 property for weak equivalences we conclude that $f$ is a weak equivalence in $\ssetisov \downarrow E^{\infty}$. 
	   
	   Suppose now that $p_Y: Y\lto E^{\infty}$ is fibrant. Recall that this means that $p_Y$ has the right lifting property with respect to isovariant anodyne extensions. Notice that this implies that the projection $U_{\ast}(Y)\times E^{\infty} \lto E^{\infty}$ also has the right lifting property with respect to isovariant anodyne extensions so that $U_{\ast}(Y)\times E^{\infty} \lto E^{\infty}$ is fibrant. Moreover, the canonical morphism $U_{\ast}(Y) \times E^{\infty} \lto Y$ in $\ssetisov \downarrow E^{\infty}$ is a weak equivalence and the morphism $LR(Y)\lto Y$ writes as the composite: 
	   
	\[\begin{tikzcd}
		{U_{\ast}(Y)\times E^{\infty}\times E^{\infty}} & {U_{\ast}(Y)\times E^{\infty}} & Y
		\arrow["\sim", two heads, from=1-1, to=1-2]
		\arrow["\sim", from=1-2, to=1-3]
	\end{tikzcd}\]
and again by \cite[Lemma 3.9]{Cisinski2011} the map ${U_{\ast}(Y)\times E^{\infty}\times E^{\infty}} \lto U_{\ast}(Y) \times E^{\infty}$ is a weak equivalence. As a consequence, $LR(Y) \lto Y$ is a weak equivalence.
\end{proof}

\begin{corollary}\label{Corollary:modelstructureabsolute}
	The adjunction $\adjto{U_{!}}{\ssetisov \downarrow E^{\infty}}{\ssetisov}{U^{\ast}}$ satisfies the hypothesis of \emph{\cite[\S 9.1]{dwyer1997model}}. In particular $\ssetisov$ is endowed with a cofibrantly generated model category structure in which the cofibrations are the normal monomorphisms, the fibrant objects are the $\interval$-fibrant objects, and the fibrations between fibrant objects are the isovariant fibrations. Weak equivalences are given by the isovariant homotopy equivalences and in particular for any $X$ normal and $Y$ fibrant we have that 
	\[[X,Y]_{\emph{isov}} = \Hom_{\catname{Ho}(\ssetisov)}(X,Y).\]
\end{corollary}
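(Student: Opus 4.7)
The plan is to apply the right-transfer theorem of \cite[\S 9.1]{dwyer1997model} to the adjunction
\[
\adjto{U_{!}}{\ssetisov\downarrow E^\infty}{\ssetisov}{U^{\ast}}
\]
and then identify the resulting classes with those stated. Completeness and cocompleteness of $\ssetisov$ are automatic since it is a presheaf category, and the generating cofibrations and trivial cofibrations of $\ssetisov\downarrow E^\infty$ produced by \Cref{Theorem: Model structure for ssisov} have small domains which remain small after applying $U_!$. The only substantive axiom to verify is that every relative cell complex built from maps of the form $U_!(j)$, with $j$ a fiberwise isovariant anodyne extension, is sent by $U^\ast$ to a weak equivalence in $\ssetisov\downarrow E^\infty$.

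To verify this axiom I will exploit the Quillen equivalence $(U^\ast\circ U_!)\dashv (U^\ast\circ U_\ast)$ of \Cref{Prop:quillenequivalencetoitself}. Because $E^\infty$ is normal, \enumref{Paragraph:Properties_normal_monomorphisms}{2} implies that every object of $\ssetisov\downarrow E^\infty$ is cofibrant, and so by Ken Brown's lemma the Quillen left adjoint $U^\ast\circ U_!$ preserves all weak equivalences. In particular, for each generating fiberwise anodyne extension $j$, the map $U^\ast U_!(j) = j\times \idd_{E^\infty}$ is a trivial cofibration in $\ssetisov\downarrow E^\infty$. Since $U^\ast$ is both a left and a right adjoint, it preserves all small colimits; combined with the closure of trivial cofibrations under pushouts and transfinite composition, this shows that $U^\ast$ sends any relative $U_!(J)$-cell complex to a weak equivalence, which is exactly the hypothesis required to run the transfer.

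It remains to identify the classes of the transferred model structure. The generating cofibrations are exactly the boundary inclusions $\partial\Delta^{n,k}\linj \Delta^{n,k}$, whose saturation is the class of normal monomorphisms by the results of \S\ref{Paragraph:Properties_normal_monomorphisms}. An object $X$ is fibrant in the transferred structure iff $X\times E^\infty\lto E^\infty$ is fibrant in $\ssetisov\downarrow E^\infty$, which via the Quillen equivalence of \Cref{Prop:quillenequivalencetoitself} and the fact that $X\times E^\infty\lto X$ is a trivial fibration is equivalent to $X$ being $\interval$-fibrant; the analogous argument identifies fibrations between fibrant objects with the isovariant fibrations of \Cref{Definition:isovariant anodyne extension}. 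Finally, the weak equivalences coincide with isovariant homotopy equivalences by the standard comparison of Cisinski's cylinder-based homotopy with the model-categorical notion \cite[Th\'eor\`eme 1.3.22]{Cisinski2016LPCMDTH}, which also yields the identification $[X,Y]_{\text{isov}} = \Hom_{\catname{Ho}(\ssetisov)}(X,Y)$ for $X$ normal and $Y$ fibrant. I expect the main technical obstacle to be the cell-complex verification in the second paragraph; the remainder is essentially a matter of matching definitions against the structure already established in the preceding sections.
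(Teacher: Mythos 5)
The paper does not write out a proof of this corollary, so there is no official argument to compare against; your proposal supplies a reasonable one, and the overall strategy — apply the right-transfer theorem, verify the cell-complex hypothesis by exploiting the Quillen adjunction together with the observation that normality of $E^{\infty}$ forces every object of $\ssetisov\downarrow E^{\infty}$ to be cofibrant, then match up the distinguished classes — is exactly the one that the surrounding results (\Cref{Prop:fiberwise adjunction is quillen}, \Cref{Prop:quillenequivalencetoitself}) are set up to support, and it mirrors what Cisinski--Moerdijk do for dendroidal sets. Your verification of the transfer condition is correct: $U^{\ast}\circ U_{!}$ is left Quillen (\Cref{Prop:fiberwise adjunction is quillen} suffices here; you do not actually need the Quillen equivalence for this step), all objects are cofibrant, so by Ken Brown's lemma it preserves all weak equivalences and hence sends every generating trivial cofibration to a trivial cofibration, and since $U^{\ast}$ is also a left adjoint it preserves the colimits building a relative cell complex. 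One wording caveat: you should say ``generating trivial cofibration'' rather than ``generating fiberwise isovariant anodyne extension,'' since the generating trivial cofibrations of a Cisinski model structure need not literally be the anodyne extensions; the argument is unaffected because a left Quillen functor that preserves all weak equivalences preserves all trivial cofibrations.

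The genuine gap is in the last paragraph, in the identification of the weak equivalences. You invoke \cite[Th\'eor\`eme 1.3.22]{Cisinski2016LPCMDTH} to conclude that the transferred weak equivalences are the isovariant homotopy equivalences of \Cref{Definition:isovariant homotopy}, but that theorem describes the weak equivalences of the \emph{fiberwise} Cisinski model structure on $\ssetisov\downarrow E^{\infty}$, where they are defined directly via the cylinder $\interval_{E^{\infty}}$ and fibrant objects of the slice. The transferred structure on $\ssetisov$ is not a priori of that form: its weak equivalences are \emph{created by} $U^{\ast}$, i.e.\ $f$ is a weak equivalence iff $f\times \idd_{E^{\infty}}$ is a fiberwise homotopy equivalence over $E^{\infty}$. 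Translating this characterization back into the na\"ive $\interval$-homotopy theory on $\ssetisov$ is not a formal consequence of 1.3.22 and needs a separate argument. One direction is easy — the naturality of $(\partial_0,\partial_1,\rho)$ shows that an $\interval$-homotopy $H\colon \interval X\lto Y$ induces an $\interval_{E^{\infty}}$-homotopy $\interval(X\times E^{\infty})\lto Y\times E^{\infty}$, so an isovariant homotopy equivalence is sent by $U^{\ast}$ to a fiberwise one — but the converse requires descending a fiberwise homotopy inverse of $f\times\idd_{E^{\infty}}$ down to a map $Y\lto X$, and the natural way to do this uses a section $Y\lto Y\times E^{\infty}$, i.e.\ a map $Y\lto E^{\infty}$, which exists only when $Y$ is normal. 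So as stated the identification is not fully justified, and the claim likely requires at least the normality hypothesis that is already present in the final displayed equation. The same issue underlies the last identification $[X,Y]_{\text{isov}}=\Hom_{\catname{Ho}(\ssetisov)}(X,Y)$: it follows from the standard cofibrant/fibrant comparison only after one has checked that $\interval$ is a cylinder object for the transferred structure, in particular that $\rho^X\colon\interval X\lto X$ is a weak equivalence, which in turn uses that $\partial_0^X$ is an anodyne extension for $X$ normal (a special case of the class $\catcal{C}$ from \Cref{lemma: saturation of admissible horn inclusions is the same as saturation of cisinski's class} with $K=\emptyset$).
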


	\DeclareFieldFormat{labelnumberwidth}{#1}
	\printbibliography[keyword=alph, heading=references]
	\DeclareFieldFormat{labelnumberwidth}{{#1\adddot\midsentence}}
	\printbibliography[heading=references, notkeyword=alph]

@string{C = {Carbon}}

@string{F = {Fuel}}

@string{I = {Ind. Eng. Chem. Res.}}

@book {Hovey1999,
	AUTHOR = {Hovey, Mark},
	TITLE = {Model categories},
	SERIES = {Mathematical Surveys and Monographs},
	VOLUME = {63},
	PUBLISHER = {American Mathematical Society, Providence, RI},
	YEAR = {1999},
	PAGES = {xii+209},
	ISBN = {0-8218-1359-5},
	MRCLASS = {55U35 (18D15 18G30 18G55)},
	MRNUMBER = {1650134},
	MRREVIEWER = {Teimuraz\ Pirashvili},
}

@article {Dugger2001,
	AUTHOR = {Dugger, Daniel},
	TITLE = {Replacing model categories with simplicial ones},
	JOURNAL = {Trans. Amer. Math. Soc.},
	FJOURNAL = {Transactions of the American Mathematical Society},
	VOLUME = {353},
	YEAR = {2001},
	NUMBER = {12},
	PAGES = {5003--5027},
	ISSN = {0002-9947,1088-6850},
	MRCLASS = {55U35 (18G30 18G55)},
	MRNUMBER = {1852091},
	MRREVIEWER = {David\ A.\ Blanc},
	DOI = {10.1090/S0002-9947-01-02661-7},
	URL = {https://doi.org/10.1090/S0002-9947-01-02661-7},
}

@book {MacLane1994,
	AUTHOR = {Mac Lane, Saunders and Moerdijk, Ieke},
	TITLE = {Sheaves in geometry and logic},
	SERIES = {Universitext},
	NOTE = {A first introduction to topos theory,
	Corrected reprint of the 1992 edition},
	PUBLISHER = {Springer-Verlag, New York},
	YEAR = {1994},
	PAGES = {xii+629},
	ISBN = {0-387-97710-4},
	MRCLASS = {03G30 (18B25 54B40)},
	MRNUMBER = {1300636},
	MRREVIEWER = {M.\ Makkai},
}

@book {borceux1994handbook,
	AUTHOR = {Borceux, Francis},
	TITLE = {Handbook of categorical algebra. 1},
	SERIES = {Encyclopedia of Mathematics and its Applications},
	VOLUME = {50},
	NOTE = {Basic category theory},
	PUBLISHER = {Cambridge University Press, Cambridge},
	YEAR = {1994},
	PAGES = {xvi+345},
	ISBN = {0-521-44178-1},
	MRCLASS = {18-02 (18Axx)},
	MRNUMBER = {1291599},
	MRREVIEWER = {Martin\ Hyland},
	keywords = {primary}
}

@incollection {Browder1973,
	AUTHOR = {Browder, William and Quinn, Frank},
	TITLE = {A surgery theory for {$G$}-manifolds and stratified sets},
	BOOKTITLE = {Manifolds---{T}okyo 1973 ({P}roc. {I}nternat. {C}onf.,
	{T}okyo, 1973)},
	PAGES = {27--36},
	PUBLISHER = {Published for the Mathematical Society of Japan by University
	of Tokyo Press, Tokyo},
	YEAR = {1975},
	MRCLASS = {57D65},
	MRNUMBER = {375348},
	MRREVIEWER = {Edgar\ H.\ Brown, Jr.},
	keywords = {primary}
}

@article {Cisinski2016LPCMDTH,
	AUTHOR = {Cisinski, Denis-Charles},
	TITLE = {Les pr\'{e}faisceaux comme mod\`eles des types d'homotopie},
	JOURNAL = {Ast\'{e}risque},
	FJOURNAL = {Ast\'{e}risque},
	NUMBER = {308},
	YEAR = {2006},
	PAGES = {xxiv+390},
	ISSN = {0303-1179,2492-5926},
	ISBN = {978-2-85629-225-9},
	MRCLASS = {55-02 (18F20 18G50 55P60 55U35)},
	MRNUMBER = {2294028},
	MRREVIEWER = {Philippe\ Gaucher},
	keywords = {primary},
}

@book {Cisinski2019,
	AUTHOR = {Cisinski, Denis-Charles},
	TITLE = {Higher categories and homotopical algebra},
	SERIES = {Cambridge Studies in Advanced Mathematics},
	VOLUME = {180},
	PUBLISHER = {Cambridge University Press, Cambridge},
	YEAR = {2019},
	PAGES = {xviii+430},
	ISBN = {978-1-108-47320-0},
	MRCLASS = {18-02 (18D05 18F20 18G55 55U10 55U35)},
	MRNUMBER = {3931682},
	MRREVIEWER = {Charles\ Rezk},
	DOI = {10.1017/9781108588737},
	URL = {https://doi.org/10.1017/9781108588737},
	keywords  = {primary}
}

@article {Cisinski2011,
	AUTHOR = {Cisinski, Denis-Charles and Moerdijk, Ieke},
	TITLE = {Dendroidal sets as models for homotopy operads},
	JOURNAL = {J. Topol.},
	FJOURNAL = {Journal of Topology},
	VOLUME = {4},
	YEAR = {2011},
	NUMBER = {2},
	PAGES = {257--299},
	ISSN = {1753-8416,1753-8424},
	MRCLASS = {55P48 (18D10 18G30 18G55 55U10 55U40)},
	MRNUMBER = {2805991},
	MRREVIEWER = {Julia\ Bergner},
	DOI = {10.1112/jtopol/jtq039},
	URL = {https://doi.org/10.1112/jtopol/jtq039},
	keywords = {primary}
}

@article {CRANS199535,
	AUTHOR = {Crans, Sjoerd E.},
	TITLE = {Quillen closed model structures for sheaves},
	JOURNAL = {J. Pure Appl. Algebra},
	FJOURNAL = {Journal of Pure and Applied Algebra},
	VOLUME = {101},
	YEAR = {1995},
	NUMBER = {1},
	PAGES = {35--57},
	ISSN = {0022-4049,1873-1376},
	MRCLASS = {18D15 (18F20)},
	MRNUMBER = {1346427},
	DOI = {10.1016/0022-4049(94)00033-F},
	URL = {https://doi.org/10.1016/0022-4049(94)00033-F},
	keywords = {primary}
}

@book {SchultzDovermann1990,
	AUTHOR = {Dovermann, Karl Heinz and Schultz, Reinhard},
	TITLE = {Equivariant surgery theories and their periodicity properties},
	SERIES = {Lecture Notes in Mathematics},
	VOLUME = {1443},
	PUBLISHER = {Springer-Verlag, Berlin},
	YEAR = {1990},
	PAGES = {vi+227},
	ISBN = {3-540-53042-8},
	MRCLASS = {57R67 (57S17)},
	MRNUMBER = {1077825},
	MRREVIEWER = {Amir\ H.\ Assadi},
	DOI = {10.1007/BFb0092354},
	URL = {https://doi.org/10.1007/BFb0092354},
}

@incollection {Schultz1992,
	AUTHOR = {Schultz, Reinhard},
	TITLE = {Isovariant homotopy theory and differentiable group actions},
	BOOKTITLE = {Algebra and topology 1992 ({T}aej\u{o}n)},
	PAGES = {81--148},
	PUBLISHER = {Korea Adv. Inst. Sci. Tech., Taej\u{o}n},
	YEAR = {1992},
	MRCLASS = {57S17 (55P91 57R67 57R91)},
	MRNUMBER = {1212981},
	MRREVIEWER = {James\ Frederic\ Davis},
}

@unpublished{dwyer1997model,
	author = {William G. Dwyer, Philip S. Hirschhorn, Daniel M. Kan},
	title = {Model Categories and More General Abstract Homotopy Theory},
	date = {1997},
}

@book {Moerdijk2010,
	AUTHOR = {Moerdijk, Ieke and To\"{e}n, Bertrand},
	TITLE = {Simplicial methods for operads and algebraic geometry},
	SERIES = {Advanced Courses in Mathematics. CRM Barcelona},
	EDITOR = {Casacuberta, Carles and Kock, Joachim},
	PUBLISHER = {Birkh\"{a}user/Springer Basel AG, Basel},
	YEAR = {2010},
	PAGES = {x+186},
	ISBN = {978-3-0348-0051-8},
	MRCLASS = {18G30 (14A20 14D20 18D50 18G55)},
	MRNUMBER = {2797154},
	MRREVIEWER = {Beno\^{i}t\ Fresse},
	DOI = {10.1007/978-3-0348-0052-5},
	URL = {https://doi.org/10.1007/978-3-0348-0052-5},
}

@article {douteau2020simplicial,
	AUTHOR = {Douteau, Sylvain},
	TITLE = {A simplicial approach to stratified homotopy theory},
	JOURNAL = {Trans. Amer. Math. Soc.},
	FJOURNAL = {Transactions of the American Mathematical Society},
	VOLUME = {374},
	YEAR = {2021},
	NUMBER = {2},
	PAGES = {955--1006},
	ISSN = {0002-9947,1088-6850},
	MRCLASS = {55U35 (18N40 18N50 57N80)},
	MRNUMBER = {4196384},
	MRREVIEWER = {Viktoriya\ Ozornova},
	DOI = {10.1090/tran/8264},
	URL = {https://doi.org/10.1090/tran/8264},
	keywords = {primary}
}

@misc{stacks-project,
	shorthand    = {Stacks},
	keywords = {primary},
	author       = {The {Stacks Project Authors}},
	title        = {\textit{Stacks Project}},
	howpublished = {\url{https://stacks.math.columbia.edu}},
	year         = {2018},
}

@article {Yeakel2019IET,
	AUTHOR = {Yeakel, Sarah},
	TITLE = {An isovariant {E}lmendorf's theorem},
	JOURNAL = {Doc. Math.},
	FJOURNAL = {Documenta Mathematica},
	VOLUME = {27},
	YEAR = {2022},
	PAGES = {613--628},
	ISSN = {1431-0635,1431-0643},
	MRCLASS = {55P91},
	MRNUMBER = {4432523},
}

@article {MH,
	AUTHOR = {Murphy-Hernandez, Frank},
	TITLE = {The category of {G}-posets},
	JOURNAL = {Algebra Universalis},
	FJOURNAL = {Algebra Universalis},
	VOLUME = {81},
	YEAR = {2020},
	NUMBER = {3},
	PAGES = {Paper No. 38, 21},
	ISSN = {0002-5240,1420-8911},
	MRCLASS = {06A11 (16B50)},
	MRNUMBER = {4120403},
	DOI = {10.1007/s00012-020-00669-3},
	URL = {https://doi.org/10.1007/s00012-020-00669-3},
	keywords = {primary},
}

@book {gabriel2012calculus,
	AUTHOR = {Gabriel, P. and Zisman, M.},
	TITLE = {Calculus of fractions and homotopy theory},
	SERIES = {Ergebnisse der Mathematik und ihrer Grenzgebiete [Results in
	Mathematics and Related Areas]},
	VOLUME = {Band 35},
	PUBLISHER = {Springer-Verlag New York, Inc., New York},
	YEAR = {1967},
	PAGES = {x+168},
	MRCLASS = {55.40 (18.00)},
	MRNUMBER = {210125},
	MRREVIEWER = {A.\ K.\ Bousfield},
	keywords = {primary}
}

@book {leinster2014basic,
	AUTHOR = {Leinster, Tom},
	TITLE = {Basic category theory},
	SERIES = {Cambridge Studies in Advanced Mathematics},
	VOLUME = {143},
	PUBLISHER = {Cambridge University Press, Cambridge},
	YEAR = {2014},
	PAGES = {viii+183},
	ISBN = {978-1-107-04424-1},
	MRCLASS = {18-01},
	MRNUMBER = {3307165},
	MRREVIEWER = {Philippe\ Gaucher},
	DOI = {10.1017/CBO9781107360068},
	URL = {https://doi.org/10.1017/CBO9781107360068},
}

@book {riehl_2014,
	AUTHOR = {Riehl, Emily},
	TITLE = {Categorical homotopy theory},
	SERIES = {New Mathematical Monographs},
	VOLUME = {24},
	PUBLISHER = {Cambridge University Press, Cambridge},
	YEAR = {2014},
	PAGES = {xviii+352},
	ISBN = {978-1-107-04845-4},
	MRCLASS = {18G55 (18D20 55U35)},
	MRNUMBER = {3221774},
	MRREVIEWER = {David\ A.\ Blanc},
	DOI = {10.1017/CBO9781107261457},
	URL = {https://doi.org/10.1017/CBO9781107261457},
}
	
\end{document}